\crefname{subsection}{subsection}{subsections}
\newcommand{\Z}{\mathbb{Z}}
\newcommand{\gp}[2]{\langle #1 \, | \, #2 \rangle}
\newcommand{\sgp}[1]{\langle #1 \rangle}
\DeclareMathOperator{\Ker}{Ker}
\def\coloneqq{\mathrel{\mathop\mathchar"303A}\mkern-1.2mu=}
\begin{document}

\title{Droms Theorems for twisted right-angled Artin groups}
\author{Simone Blumer, Islam Foniqi, Claudio Quadrelli}

\date{}
\maketitle

\theoremstyle{plain}
\newtheorem{theorem}{Theorem}[section]
\newtheorem*{theorem*}{Theorem}
\newtheorem{conj}[theorem]{Conjecture}
\newtheorem{notation}[theorem]{Notation}
\newtheorem*{note}{Note}
\theoremstyle{definition}

\setlength{\parindent}{0em}
\setlength{\parskip}{0.5em} 
\author{}

\newaliascnt{conjecture}{theorem}
\newtheorem{conjecture}[conjecture]{Conjecture}
\aliascntresetthe{conjecture}
\providecommand*{\conjectureautorefname}{Conjecture}

\newaliascnt{lemma}{theorem}
\newtheorem{lemma}[lemma]{Lemma}
\aliascntresetthe{lemma}
\providecommand*{\lemmaautorefname}{Lemma}

\newaliascnt{proposition}{theorem}
\newtheorem{proposition}[proposition]{Proposition}
\aliascntresetthe{proposition}
\providecommand*{\propositionautorefname}{Proposition}

\newaliascnt{cor}{theorem}
\newtheorem{cor}[cor]{Corollary}
\aliascntresetthe{cor}
\providecommand*{\corautorefname}{Corollary}

\newaliascnt{claim}{theorem}
\newtheorem{claim}[claim]{Claim}

\newaliascnt{notation}{theorem}
\aliascntresetthe{notation}
\providecommand*{\notationautorefname}{Notation}

\aliascntresetthe{claim}
\providecommand*{\claimautorefname}{Claim}

\newaliascnt{remark}{theorem}
\newtheorem{remark}[remark]{Remark}
\aliascntresetthe{remark}
\providecommand*{\remarkautorefname}{Remark}

\newtheorem*{claim*}{Claim}
\theoremstyle{definition}

\newaliascnt{definition}{theorem}
\newtheorem{definition}[definition]{Definition}
\aliascntresetthe{definition}
\providecommand*{\definitionautorefname}{Definition}

\newaliascnt{example}{theorem}
\newtheorem{example}[example]{Example}
\aliascntresetthe{example}
\providecommand*{\exampleautorefname}{Example}

\newaliascnt{question}{theorem}
\newtheorem{question}[question]{Question}
\aliascntresetthe{question}
\providecommand*{\questionautorefname}{Question}

\begin{abstract}
We characterize twisted right-angled Artin groups whose finitely generated subgroups are also twisted right-angled Artin groups. Additionally, we give a classification of coherence within this class of groups in terms of the defining graph. Furthermore, we provide a solution to the isomorphism problem for a notable subclass of these groups.
\end{abstract}

\renewcommand{\thefootnote}{\fnsymbol{footnote}} 

\thefootnote{
\noindent
\emph{MSC 2020 classification:} 20E26,
20F36, 20F65. \newline
\noindent
\emph{Key words:} twisted right-angled Artin groups, mixed graph, Droms, coherence, rigid}

\footnotetext{The second-named author acknowledges support from the EPSRC Fellowship grant EP/V032003/1 ‘Algorithmic, topological and geometric aspects of infinite groups, monoids and inverse semigroups’.
The third-named author acknowledges his membership to the national group GNSAGA of the Italian National Institute of Advanced Mathematics "F. Severi".
}


\section{Introduction}\label{Introduction}

In geometric group theory, the class of \emph{right-angled Artin groups} (abbreviated as \emph{RAAGs}) plays a significant role; see \citep{charney2007introduction} for a comprehensive survey. RAAGs, also known as \emph{graph groups}, can be represented by simplicial graphs, where each RAAG is associated with a specific simplicial graph that encodes its defining relations.
A generalisation of these, known as \emph{twisted right-angled Artin groups} (abbreviated as \emph{T-RAAGs}), first appeared in \citep{pride1986tits}, and later in \citep{clancy2010homology} -  where the concept of a \emph{twisted Artin group} was introduced. We also refer to T-RAAGs as \emph{twisted graph groups}.
In spite of the rather elementary presentation (see below), T-RAAGs yield surprising variety and flexibility: they span from free groups to free abelian and meta-abelian groups and, unlike RAAGs, they may admit non-trivial torsion.
They have been the subject of several recent works, see, e.g., \citep*{foniqi2022,foniqi2024twisted, foniqi2024subgroup}, \citep*{antolin2025traag}, \citep*{himeno2024twisted}.

T-RAAGs are defined by presentations consisting of generators and relations: there are finitely many generators, and for each pair of distinct generators~$a, b$, there is at most one relation between them. This relation is either a commutation~$ab = ba$ or a so-called \emph{Klein relation}~$aba = b$. Such presentations can be conveniently encoded using \emph{mixed graphs}, where vertices correspond to the generators, and edges represent the relations. An undirected edge between~$a$ and~$b$ indicates the commutation relation~$ab = ba$, while a directed edge from~$a$ to~$b$ specifies the Klein relation~$aba = b$; if two generators~$a$ and~$b$ are not connected by an edge, no relation is imposed between them (for the formal definition of a mixed graph see~\Cref{ssec:mixed graphs}).
If~$\Gamma$ is a mixed graph, the associated T-RAAG is denoted by~$T(\Gamma)$.

Two basic and significant examples of T-RAAGs are the fundamental groups of the torus and the Klein bottle: the torus group is~$\Z^2 = \gp{a, b}{ab = ba}$, while the Klein bottle group is~$K = \gp{a, b}{aba = b}$.
The associated mixed graphs are, respectively,
\begin{figure}[H]
\centering
\begin{tikzpicture}[>={Straight Barb[length=7pt,width=6pt]},thick]
\draw[] (-0.5, 0) node[left] {$\mathrm{P}_2 =~$};
\draw[fill=black] (0,0) circle (1.5pt) node[left] {$a$};
\draw[fill=black] (2,0) circle (1.5pt) node[right] {$b\,\,,$};

\draw[] (5.5, 0) node[left] {$\Xi_2 =~$};
\draw[fill=black] (6,0) circle (1.5pt) node[left] {$a$};
\draw[fill=black] (8,0) circle (1.5pt) node[right] {$b\,\,,$};

\draw[thick] (0.1,0) -- (1.9,0);
\draw[thick, ->] (6.1,0) -- (7.9,0);
\end{tikzpicture}
\end{figure}
so that~$\Z^2=T(\mathrm{P}_2)$ and~$K=T(\Xi_2)$.
It is important to note that~$K$ is not a RAAG, as RAAGs are bi-orderable~\citep*{duchamp1992lower}, whereas~$K$ is not.

In the late '80s, Carl Droms obtained several remarkable results on the structure of RAAGs.
The aim of this work is to prove the "twisted analogue" of some of Droms' results.

One of the most renowned of such results is the theorem which characterises those RAAGs whose finitely generated subgroups are again RAAGs.
These RAAGs are precisely those whose associated simplicial graph does not contain as induced subgraph any of the following:
\begin{figure}[H]
\centering
\begin{tikzpicture}[>={Straight Barb[length=7pt,width=6pt]},thick, scale =0.75]
\draw[] (-6.5, 0) node[left] {$\mathrm{P}_4 =~$};
\draw[fill=black] (-6,0) circle (1pt) node[above] {$a$};
\draw[fill=black] (-5,0) circle (1pt) node[above] {$b$};
\draw[fill=black] (-4,0) circle (1pt) node[above] {$c$};
\draw[fill=black] (-3,0) circle (1pt) node[above] {$d$};
\draw[] (-3, 0) node[right] {$\,\,\, ,$};

\draw[thick] (-5.9,0) -- (-5.1,0);
\draw[thick] (-4.9,0) -- (-4.1,0);
\draw[thick] (-3.9,0) -- (-3.1,0);

\draw[] (1, 0) node[left] {$\mathrm{C}_4 =~$};
\draw[fill=black] (2,1) circle (1pt) node[left] {$a$};
\draw[fill=black] (4,1) circle (1pt) node[right] {$b$};
\draw[fill=black] (4,-1) circle (1pt) node[right] {$c$};
\draw[fill=black] (2,-1) circle (1pt) node[left] {$d$};
\draw[] (4.5, 0) node[right] {$,$};

\draw[thick] (2.1, 1) -- (3.9, 1);
\draw[thick] (4,0.9) -- (4,-0.9);
\draw[thick] (2.1,-1) -- (3.9,-1);
\draw[thick] (2,-0.9) -- (2,0.9);
\end{tikzpicture}\label{fig:P4C4}
\end{figure}
see \citep{droms1987subgroups}.
Simplicial graphs satisfying this property are called \emph{Droms (simplicial) graphs}. It is worth remarking that these graphs, and their associated groups~$L = A(P_4)$ and~$A(C_4) \cong F_2 \times F_2$, are sometimes referred to as \emph{poisonous}, as they frequently serve as obstructions to certain group-theoretic properties.

In order to state the "twisted analogue" of the aforementioned result, we need the notion of \emph{special mixed graph}.
Roughly speaking, a mixed graph~$\Gamma$ is \emph{special} if the following holds: if a vertex~$b$ is the tip of a directed edge, then every edge joining the vertex~$b$ to another vertex is a directed edge "pointing" at~$b$.
In special mixed graphs, we call terminal vertices of directed edges \emph{sinkholes}.
For example, consider the mixed graphs
\begin{figure}[H]
\centering
\begin{tikzpicture}[>={Straight Barb[length=7pt,width=6pt]},thick, scale =0.75]

\draw[] (-5.8, 0) node[left] {$\Gamma_1 =~$};
\draw[fill=black] (-5,1) circle (1pt) node[left] {$a$};
\draw[fill=black] (-3,1) circle (1pt) node[right] {$b$};
\draw[fill=black] (-3,-1) circle (1pt) node[right] {$c$};
\draw[fill=black] (-5,-1) circle (1pt) node[left] {$d$};
\draw[] (-2.5, 0) node[right] {$,$};

\draw[thick, ->] (-4.9, 1) -- (-3.1, 1);
\draw[thick, ->] (-3,-0.9) -- (-3,0.9);
\draw[thick] (-4.9,-0.9) -- (-3.1,0.9);
\draw[thick, ->] (-5,-0.9) -- (-5,0.9);

\draw[] (1.3, 0) node[left] {$\Gamma_2 =~$};
\draw[fill=black] (2,1) circle (1pt) node[left] {$a'$};
\draw[fill=black] (4,1) circle (1pt) node[right] {$b'$};
\draw[fill=black] (4,-1) circle (1pt) node[right] {$c'$};
\draw[fill=black] (2,-1) circle (1pt) node[left] {$d'$};
\draw[] (4.5, 0) node[right] {$.$};

\draw[thick, ->] (2.1, 1) -- (3.9, 1);
\draw[thick, ->] (4,-0.9) -- (4,0.9);
\draw[thick] (2,-0.9) -- (2,0.9);
\draw[thick, ->] (2.1,-0.9) -- (3.9,0.9);
\end{tikzpicture}
\end{figure}
Then~$\Gamma_1$ is not special; while~$\Gamma_2$ is special, and the vertex~$b$ is its only sinkhole.
We define the "mixed analogue" of Droms simplicial graphs as follows.

\begin{definition}\label{def:droms mixed graphs}
A mixed graph~$\Gamma$ is said to be a \emph{Droms mixed graph} if the following hold:
 \begin{itemize}
  \item[{\rm (i)}] the underlying simplicial graph~$\overline{\Gamma}$ -- i.e., the simplicial graph obtained "forgetting" the direction of directed edges of~$\Gamma$ -- is a Droms simplicial graph;
  \item[{\rm (ii)}]~$\Gamma$ is special;
  \item[{\rm (iii)}]~$\Gamma$ does not contain the special mixed graph
  \begin{figure}[H]
\centering
\begin{tikzpicture}[>={Straight Barb[length=7pt,width=6pt]},thick]
\draw[] (-0.5, 0) node[left] {$\Lambda_{\mathrm s} =~$};
\draw[fill=black] (0,0) circle (1.5pt) node[left] {$a_1$};
\draw[fill=black] (2,0) circle (1.5pt) node[above] {$b$};
\draw[fill=black] (4,0) circle (1.5pt) node[right] {$a_2$};

\draw[thick, ->] (0.1,0) -- (1.9,0);
\draw[thick, ->] (3.9,0) -- (2.1,0);
\end{tikzpicture}
\end{figure}
  as an induced subgraph.
 \end{itemize}
\end{definition}

For example, consider the two mixed graphs
\begin{figure}[H]
\centering
\begin{tikzpicture}[>={Straight Barb[length=7pt,width=6pt]},thick, scale =0.75]

\draw[] (-5.8, 0) node[left] {$\Gamma_3 =~$};
\draw[fill=black] (-5,1) circle (1pt) node[left] {$a_1$};
\draw[fill=black] (-3,1) circle (1pt) node[right] {$b_1$};
\draw[fill=black] (-3,-1) circle (1pt) node[right] {$a_2$};
\draw[fill=black] (-5,-1) circle (1pt) node[left] {$b_2$};
\draw[] (-2.5, 0) node[right] {$,$};

\draw[thick, ->] (-3.1, 1) -- (-4.9, 1);
\draw[thick, ->] (-3,0.9) -- (-3,-0.9);
\draw[thick] (-4.9,-0.9) -- (-3.1,0.9);
\draw[thick, ->] (-5,-0.9) -- (-5,0.9);
\draw[thick, ->] (-4.9,-1) -- (-3.1,-1);

\draw[] (1.3, 0) node[left] {$\Gamma_4 =~$};
\draw[fill=black] (2,1) circle (1pt) node[left] {$a_1'$};
\draw[fill=black] (4,1) circle (1pt) node[right] {$b'$};
\draw[fill=black] (4,-1) circle (1pt) node[right] {$a_2'$};
\draw[fill=black] (2,-1) circle (1pt) node[left] {$a_3'$};
\draw[] (4.5, 0) node[right] {$.$};

\draw[thick, ->] (2.1, 1) -- (3.9, 1);
\draw[thick, ->] (4,-0.9) -- (4,0.9);
\draw[thick] (2,-0.9) -- (2,0.9);
\draw[thick, ->] (2.1,-0.9) -- (3.9,0.9);
\draw[thick] (2.1,-1) -- (3.9,-1);
\end{tikzpicture}
\end{figure}
They are both special, and do satisfy condition~(i) of~\autoref{def:droms mixed graphs}, but condition~(iii) is satisfied only by~$\Gamma_3$, so this is a Droms mixed graph, while~$\Gamma_4$ is not.

As it happpens for RAAGs, Droms mixed graphs characterise those T-RAAGs whose finitely generated subgroups are all T-RAAGs.

\begin{theorem}\label{thm:subgroups intro}
Let~$\Gamma$ be a mixed graph, and~$G = T(\Gamma)$ the associated T-RAAG. The following are equivalent:
 \begin{itemize}
  \item[{\rm (i)}] every finitely generated subgroup of~$G$ is again a T-RAAG;
  \item[{\rm (ii)}]~$\Gamma$ is a Droms mixed graph.
 \end{itemize}
\end{theorem}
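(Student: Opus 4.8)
The plan is to prove the two implications separately. For $(ii)\Rightarrow(i)$ I would argue by induction on the number of \emph{sinkholes} of $\Gamma$, using the following reading of \Cref{def:droms mixed graphs}: since $\Gamma$ is special, its vertices split into the sinkholes $b_1,\dots,b_r$ and the non-sinkholes, the latter carrying a simplicial graph $\Gamma_0$; each $b_j$ is joined only by directed edges incoming from a set $S_j\subseteq V(\Gamma_0)$; condition $(iii)$ forces each $S_j$ to be a clique of $\Gamma_0$, while $(i)$ says $\overline\Gamma$ (hence $\Gamma_0$) is a Droms simplicial graph. Consequently $T(\Gamma)$ is the fundamental group of a ``star'' graph of groups with central vertex group the RAAG $A(\Gamma_0)$, leaf groups $A(S_j)\rtimes_{-1}\Z\cong\Z^{|S_j|}\rtimes_{-1}\Z$ (the new generator $b_j$ inverting the clique parabolic $A(S_j)$), and edge groups the clique parabolics $A(S_j)$. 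When $r=0$ the group $T(\Gamma)=A(\Gamma_0)$ is a Droms RAAG and the claim is exactly Droms' subgroup theorem, which I take as known.

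For the inductive step I peel off one sinkhole, writing $T(\Gamma)=G'\ast_{A(S_r)}P_r$ with $G'=T(\Gamma\setminus b_r)$ a Droms mixed T-RAAG with one fewer sinkhole (so all its finitely generated subgroups are T-RAAGs by induction), $P_r=A(S_r)\rtimes_{-1}\Z$, and $A(S_r)\cong\Z^{|S_r|}$ a clique parabolic of $G'$. Given a finitely generated $H\le T(\Gamma)$, I make $H$ act on the Bass--Serre tree of this amalgam; the goal is to present $H$ as a \emph{finite} graph of groups whose vertex groups are finitely generated subgroups of conjugates of $G'$ (hence T-RAAGs by induction) or of $P_r$ (hence T-RAAGs: a finitely generated subgroup of $\Z^k\rtimes_{-1}\Z$ is of the form $\Z^j$ or $\Z^j\rtimes_{-1}\Z$, each realised by a Droms mixed graph consisting of a clique, possibly with one sinkhole over it), and whose edge groups are finitely generated, hence free abelian, subgroups of conjugates of $A(S_r)$. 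It then remains to reassemble such a graph of groups into a single T-RAAG: the point to exploit is that, after the induction, these free-abelian edge groups occur as clique parabolics of the incident vertex T-RAAGs, so one can glue the defining mixed graphs along the corresponding cliques, and the resulting mixed graph stays special/Droms because the only new adjacencies introduced are undirected edges among non-sinkholes or incoming edges at an old sinkhole.

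For $(i)\Rightarrow(ii)$ I argue by contraposition: if $\Gamma$ is not a Droms mixed graph then one of the three conditions of \Cref{def:droms mixed graphs} fails, and in each case I produce a finitely generated $H\le T(\Gamma)$ that is not a T-RAAG, using that every induced sub-mixed-graph of $\Gamma$ gives a (retract) subgroup of $T(\Gamma)$, that T-RAAGs are finitely presented, and the elementary identity $T(\Delta)^{\mathrm{ab}}\cong\Z^{n-s}\oplus(\Z/2)^{s}$, where $n=|V(\Delta)|$ and $s$ is the number of vertices of $\Delta$ that emit a directed edge. If $\overline\Gamma$ fails to be Droms simplicial there is an induced $\mathrm P_4$ or $\mathrm C_4$ in $\overline\Gamma$; a short case check shows that, since $\Gamma$ is special with no induced $\Lambda_{\mathrm s}$ (otherwise we are in one of the next two cases), the corresponding induced sub-mixed-graph carries no directed edge, so it is literally $\mathrm P_4$ or $\mathrm C_4$ and $T(\Gamma)$ contains $A(\mathrm P_4)=L$ or $A(\mathrm C_4)\cong F_2\times F_2$, which carry finitely generated subgroups that are not finitely presented (Stallings--Bieri). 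If $\Gamma$ is not special, there is a tip $b$ of a directed edge carrying a further edge not pointing into $b$, and the induced mixed graph on the three relevant vertices is one of finitely many ``mismatched $\mathrm P_3$'' configurations -- e.g.\ $\langle a,b,c\mid aba=b,\ bc=cb\rangle$, which is isomorphic to a free-by-cyclic group $F_2\rtimes_\theta\Z$; inside such a group one finds a finitely generated subgroup isomorphic to $\langle x,t\mid [x,t^2]=1\rangle$ (the mapping torus of the swap of $F_2$), whose abelianisation is $\Z^2$ with no torsion, so by the displayed identity it could only be $F_2$ or $\Z^2$, which it is not -- hence it is not a T-RAAG. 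If $\Gamma$ is special but contains an induced $\Lambda_{\mathrm s}$, then $T(\Gamma)\supseteq T(\Lambda_{\mathrm s})\cong F_2\rtimes_\theta\Z$ with $\theta$ inverting both free generators, and the same $\langle x,t\mid [x,t^2]=1\rangle$ appears inside it: take $x=a_1$, $t=a_2b$, so that $t^2=b^2$ and conjugation by $t$ swaps $a_1$ with $a_2a_1^{-1}a_2^{-1}$, whence $\langle a_1,t\rangle\cong\langle x,t\mid[x,t^2]=1\rangle$, again not a T-RAAG.

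The main obstacle is the reassembly step in the inductive proof of $(ii)\Rightarrow(i)$. Two things must be controlled there: first, that the Bass--Serre decomposition of a finitely generated $H\le G'\ast_{A(S_r)}P_r$ can be taken to be a finite graph of \emph{finitely generated} pieces, which requires that intersections of finitely generated subgroups with conjugates of a clique parabolic in a Droms mixed T-RAAG are finitely generated and well-behaved; and, more seriously, that a finite graph of T-RAAGs amalgamated along such free-abelian subgroups is again a T-RAAG. The latter is delicate precisely because being a T-RAAG is much weaker than having all finitely generated subgroups be T-RAAGs -- for instance $K\times\Z$ is itself a T-RAAG (it is $T$ of the non-special graph on $\{a\to b,\ a-c,\ b-c\}$), yet it contains finitely generated subgroups of the form $\Z^2\rtimes\Z$, with order-two monodromy not conjugate in $\mathrm{GL}_2(\Z)$ to a diagonal matrix, that are not T-RAAGs -- so the argument must genuinely exclude such pathologies, which is exactly the point at which Droms' original RAAG argument is most delicate and where one must verify that the twisted leaf pieces $\Z^k\rtimes_{-1}\Z$ introduce no new difficulty.
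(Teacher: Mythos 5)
Your argument has genuine gaps in both directions. For (ii)$\Rightarrow$(i), the star-of-groups decomposition of $T(\Gamma)$ over the clique parabolics $A(S_j)$ is correct, but the proof then rests entirely on the reassembly step that you yourself identify as the main obstacle and do not carry out: neither the finiteness and structure of the Bass--Serre decomposition of $H$ nor the claim that a finite graph of T-RAAGs glued along free-abelian clique parabolics is again a T-RAAG is established, and the latter is exactly where the difficulty lies. The paper avoids amalgams altogether: by \autoref{prop: droms elementary type} a connected Droms mixed graph is a cone $\nabla_\theta(\Gamma_0)$ whose tip $w$ is a \emph{positive} vertex joined to every other vertex, so $\langle w\rangle\cong\Z$ is \emph{normal} and $T(\Gamma)=\langle w\rangle\rtimes_\theta T(\Gamma_0)$. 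For a finitely generated $H$ one then only needs to split $1\to H\cap\langle w\rangle\to H\to\pi(H)\to1$: by induction $\pi(H)=T(\Lambda)$ is a T-RAAG, a section is constructed by lifting each vertex generator $v_i$ to $v_iw^{n_i}$ and checking that this respects the commutation and Klein relations (this uses that the restricted signature is again a signature, which is why \autoref{prop:droms subgroups} carries the signature through the induction), and $H\cong(H\cap\langle w\rangle)\rtimes T(\Lambda)$ is the T-RAAG of a cone over $\Lambda$. So the induction runs on vertices via cones and disjoint unions, not on sinkholes via amalgams, and no reassembly problem ever arises.

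For (i)$\Rightarrow$(ii) there are two concrete errors. First, $A(\mathrm P_4)$ is \emph{coherent}: $\mathrm P_4$ has no cycles, hence is chordal, so by Droms' coherence theorem (and the paper's own \autoref{thm: coherence}) every finitely generated subgroup of $A(\mathrm P_4)$ is finitely presented; the Stallings--Bieri obstruction applies only to $A(\mathrm C_4)\cong F_2\times F_2$. For $\mathrm P_4$ one must instead use Droms' explicit finitely presented index-two subgroup $H=\langle a,b,c,d\mid ab=ba,\ bc=cb,\ bc^2d=dbc^2\rangle$, rule out directed edges via $H^{\mathrm{ab}}\cong\Z^4$, and invoke Droms' theorem that $H$ is not a RAAG. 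Second, your treatment of non-special graphs covers only the line configurations \eqref{eq:triangle open proof}; several of the forbidden induced subgraphs are triangles, see \eqref{eq:triangle torsion proof}--\eqref{eq:triangle notorsion proof}, whose T-RAAGs are virtually abelian (e.g.\ $T(\Delta)=\langle x,y,z\mid xy=yx,\ yz=zy,\ zxz=x\rangle$ is virtually $\Z^3$) and hence contain no copy of $\langle x,t\mid[x,t^2]=1\rangle$, which contains $F_2$; a different witness is needed there (the paper uses $H=\langle x,yz\rangle$, which contains $\Z^3$ but whose abelianisation forces a two-vertex defining graph). Your $\Lambda_{\mathrm s}$ computation ($x=a_1$, $t=a_2b$, $t^2=b^2$, $t$ swapping $a_1$ and $a_2a_1^{-1}a_2^{-1}$) is correct, and the corresponding elements for the line graph do exist (e.g.\ $x=c$, $t=ab$ in $\langle a,b,c\mid aba=b,\ bc=cb\rangle$), but they are not supplied in your sketch.
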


For example,~$\Z^2$, the Klein bottle group~$K$, and~$T(\Gamma_3)$ satisfy the conditions of~\autoref{thm:subgroups intro}, while the T-RAAGs associated to the mixed graphs~$\Gamma_1,\Gamma_2,\Gamma_4$ displayed above, do not.

It is worth underlining that every T-RAAG associated to a Droms mixed graph may be constructed recursively starting from free groups and performing free products and certain semi-direct products with free abelian normal factor (see \autoref{prop: droms elementary type}), in analogy with RAAGs associated to Droms simplicial graphs.
For example,~$T(\Gamma_3)\cong \Z^2\rtimes F_2$, where~$F_2$ is the free group generated by~$a_1,a_2$.

One may relax condition~(i) of~\autoref{thm:subgroups intro} and ask which T-RAAGs have all finitely generated subgroups that are also finitely presented, specifically identifying which T-RAAGs are \emph{coherent}.
In \citep{droms1987graph}, Droms proved that a RAAG is coherent if, and only if, the associated simplicial graph is \emph{chordal}, i.e., it does not contain cycles of length greater than 3 as induced subgraph.
We prove that the same occurs for T-RAAGs.

\begin{theorem}\label{thm:coherence chordal intro}
  Let~$G = T(\Gamma)$ be a T-RAAG associated with the mixed graph~$\Gamma$.
 Then~$G$ is coherent if, and only if, the underlying simplicial graph~$\overline{\Gamma}$ is chordal.
\end{theorem}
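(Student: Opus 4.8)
The plan is to prove the two implications separately, reducing each to Droms' coherence theorem for ordinary right-angled Artin groups, together with the structure of $T(\Gamma)$ when $\overline\Gamma$ is a complete graph.

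\textbf{If $\overline\Gamma$ is chordal, then $T(\Gamma)$ is coherent.} I argue by induction on $|V(\Gamma)|$. When $\overline\Gamma$ is complete, $T(\Gamma)$ is virtually abelian: the subgroup $A$ generated by the squares of the standard generators is abelian and normal -- the Klein relation $uvu=v$ gives $vu^2v^{-1}=u^{-2}$, hence $[u^2,v^2]=1$, and a similar computation shows that every standard generator carries $A$ into itself -- while $T(\Gamma)/A$ is a quotient of $(\Z/2)^{V(\Gamma)}$ and so finite; being virtually abelian, $T(\Gamma)$ is Noetherian and therefore coherent. When $\overline\Gamma$ is chordal but not complete, Dirac's theorem provides a simplicial vertex $v$ with $\st(v)\neq\overline\Gamma$, yielding $\overline\Gamma=\overline{\Gamma\smallsetminus v}\cup\overline{\st(v)}$ with intersection the complete graph $\lk(v)$; on the level of T-RAAGs this reads
\[
  T(\Gamma)\;=\;T(\Gamma\smallsetminus v)\,*_{T(\lk(v))}\,T(\st(v)).
\]
Here $T(\st(v))$ and $T(\lk(v))$ are virtually abelian (their defining graphs are complete), and $T(\Gamma\smallsetminus v)$ is coherent by the inductive hypothesis, since an induced subgraph of a chordal graph is chordal. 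It then remains to invoke the preservation of coherence under amalgamated products (and HNN extensions) over virtually polycyclic edge groups, which gives that $T(\Gamma)$ is coherent.

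\textbf{If $\overline\Gamma$ is not chordal, then $T(\Gamma)$ is incoherent.} In that case $\overline\Gamma$ contains an induced cycle $C_n$ with $n\geq 4$; let $\Gamma_0$ be the full sub-mixed-graph of $\Gamma$ on those vertices, so that $\overline{\Gamma_0}=C_n$ and $T(\Gamma_0)$ embeds in $T(\Gamma)$. Since coherence passes to subgroups, it is enough to show $T(\Gamma_0)$ is incoherent, and for this I would embed the untwisted RAAG $A(C_n)$ into $T(\Gamma_0)$ by \emph{squaring the tips}: writing $S'$ for the set of vertices of $\Gamma_0$ which are the tip of a directed edge, send the standard generator at a vertex $v$ to $v$ if $v\notin S'$ and to $v^2$ if $v\in S'$. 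A Klein relation $uvu=v$ (the directed edge $u\to v$) forces $v^2$ to commute with $u$, hence also with $u^2$, so this assignment respects every defining relation of $A(C_n)$ and defines a homomorphism $A(C_n)\to T(\Gamma_0)$ whose image lies in the finite-index subgroup $\ker\bigl(T(\Gamma_0)\to(\Z/2)^{S'}\bigr)$; a normal-form argument for elements of a T-RAAG (or an analysis of the action on the Bass--Serre tree of $T(\Gamma_0)$ obtained by deleting a vertex of $C_n$, whose link is a pair of non-adjacent vertices) shows the homomorphism is injective. Since $A(C_n)$ is incoherent for every $n\geq 4$ -- for $n=4$ because $A(C_4)\cong F_2\times F_2$ is Stallings' incoherent group, and for $n\geq 5$ by the Bestvina--Brady/Morse-theoretic argument, the flag complex of $C_n$ not being simply connected -- the subgroup $A(C_n)\leq T(\Gamma_0)\leq T(\Gamma)$ is incoherent, and hence so is $T(\Gamma)$.

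\textbf{Where the difficulty lies.} The work is concentrated in two places. On the "chordal" side one must make precise the coherence-inheritance statement for fundamental groups of finite graphs of groups with coherent vertex groups and slender (e.g.\ virtually polycyclic) edge groups, and verify that our edge groups $T(\lk(v))$ satisfy its hypotheses; I expect this to be either citable or provable directly for these very concrete edge groups. On the "non-chordal" side the delicate point is the injectivity of the squaring-the-tips homomorphism $A(C_n)\hookrightarrow T(\Gamma_0)$ for a cycle carrying an arbitrary pattern of directed and undirected edges: verifying the relators only shows the map is well defined, and proving injectivity requires a faithful normal form for elements of a T-RAAG. A possible shortcut, if the preceding sections supply it, is to replace this step by the assertion that $T(\Gamma_0)$ is virtually a right-angled Artin group on a graph that is again non-chordal, after which Droms' theorem and the fact that coherence is inherited by finite-index overgroups conclude the argument.
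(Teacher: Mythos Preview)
Your approach is essentially the paper's: chordal decomposition over complete (hence virtually abelian, hence Noetherian) amalgamating subgroups for one direction, and an embedded incoherent RAAG for the other. The paper resolves both of your flagged difficulties cleanly --- the amalgamated-product step is exactly the Karrass--Solitar theorem (coherent vertex groups, edge group with all subgroups finitely generated), and for the non-chordal direction it simply embeds the \emph{entire} $A(\overline\Gamma)$ into $T(\Gamma)$ via $v\mapsto v^{2}$ for every vertex (\autoref{proposition: subgraph injectivity}(ii)), bypassing your partial-squaring map on an induced cycle.
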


Finally, we focus on isomorphisms of T-RAAGs.
In \citep{droms1987isomorphisms}, Droms proved that simplicial graphs are \emph{rigid}: namely, if two RAAGs are isomorphic, then the two associated simplicial graphs are equal.
This phenomenon does not occur for T-RAAGs.
For example, it is easy to see that the two mixed graphs
\begin{figure}[H]
\centering
\begin{tikzpicture}[>={Straight Barb[length=7pt,width=6pt]},thick, scale =0.75]

\draw[] (-5.8, 0.5) node[left] {$\Lambda_1 =~$};
\draw[fill=black] (-5,1) circle (1pt) node[left] {$a_1$};
\draw[fill=black] (-3,1) circle (1pt) node[right] {$a_2$};
\draw[fill=black] (-4,0) circle (1pt) node[below] {$b$};
\draw[] (-2.8, 0) node[right] {$,$};

\draw[thick, ->] (-4.1, 0.1) -- (-4.9, 0.9);
\draw[thick, ->] (-3.9,0.1) -- (-3.1,0.9);

\draw[] (1.3, 0.5) node[left] {$\Lambda_2 =~$};
\draw[fill=black] (2,1) circle (1pt) node[left] {$a_1'$};
\draw[fill=black] (4,1) circle (1pt) node[right] {$a'_2$};
\draw[fill=black] (3,0) circle (1pt) node[below] {$b'$};
\draw[] (4.2, 0) node[right] {$,$};

\draw[thick, ->] (2.9, 0.1) -- (2.1, 0.9);
\draw[thick] (3.1,0.1) -- (3.9,0.9);
\end{tikzpicture}
\end{figure}
define T-RAAGs which are both isomorphic to the semidirect product~$\Z\rtimes F_2$, where~$F_2$ is a free group of rank~$2$ (generated by~$a_1,a_2$, and by~$a_1',a_2'$ respectively), and the action of~$F_2$ on the normal factor is given by a non-trivial homomorphism~$F_2\to\Z^\times$.
Thus,~$\Lambda_1,\Lambda_2$ are not rigid.

To study rigidity of special mixed graphs we introduce the notion of \emph{satellite}: if~$\Gamma$ is a special mixed graph, and~$a$ is a sinkhole of~$\Gamma$, a satellite of~$a$ is a vertex~$b$ with distance 2 to~$a$ such that every vertex of~$\Gamma$ which is joined to~$b$, is joined also to~$a$.
For example, in the mixed graph~$\Lambda_1$ above the two sinkholes~$a_1,a_2$ are satellite of each other, while in the mixed graph~$\Lambda_2$ the vertex~$a_2'$ is the satellite of the sinkhole~$a_1'$.

The presence of satellites is an obstruction to rigidity.
Altogether, we prove the following.

\begin{theorem}\label{thm:isomorphism}
 Let~$T(\Gamma)$ be a T-RAAG, with associated mixed graph~$\Gamma$. Assuming that~$\Gamma$ is a special mixed graph, one has:
 \begin{itemize}
  \item[{\rm (i)}] If~$\Gamma$ is rigid then it has no satellites.
  \item[{\rm (ii)}] If~$\Gamma$ has a sinkhole which is joined to every other vertex, then~$\Gamma$ is rigid.
  \item[{\rm (iii)}] In case~$\Gamma$ is a Droms mixed graph,~$\Gamma$ is rigid if, and only if, it has not satellites. \end{itemize}
\end{theorem}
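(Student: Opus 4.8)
The plan is to prove the three items in turn, with (iii) resting on (i) and (ii). The recurring ingredients are: the local structure of a special mixed graph at a sinkhole — sinkholes form an independent set, the in-neighbours of a sinkhole span only undirected edges, and in a Droms mixed graph they span a clique, since $\Lambda_{\mathrm s}$ is excluded; the identification of a full subgroup $\gen{S}\le T(\Gamma)$ on a vertex set $S$ with $T$ of the induced subgraph; the behaviour of abelianisation, where a directed edge $u\to v$ forces $2u=0$, so that $T(\Gamma)^{\mathrm{ab}}$ is a free abelian group of rank equal to the number of vertices that are the source of no directed edge, plus a $\Z/2$ for each vertex that is a source; and the fact that, for a mixed graph $\Delta$ and \emph{any} partition of its vertices into $C$ and $D$, the map inverting the generators in $D$ and fixing those in $C$ is an automorphism of $T(\Delta)$ (each commutator and each Klein relation is preserved, by a short check, essentially because $xyx=y$ is equivalent to $x^{\pm1}y^{\pm1}x^{\pm1}=y^{\pm1}$ for any choice of signs). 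Droms' theorems for RAAGs (\citep{droms1987subgroups, droms1987isomorphisms}) will be invoked whenever a subgroup in play is a RAAG, and the recursive description of Droms mixed graphs recalled in \Cref{prop: droms elementary type} will drive the induction in (iii).

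For (i) I would give an explicit \emph{satellite move}. Let $a$ be a sinkhole with a satellite $b$, and set $S=N(b)$; then $S\neq\emptyset$, $S\subseteq N(a)$, and $a\notin S$, $b\notin N(a)$. By speciality $b$ cannot be the source of an edge into any $s\in S$ (that would make $s$ a sinkhole adjacent to the sinkhole $a$), so either $b$ is a sinkhole with $s\to b$ for every $s\in S$, or $b$ commutes with every $s\in S$; and each $s\in S\subseteq N(a)$ satisfies $s\to a$, i.e.\ $a$ inverts $s$. Let $\Gamma'$ be obtained from $\Gamma$ by replacing every edge between $b$ and $S$ by the other of these two types. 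One checks that $\Gamma'$ is again special, that $\overline{\Gamma'}=\overline{\Gamma}$, that $\Gamma'\neq\Gamma$, and — using that $N(a)$ is a clique in a Droms graph, so no new $\Lambda_{\mathrm s}$ appears — that $\Gamma'$ is Droms whenever $\Gamma$ is. For the isomorphism: since $a$ inverts every $s\in S$ in both $T(\Gamma)$ and $T(\Gamma')$, the assignment $x\mapsto x$ for $x\neq b$ together with $b\mapsto a^{-1}b'$ sends each defining relation of $\Gamma$ to a valid identity in $T(\Gamma')$ — the only relations involving $b$ are with elements of $S$, and these carry over precisely because $a$ and $b$ (resp.\ $b'$) act in complementary ways on $S$ — so it defines a homomorphism $T(\Gamma)\to T(\Gamma')$, with inverse $x\mapsto x$, $b'\mapsto ab$. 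Hence $T(\Gamma)\cong T(\Gamma')$, $\Gamma$ is not rigid, and the contrapositive is (i).

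For (ii): if the sinkhole $a$ is joined to every other vertex, then $v\to a$ for all $v\neq a$, so $a$ inverts every generator and, by speciality, $\Delta:=\Gamma-a$ has no directed edges; thus $T(\Gamma)\cong A(\Delta)\rtimes_\iota\Z$ with $\iota$ inverting all generators, $\Z=\gen{a}$ infinite cyclic, and $A(\Delta)$ the kernel of the retraction $T(\Gamma)\to\Z$ killing every $v\neq a$. Suppose $T(\Gamma)\cong T(\Gamma')$ for a special mixed graph $\Gamma'$. Comparing abelianisations gives $T(\Gamma')^{\mathrm{ab}}\cong\Z\oplus(\Z/2)^{|\Delta|}$, so $\Gamma'$ has exactly one vertex $a'$ that is not a source; since sinkholes are non-sources and every source points to a sinkhole, $a'$ is the unique sinkhole of $\Gamma'$ and is joined to every other vertex. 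Hence $T(\Gamma')\cong A(\Delta')\rtimes_\iota\Z$ with $\Delta'=\Gamma'-a'$ simplicial, and both $A(\Delta)$ and $A(\Delta')$ are recovered as the kernels of the canonical projections of the groups onto the free parts of their abelianisations. Therefore $A(\Delta)\cong A(\Delta')$, so $\Delta=\Delta'$ by Droms' rigidity for RAAGs, and $\Gamma=\Gamma'$.

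Finally, for (iii): the forward implication is (i), whose construction keeps $\Gamma$ Droms. For the converse, assume $\Gamma$ is Droms with no satellite, and induct on $|V(\Gamma)|$. A single vertex is rigid. If $\overline\Gamma$ is disconnected, $T(\Gamma)$ is the free product of the T-RAAGs of the components, a decomposition that is canonical by the Kurosh subgroup theorem once one checks that a connected-graph T-RAAG is freely indecomposable; since "Droms'' and "no satellite'' pass to components and satellites live inside components, the inductive hypothesis closes this case. If $\overline\Gamma$ is connected it has a vertex $v$ joined to all others, and in $\Gamma$ either $v$ is a dominating sinkhole — then (ii) applies — or $v$ is not a sinkhole, hence commutes with a set $C$ and inverts a set $D$ of sinkholes, $V\setminus\{v\}=C\sqcup D$, and $T(\Gamma)\cong T(\Gamma-v)\rtimes_\sigma\Z$ with $\sigma$ fixing $C$ and inverting $D$. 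One recovers $T(\Gamma-v)$ and, by induction, the graph $\Gamma-v$, recovers the $\Z$-quotient canonically via the abelianisation as in (ii), and is left to reconstruct the partition $C\sqcup D$ from the outer-conjugacy class of $\sigma$. This is the step I expect to be the crux: one must show that two such sign-change automorphisms of a Droms T-RAAG are outer-conjugate exactly when the corresponding cone-completions differ by a chain of satellite moves, so that with no satellites the partition, hence $\Gamma$, is determined; the degenerate case $D=\emptyset$ (so $T(\Gamma)\cong\Z\times T(\Gamma-v)$) needs in addition the standard fact that the $\Z$ direct factor at a commuting cone vertex is recoverable, parallel to Droms' argument for RAAGs.
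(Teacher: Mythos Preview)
Parts (i) and (ii) of your proposal follow the paper's Propositions~\ref{prop:iso 1} and~\ref{prop:iso 2}, with one real omission in (ii): rigidity is tested against \emph{arbitrary} mixed graphs $\Gamma'$, and you assume without argument that $\Gamma'$ is special. The paper supplies this as Lemma~\ref{lemma:special TRAAG} (resting on Proposition~\ref{proposition:special}); without it, your abelianisation count does not rule out non-special candidates for $\Gamma'$ --- a directed path on $n$ vertices, for instance, has the same abelianisation as your $\Gamma$, and your claim ``every source points to a sinkhole'' holds only in special graphs.

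Part~(iii) has a genuine gap. First a structural slip: when $v$ is a positive cone tip it is $\langle v\rangle\cong\Z$ that is \emph{normal} in $T(\Gamma)$, with $T(\Gamma-v)$ acting on it via the signature; so your decomposition $T(\Gamma-v)\rtimes_\sigma\Z$ is the wrong way round, and there is no ``$\Z$-quotient to recover via the abelianisation as in~(ii)''. More seriously, you correctly isolate the crux --- determining the cone signature from the abstract group --- but do not prove it, and peeling off a single tip gives no canonical subgroup to work with. The paper's key device is Lemma~\ref{lemma:normal abelian subgroup}: in a connected non-complete Droms T-RAAG the maximal abelian normal subgroup $N$ is \emph{unique} and is generated by \emph{all} iterated positive cone tips at once, so that $\Gamma=\nabla_{\theta_0}^r(\Gamma_0)$ with $T(\Gamma)/N\cong T(\Gamma_0)$ and $\Gamma_0$ no longer a cone. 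Applying the same lemma to $\Gamma'$ (which is Droms by \autoref{thm:subgroups intro}) gives $\Gamma'=\nabla_{\theta_0'}^r(\Gamma_0')$ and $T(\Gamma_0')\cong T(\Gamma_0)$, so by induction $\Gamma_0\cong\Gamma_0'$; then $\Gamma\not\cong\Gamma'$ forces $\theta_0(u)\neq\theta_0'(u)$ at some vertex $u$, necessarily \emph{isolated} in $\Gamma_0$ by Remark~\ref{rem:signature graph}, and a short case analysis turns this isolated vertex into a satellite of $\Gamma$. Stripping all cone tips simultaneously is precisely what localises the ambiguity to isolated vertices of $\Gamma_0$ and hence to satellites; your one-at-a-time approach does not yield a canonical invariant and leaves the crux unproved.
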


For example, the Droms mixed graph~$\Gamma_3$ displayed above is not rigid by~\autoref{thm:isomorphism}--(iii), while the special mixed graphs~$\Gamma_2$ and~$\Gamma_4$ are rigid by~\autoref{thm:isomorphism}--(ii).
We suspect that the lack of satellites is also a sufficient condition to rigidity for special mixed graphs.

\begin{conj}
 Let~$\Gamma$ be a special mixed graph.
If~$\Gamma$ has no satellites, then it is rigid.
\end{conj}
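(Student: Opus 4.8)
The aim is to recover the special mixed graph $\Gamma$, up to isomorphism of mixed graphs, from the isomorphism type of $G=T(\Gamma)$, using the no-satellite hypothesis. I would argue by induction on $|V(\Gamma)|$, reconstructing in turn (a) the partition of $V(\Gamma)$ into \emph{neutral} vertices, \emph{sources} and \emph{sinkholes}, (b) the undirected edges, and (c) the directed edges. A first, cheap invariant is the abelianisation: an undirected relation abelianises trivially, whereas a directed relation $aba=b$ gives $2a=0$ at the source $a$, so $G^{\mathrm{ab}}\cong\Z^{\,|V|-s}\oplus(\Z/2)^{s}$, where $s$ is the number of sources; hence $|V(\Gamma)|$ and $s$ are already isomorphism invariants.

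Next come the structural reductions, mirroring Droms' treatment of RAAGs. If $\Gamma$ is disconnected, $G$ is the free product of the T-RAAGs of its connected components, each again special and satellite-free; granting that $T(\Gamma)$ is freely indecomposable exactly when $\Gamma$ is connected with more than one vertex (which should follow from a Stallings-ends analysis of the visual amalgam/HNN splittings of $G$) together with essential uniqueness of the Kurosh decomposition, the connected case yields the general one. For connected $\Gamma$ one then splits off the \emph{undirected-join part}: the set of vertices joined to every other vertex by an \emph{undirected} edge generates a canonical direct factor, and, using a uniqueness statement for the resulting direct-product decomposition of the sort available for RAAGs, one reduces to the case with no such vertex. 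The case of a sinkhole joined to every other vertex is \autoref{thm:isomorphism}(ii), and the Droms case is \autoref{thm:isomorphism}(iii), so the real content is the connected, non-join, non-Droms case; one might also try to import Droms' rigidity of simplicial graphs via a canonical finite-index RAAG subgroup of $G$, when one exists.

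In that remaining case I would recover the sinkhole structure internally. For a directed edge $x\to a$, the sinkhole $a$ normalises $\langle x\rangle\cong\Z$ and acts on it by inversion (since $xax=a$ forces $a^{-1}xa=x^{-1}$), with a prescribed centraliser governed by the link of $x$; I would aim to characterise such cyclic subgroups and their normalisers intrinsically — e.g.\ via the poset of parabolic subgroups and their centralisers, exactly as one reconstructs $\overline{\Gamma}$ and its standard generators in the RAAG case — so that the set of sources, the set of sinkholes, the undirected edges, and finally the directed edges are each recovered as canonical data. The \emph{no-satellite} hypothesis is used precisely here: a satellite $b$ of a sinkhole $a$ is exactly a vertex into which the sinkhole role could be transplanted without changing the group (this is what makes $\Lambda_1$, with its mutually satellite sinkholes $a_1,a_2$, non-rigid), so ruling satellites out is what makes the reconstruction unambiguous; and \autoref{thm:isomorphism}(i) shows the hypothesis is necessary.

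The step I expect to be the main obstacle is making this centraliser/parabolic analysis go through \emph{without} the Droms hypothesis, i.e.\ when $\overline{\Gamma}$ contains $\mathrm{P}_4$ or $\mathrm{C}_4$, or $\Gamma$ contains $\Lambda_{\mathrm s}$. There one loses the recursive description of \autoref{prop: droms elementary type} (iterated free products and free-abelian-by-previous semidirect products) on which \autoref{thm:isomorphism}(iii) rests, so $\overline{\Gamma}$ and the directed edges must be pinned down simultaneously from the internal structure of $G$ — the parabolic poset, or the combinatorics of maximal flats in the action of $G$ on its natural non-positively curved square complex. Showing that this structure is a strong enough isomorphism invariant, and that the only residual ambiguity is the one killed by the no-satellite condition, is likely to demand either a torsion-aware extension of the RAAG-rigidity machinery (resonance varieties / cohomology-ring arguments accommodating the $\Z/2$ contributed by sources) or a genuine geometric rigidity theorem for these square-complex actions; the interaction between the inversion action at sinkholes and the $\mathrm{P}_4/\mathrm{C}_4$ obstructions is where I would expect the argument to be hardest.
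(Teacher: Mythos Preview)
This statement is a \emph{conjecture} in the paper, not a theorem: the authors explicitly write ``We suspect that the lack of satellites is also a sufficient condition to rigidity for special mixed graphs'' and leave it open. There is therefore no proof in the paper to compare your proposal against. What the paper does establish is the partial evidence you already cite: necessity of the no-satellite condition (\autoref{prop:iso 1}), the case of a universal sinkhole (\autoref{prop:iso 2}), and the full equivalence for Droms mixed graphs.

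Your proposal is, appropriately, not a proof but a strategy sketch, and you are candid about this. The reductions you suggest (abelianisation to read off $|V|$ and the number of sources; Kurosh to reduce to connected $\Gamma$; peeling off an undirected-join factor) are sound and consistent with the paper's methods. You correctly locate the crux: once $\Gamma$ is connected, not a cone with universal sinkhole, and not Droms, the recursive machinery of \autoref{prop: droms elementary type} and \autoref{lemma:normal abelian subgroup} is unavailable, and one must reconstruct the directed-edge data from intrinsic invariants of $T(\Gamma)$. Your two candidate toolkits --- a cohomological/resonance approach adapted to the $2$-torsion from sources, or a geometric rigidity argument for the associated square complex --- are plausible avenues, but neither is developed here, and the paper offers no hint that the authors know how to carry either through.

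In short: there is no gap to name, because you do not claim a proof; and there is no divergence from the paper's argument, because the paper has none. Your outline is a reasonable research plan for attacking an open problem, and your identification of the non-Droms connected case as the hard part matches exactly where the paper's techniques run out.
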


The article is structured as follows.
\Cref{Definitions_preliminaries_and_notation} introduces the notation and main definitions used throughout this article regarding mixed graphs. In \Cref{sec:TRAAGs} we define T-RAAGs and some of their properties. Next, in \Cref{sec: Coherence in T-RAAGs}, we characterize T-RAAGs that are coherent, in the spirit of \citep{droms1987graph}. \Cref{sec: When Subgroups Fail to Be T-RAAGs} serves to show that T-RAAGs defined by graphs that are not Droms mixed graphs, contain finitely generated subgroups that are not T-RAAGs. On the other hand, \Cref{sec: Elementary T-RAAGs} shows that the finitely generated subgroups of T-RAAGs build over Droms mixed graphs, are T-RAAGs themselves as well. Finally, in \Cref{sec: Rigidity of T-RAAGs} we provide a characterization of the isomorphism problem for T-RAAGs over Droms mixed graphs.

\section{Mixed Graphs
}\label{Definitions_preliminaries_and_notation}

This section establishes the notation and foundational concepts used throughout the article. 

Recall that a simplicial graph is a pair~$\overline{\Gamma}=(V, E)$,
where~$V$ is a set whose elements are called vertices, and~$E\subseteq \{\{x, y\}\:\mid\: x, y \in V, x \neq y\}$ is a set of paired distinct
vertices, whose elements are called edges.

\subsection{Definition of Mixed Graphs}\label{ssec:mixed graphs}

A \emph{mixed graph}~$\Gamma$ consists of an underlying simplicial graph~$\overline{\Gamma}=(V, E)$, a set of \emph{directed edges}~$D\subseteq E$,
and two maps~$o, t\colon D\to V$. We denote it as~$\Gamma = (V, E, D, o, t)$.
For a directed edge~$\mathbf{e}=\{x, y\}\in D$, the maps~$o, t$ satisfy~$o(\mathbf{e}), t(\mathbf{e}) \in\mathbf{e}$, and~$o(\mathbf{e})\neq t(\mathbf{e})$: we call these vertices respectively the \emph{origin} and the \emph{terminus} of~$\mathbf{e}$.
Moreover, if~$\mathbf{e}=\{x,y\}\in E$ is an undirected edge, i.e.~$\{x,y\}\notin D$, we denote it as~$[x,y]=[y,x]$ and we draw it as a "plain" edge, as in the mixed graph~$\mathrm{P}_2$; while if~$\{x,y\} \in D$, we denote it as~$\mathbf{e}=[o(\mathbf{e}), t(\mathbf{e})\rangle$, and we draw it as an arrow pointing at the terminus, as in the mixed graph~$\Xi_2$ (observe that in this case, either~$\mathbf{e}= [x,y\rangle$ or~$\mathbf{e}=[y,x\rangle$.

Induced subgraphs are defined in the obvious way. Moreover, we will use the following notions.

\begin{definition}\label{def:signature}\rm
Let~$\Gamma = (V, E, D, o, t)$ be a mixed graph.
\begin{itemize}
  \item[(a)] A vertex~$v \in V$ is called \emph{negative} if there exists a directed edge~$[u, v\rangle \in D$ for some~$u \in V$.
  \item[(b)] A vertex~$v \in V$ is called a \emph{sinkhole} if, for every vertex~$u \in V$ with~$\{u, v\} \in E$, it holds that~$[u, v\rangle \in D$.
  \item[(c)] A \emph{signature} of~$\Gamma$ is a map~$\theta\colon V\to\Z^\times$ satisfying the following:~$\theta(x)=-1$ for every negative vertex; and~$\theta(y)=1$ if~$\{y,z\}\in E$ for some~$z\in V$,~$z\neq y$, and~$y$ is not the terminus of any directed edge.
\end{itemize}
\end{definition}

\begin{remark}\label{rem:signature graph}
Let~$\Gamma=(V,E,D,o,t)$ be a mixed graph, and~$\theta\colon V\to\Z^\times$ a signature.
If~$x\in V$ is not an isolated vertex, then the value~$\theta(x)$ is completely determined by the incidence structure of~$\Gamma$; if~$x\in V$ is isolated -- i.e.,~$\{x,y\}\notin E$ for any vertex~$y\in V$ --, instead,~$\theta(x)$ may be arbitrarily chosen.
Therefore, a mixed graph may be endowed with a unique signature if, and only if, it has no isolated vertices.
\end{remark}

\subsection{Special mixed graphs}\label{ssec:special graphs}

We say that a mixed graph~$\Gamma=(V,E,D,o,t)$  is a \emph{special mixed graph} if the following condition is satisfied: if~$x$ is a negative vertex, and~$\{x,y\}\in E$, then~$[y,x\rangle\in D$.
In other words, every negative vertex is a sinkhole.

It is straightforward to verify that a mixed graph is special if and only if it does not contain any of the following mixed graphs on three vertices as induced subgraphs:

The three triangle-graphs:
\begin{equation}\label{eq:triangle torsion proof}
  \xymatrix@R=1.5pt{x&&y \\ \bullet\ar[ddr] && \bullet\ar@/_1pc/[ll] \\ \\ &\bullet\ar[ruu]& \\ &z&}
  \qquad\qquad
  \xymatrix@R=1.5pt{x&&y \\ \bullet && \bullet\ar@/_1pc/[ll] \\ \\ &\circ\ar[uur]\ar[luu]& \\ &z&}
  \qquad\qquad
  \xymatrix@R=1.5pt{x&&y \\ \bullet && \bullet\ar@/_1pc/[ll] \\ \\&\circ\ar[ruu]\ar@{-}[uul]&\\ &z&}
\end{equation}

The two triangle-graphs:
\begin{equation}\label{eq:triangle notorsion proof}
  \xymatrix@R=1.5pt{ x && y \\ \bullet && \circ\ar@{-}@/_1pc/[ll] \\ \\
  &\circ\ar[luu]\ar@{-}[ruu]& \\&z&}
  \qquad\qquad\qquad
  \xymatrix@R=1.5pt{ x && y \\ \bullet && \bullet\ar@{-}@/_1pc/[ll] \\ \\ &\circ\ar[uur]\ar[luu]& \\&z&}
\end{equation}

The two line-graphs:
\begin{equation}\label{eq:triangle open proof}
  \xymatrix@R=1.5pt{ z&x&y \\  \circ\ar[r] &\bullet& \circ\ar@{-}[l] }
  \qquad\qquad\qquad
  \xymatrix@R=1.5pt{z&x&y \\  \circ\ar[r] &\bullet\ar[r]& \bullet }
\end{equation}

\begin{example}\label{ex:complete special}\rm
A complete mixed graph -- i.e., a mixed graph whose underlying simplicial graph is complete -- is special if, and only if, it has at most one negative vertex, which is a sinkhole.
\end{example}

Within the family of special mixed graphs we define the following subfamily.

\begin{definition}\label{def:droms}
 Let~$\Gamma = (V, E, D, o, t)$ be a mixed graph. We say that~$\Gamma$ is a \emph{Droms mixed graph} if it satisfies the following three conditions:
 \begin{itemize}
  \item[(i)]~$\Gamma$ is special;
  \item[(ii)] the underlying simplicial graph~$\overline{\Gamma}=(V,E)$ is a Droms simplicial graph -- i.e.,~$\mathrm{C}_4$ and~$\mathrm{P}_4$ are not induced subgraphs of~$\overline{\Gamma}$;
  \item[(iii)]~$\Gamma$ does not contain the mixed graph~$\Lambda_s$ as an induced subgraph.
 \end{itemize}
\end{definition}

\begin{remark}\label{T-stable subgraphs}
Induced subgraphs of Droms mixed graphs are again Droms mixed graphs.
\end{remark}

\begin{example}\label{ex:complete Droms}\rm
A complete special mixed graph is a Droms mixed graph. We underline the straightforward fact that every induced subgraph of a complete special graph is again a complete special graph.
\end{example}

In the next subsection we characterise Droms mixed graphs constructively, in analogy with Droms simplicial graphs \citep{droms1987subgroups}.

\subsection{Droms mixed graphs}\label{ssec:droms graphs}

The disjoint union of two mixed graphs is defined in the most natural way: namely, given two mixed graphs
$$\Gamma_1 = (V_1, E_1, D_1, o_1, t_1)\qquad\text{and}\qquad\Gamma_2 = (V_2, E_2, D_2, o_2, t_2),$$ the \emph{disjoint union} of~$\Gamma_1$ and~$\Gamma_2$ is the mixed graph~$\Gamma_1\sqcup\Gamma_2 = (V, E, D, o, t)$
where~$V=V_1\sqcup V_2$,~$E=E_1\sqcup E_2$,~$D=D_1\sqcup D_2$, and~$o,t\colon D\to V$ extend~$o_1,o_2,t_1,t_2$ in the obvious way.

For mixed graphs one has also the following construction.

\begin{definition}\label{def:cone}\rm
Let~$\Gamma = (V, E, D, o, t)$ be a mixed graph, and let~$\theta\colon V \to \Z^\times$ be an orientation.
The \emph{cone}~$\nabla_\theta(\Gamma)$ of~$\Gamma$ induced by the orientation~$\theta$ is the mixed graph
\[
  \nabla(\Gamma) = \left( \tilde V,\; \tilde E,\; \tilde D,\; \tilde o,\; \tilde t \right),
\]
which contains~$\Gamma$ as an induced subgraph, defined as follows:
\begin{itemize}
  \item[(i)]~$\tilde V = V \sqcup \{w\}$, i.e., we adjoin a new vertex -- the \emph{(positive) tip of the cone};
  \item[(ii)] the set of edges is
  \[
    \tilde E = E \sqcup \left\{ \{v, w\} \mid v \in V \right\},
  \]
  meaning the tip~$w$ is joined by an edge to every vertex of~$\Gamma$.
  \item[(iii)] the set of directed edges is
  \[
    \tilde D = D \sqcup \left\{ [w, v\rangle \mid \text{$v \in V$ is negative} \right\},
  \]
  and the maps~$\tilde o, \tilde t \colon \tilde E \to \tilde V$ extend~$o$ and~$t$ accordingly.
\end{itemize}
Also, we extend the signature~$\theta\colon V\to\Z^\times$ to a signature~$\tilde\theta\colon \tilde V\to\Z^\times$ of~$\tilde\Gamma$ by putting~$\tilde\theta(w)=1$.
\end{definition}

\begin{example}\label{ex:nospecial nocone}\rm
The mixed graphs in \eqref{eq:triangle torsion proof}--\eqref{eq:triangle open proof} are not cones.
\end{example}

\begin{example}\label{ex:Lambda special}\rm
The mixed graph~$\Lambda_{\mathrm s}$ given by
\[
 \xymatrix@R=1.5pt{ \circ\ar[ddr] && \circ\ar[ddl] \\ v_1&&v_2\\ &\bullet& \\&w&}
\]
is special — the only negative vertex is~$w$, which is a sinkhole — but it is not a cone.
\end{example}

\begin{example}\label{ex:P_4 - C_4}\rm
If~$\Gamma$ is a mixed graph such that~$\overline{\Gamma} = \mathrm{P}_4$ or~$\overline{\Gamma} = \mathrm{C}_4$, then~$\Gamma$ cannot be a cone.
\end{example}

\begin{example}\label{ex:cone complete}\rm
Let~$\Gamma=(V,E,D,o,t)$ be a complete special mixed graph with positive vertices~$v_1,\ldots,v_d$,~$d\geq1$, and let~$\theta\colon V\to\Z^\times$ the only signature of~$\Gamma$, that is,~$\theta(v_i)=1$ for all~$i=1,\ldots,d$, and ~$\theta(w)=-1$ for the only sinkhole~$w$, if there is one.
For every~$i=1,\ldots,d$, let~$\Gamma_i$ be the induced subgraph of~$\Gamma$ with vertex set~$V_i=V\smallsetminus\{v_i\}$.
Then
\[
 \Gamma=\nabla_{\theta\vert_{V_1}}(\Gamma_1)=\ldots=\nabla_{\theta\vert_{V_d}}(\Gamma_d),
\]
where the tip of each cone is, respectively,~$v_1,\ldots,v_d$.
\end{example}

Droms mixed graphs, defined in the previous subsection, may be built recursively as shown by the following, see also \cite*[Prop.~2.14]{blumer2024oriented}.

\begin{proposition}\label{prop: droms elementary type}
A mixed graph~$\Gamma = (V, E, D, o, t)$ is a Droms mixed graph if, and only if, the following holds: for every induced subgraph~$\Lambda$ of~$\Gamma$, either
\begin{itemize}
  \item[(a)]~$\Lambda$ is the disjoint union of two non-empty proper subgraphs; or
  \item[(b)]~$\Lambda$ is the cone~$\nabla_{\theta_0}(\Lambda_0)$ for some induced subgraph~$\Lambda_0 = (V_0, E_0, D_0, o\vert_{E_0}, t\vert_{E_0})$ and some signature~$\theta_0\colon V_0\to\Z^\times$ of~$\Lambda_0$.
\end{itemize}
\end{proposition}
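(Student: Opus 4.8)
The plan is to prove both implications by induction on $|V|$, using the recursive structure of Droms simplicial graphs (which are exactly those built from single vertices by disjoint unions and cones, i.e.\ joins with a single vertex) as the template, and carefully tracking the extra data of directions via the specialness condition and the prohibition of $\Lambda_{\mathrm s}$.

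\emph{The "if" direction.} Suppose every induced subgraph $\Lambda$ of $\Gamma$ is either disconnected (case (a)) or a cone (case (b)). Taking $\Lambda = \Gamma$ itself, we get that $\Gamma$ is a disjoint union or a cone; in either case $\overline\Gamma$ is a disjoint union or a join with a vertex, and since the same recursive alternative passes to all induced subgraphs, $\overline\Gamma$ is a Droms simplicial graph by Droms' constructive characterisation \citep{droms1987subgroups} — this gives condition (ii) of \autoref{def:droms}. For condition (i), specialness, I would argue that a mixed graph that is a disjoint union of two special mixed graphs is special, and that a cone $\nabla_\theta(\Lambda_0)$ with $\Lambda_0$ special is special: indeed the only new edges in the cone emanate from the tip $w$, which is positive, and they point toward $v$ precisely when $v$ is already negative in $\Lambda_0$, so no new violation of the three-vertex forbidden configurations in \eqref{eq:triangle torsion proof}--\eqref{eq:triangle open proof} is created; an induction then shows $\Gamma$ is special. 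For condition (iii), I would observe that $\Lambda_{\mathrm s}$ is neither disconnected nor a cone (this is \autoref{ex:Lambda special}), so if $\Gamma$ contained $\Lambda_{\mathrm s}$ as an induced subgraph, that subgraph would be an induced $\Lambda$ violating the hypothesis.

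\emph{The "only if" direction.} Suppose $\Gamma$ is a Droms mixed graph; by \autoref{T-stable subgraphs} every induced subgraph $\Lambda$ is again a Droms mixed graph, so it suffices to show that $\Gamma$ itself is a disjoint union of two non-empty proper subgraphs or a cone. Since $\overline\Gamma$ is a Droms simplicial graph, by Droms' result $\overline\Gamma$ is either disconnected — in which case $\Gamma$ is a disjoint union and we are in case (a) — or it is a join $\overline\Gamma = \{w\} * \overline{\Gamma_0}$ for some vertex $w$ joined to every other vertex. In the latter case I want to show $\Gamma = \nabla_\theta(\Gamma_0)$ where $\Gamma_0$ is the induced subgraph on $V \smallsetminus \{w\}$ and $\theta$ is a signature of $\Gamma_0$. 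The key point is to choose $w$ correctly: I must pick the cone point to be a \emph{positive} vertex and to ensure the directions of the edges at $w$ match the cone recipe, i.e.\ $[w,v\rangle \in D$ exactly when $v$ is negative in $\Gamma_0$, and never $[v,w\rangle$. This is where specialness and the exclusion of $\Lambda_{\mathrm s}$ do the work. If $w$ were negative, then $w$ is a sinkhole (by specialness), but $w$ is joined to everything, so \emph{every} edge at $w$ points into $w$; one then has to rule this out or, rather, find another vertex to cone over — and here I would use that $\overline\Gamma$ being a Droms graph which is a join forces enough structure (if $\overline\Gamma$ is a complete graph use \autoref{ex:cone complete}; otherwise the non-neighbours among $\Gamma_0$ give room to locate a positive universal vertex). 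If $w$ is positive and $[v,w\rangle \in D$ for some $v$, that makes $w$ negative, contradiction; so all directed edges at $w$ point outward. Finally, if $v \in V\smallsetminus\{w\}$ is negative but $[w,v\rangle \notin D$, i.e.\ $[w,v]$ is undirected, then taking $u$ with $[u,v\rangle \in D$ (exists since $v$ is negative) and noting $u$ is joined to $w$, the triple $\{u, w, v\}$ — with $u \to v$, $w - v$ undirected, and $u$–$w$ some edge — is one of the forbidden special configurations unless $u - w$ is also directed into $w$, which would make $w$ negative; so $[w,v\rangle \in D$ automatically, and conversely if $[w,v\rangle \in D$ then $v$ is negative by definition. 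The remaining worry is the signature: $\theta$ on $\Gamma_0$ is forced on all non-isolated vertices by \autoref{rem:signature graph}, and $\Gamma$ having $w$ universal means $\Gamma_0$ has no isolated vertices unless $|V| = 2$, which is handled directly; so a valid signature exists and $\Gamma = \nabla_\theta(\Gamma_0)$, case (b).

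\emph{Main obstacle.} I expect the crux to be the "only if" direction's choice of cone point: Droms' simplicial theorem only hands us \emph{some} universal vertex $w$ in $\overline\Gamma$, and that vertex might be a sinkhole, which cannot serve as the (positive) tip of a cone in the sense of \autoref{def:cone}. I will need to show that whenever $\overline\Gamma$ is a non-discrete Droms simplicial graph and $\Gamma$ is a special mixed graph avoiding $\Lambda_{\mathrm s}$, there is a universal vertex that is positive — and then verify that the directions at that vertex are exactly as the cone construction prescribes. Bookkeeping the forbidden three-vertex configurations of \eqref{eq:triangle torsion proof}--\eqref{eq:triangle open proof} together with $\Lambda_{\mathrm s}$ at this vertex is the technical heart of the argument; everything else is a routine induction mirroring the simplicial case.
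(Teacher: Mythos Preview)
Your approach is essentially the paper's, and the skeleton is sound. Two places need tightening.

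First, the claim that ``$\Gamma_0$ has no isolated vertices unless $|V|=2$'' is false: if $\Gamma$ is a star with centre $w$ and $n-1$ leaves, then $\Gamma_0$ consists of $n-1$ isolated vertices. This does not actually damage the argument, because at isolated vertices a signature is unconstrained (\autoref{rem:signature graph}); one simply \emph{defines} $\theta_0(v)=-1$ when $[w,v\rangle\in D$ and $\theta_0(v)=1$ otherwise, and then checks this is a valid signature on the non-isolated vertices of $\Gamma_0$. That last check is immediate from specialness alone (if $v$ is negative in $\Gamma_0$ it is a sinkhole in $\Gamma$, so $[w,v\rangle\in D$; if $v$ has an undirected edge in $\Gamma_0$ it cannot be a sinkhole, so $[w,v\rangle\notin D$), without the three-vertex bookkeeping you sketch.

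Second, your handling of the case where the universal vertex $w$ is a sinkhole is vaguer than necessary. The paper's move is clean: if $w$ is a sinkhole and some $v_1,v_2\in V\smallsetminus\{w\}$ are non-adjacent, then the induced subgraph on $\{v_1,v_2,w\}$ is exactly $\Lambda_{\mathrm s}$, a contradiction. Hence $\Gamma$ is complete special, and any of its positive vertices (there is at least one, as a complete special graph has at most one sinkhole) serves as the tip via \autoref{ex:cone complete}. Your phrase ``the non-neighbours among $\Gamma_0$ give room to locate a positive universal vertex'' points at this but states it backwards: you do not locate a new universal vertex in the non-complete case, you rule that case out entirely.
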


\begin{proof}
We first show that any Droms mixed graph~$\Gamma=(V, E, D, o, t)$ satisfies conditions~(a)--(b), using induction on the number of vertices.
It is straightforward to verify that any mixed graph with at most two vertices is a Droms mixed graph and satisfies both conditions.

Assume now that~$\Gamma$ has at least three vertices.
Since the underlying simplicial graph~$\bar\Gamma$ is a Droms simplicial graph, by \cite[Lemma, p.~520]{droms1987subgroups} either~$\Gamma$ is disconnected -- in which case it satisfies condition~(a) -- or there exists a vertex~$c \in V$ adjacent to every other vertex, that is,~$\{c,v\} \in E$ for all~$v \in V \smallsetminus \{c\}$.

If there is such a vertex~$c$, we claim that either~$\Gamma$ is a complete special mixed graph (cf.~\autoref{ex:complete special}); or it is the cone~$\nabla_{\theta_0}(\Gamma_0)$, with tip~$c$,~$\Gamma_0$ the induced subgraph on the vertex set~$V_0 = V \smallsetminus \{c\}$, and~$\theta_0$ a signature of~$\Gamma_0$.

Suppose first that~$[u,c\rangle \in D$ for some~$u \in V$.
Then~$c$ is a sinkhole, since~$\Gamma$ is a Droms mixed graph (and thus, special), and hence~$[v,c\rangle \in D$ for every~$v \in V \smallsetminus \{c\}$.
Furthermore, for any pair of distinct vertices~$v_1, v_2 \in V \smallsetminus \{c\}$, we must have~$\{v_1, v_2\} \in E$, as otherwise the induced subgraph on~$\{v_1, v_2, c\}$ would be isomorphic to the mixed graph~$\Lambda_{\mathrm s}$, which is prohibited by condition~(iii) of~\autoref{def:droms}.
Moreover, if~$[v_1, v_2\rangle \in D$, then~$v_2$ would be a negative vertex that is not a sinkhole, since~$[v_2, c\rangle \in D$, contradicting the assumption that~$\Gamma$ is special.
Hence, in this case,~$\Gamma$ is a complete special mixed graph, which is a cone (cf. \autoref{ex:cone complete}).

On the other hand, if~$\{v,c\}\in E$ and~$[v, c\rangle \notin D$ for every~$v \in V\smallsetminus\{c\}$, consider the induced subgraph~$\Gamma_0 = (V_0, E_0, D_0,o\vert_{D_0},t\vert_{D_0})$ with vertex set~$V_0 = V \smallsetminus \{c\}$.
For every~$v\in V_0$ we set
\[      \theta_0(v) =\begin{cases} 1 &\text{if }[v,c]\in E,\\
-1 &\text{if }[c,v\rangle\in D.                     \end{cases}  \]
We claim that the map~$\theta_0\colon V_0\to\Z^\times$ is a signature, and~$\Gamma=\nabla_{\theta_0}(\Gamma_0)$, with positive tip~$c$.
\begin{itemize}
 \item[(1)] If~$v \in V_0$ is a negative vertex of~$\Gamma_0$, then it is a sinkhole -- as~$\Gamma$ is special, so that~$[c, v\rangle \in D$.
Hence,~$\theta_0(v)=-1$.
 \item[(2)] If~$[v,c]\in E$ -- and hence~$\theta_0(v)=1$ --, then~$[v',v\rangle\notin D$ for any~$v'\in V_0$,~$v'\neq v$, as otherwise~$v$ would be a negative vertex of~$\Gamma$ -- a special mixed graph -- but not a sinkhole.
\end{itemize}
Altogether,~$\theta_0\colon V_0\to \Z^\times$ is a signature of~$\Gamma_0$, and~$\Gamma=\nabla_{\theta_0}(\Gamma_0)$ with tip~$c$, as claimed.
This completes the proof that a Droms mixed graph satisfies conditions~(a)--(b).

Conversely, suppose that~$\Gamma=(V, E, D, o, t)$ satisfies both conditions~(a)--(b).
\begin{itemize}
 \item[(1)] If~$\Gamma$ contains~$\Lambda_{\mathrm s}$ as an induced subgraph, then~$\Gamma$ does not satisfy the conditions~(a)--(b), as~$\Lambda_{\mathrm s}$ is connected and is not a cone (cf.~\autoref{ex:Lambda special}) — a contradiction.
 \item[(2)] Similarly, if~$\Gamma$ contains an induced subgraph~$\Lambda$ such that~$\bar\Lambda = \mathrm{P}_4$ or~$\bar\Lambda = \mathrm{C}_4$, then~$\Lambda$ is not a cone (cf.~\autoref{ex:P_4 - C_4}), again contradicting conditions~(a)--(b).
\end{itemize}
Finally, suppose that~$\Gamma$ is not special. Then it contains at least one of the mixed graphs with three vertices described in \eqref{eq:triangle torsion proof}--\eqref{eq:triangle open proof} as an induced subgraph; each of these graphs is connected and not a cone, which again contradicts the conditions~(a)--(b).
Therefore,~$\Gamma$ is a Droms mixed graph.
\end{proof}

\begin{example}\label{ex:gamma3 droms}
 The mixed graph~$\Gamma_3$ displayed in the Introduction is a Droms mixed graph. It may be constructed starting from the two mixed graphs consisting of the single vertex~$a_1$ and the single vertex~$a_2$, as displayed here below:
 \begin{figure}[H]
\centering
\begin{tikzpicture}[>={Straight Barb[length=7pt,width=6pt]},thick, scale =0.85]

\draw[] (-0.8, 0) node[left] {$\Gamma_3 =~$};
\draw[fill=black] (0,1) circle (1pt) node[above] {$a_1$};
\draw[fill=black] (3,1) circle (1pt) node[above] {$a_2$};
\draw[fill=black] (1.5,0.2) circle (1pt) node[above] {$b_1$};
\draw[fill=black] (1.5,-1.5) circle (1pt) node[below] {$b_2$};
\draw[] (3.2, 0) node[right] {$,$};

\draw[thick, ->] (1.4, 0.3) -- (0.1, 0.9);
\draw[thick, ->] (1.6,0.3) -- (2.9,0.9);
\draw[thick, ->] (1.4, -1.4) -- (0.2, 0.8);
\draw[thick, ->] (1.6,-1.4) -- (2.8,0.8);
\draw[thick] (1.5,-1.4) -- (1.5,0.1);
\end{tikzpicture}
\end{figure}
namely,~$  \Gamma_3=\nabla_{\tilde\theta}(\nabla_\theta(\{a_1\}\sqcup\{a_2\}))$
where~$a_1,a_2$ are negative vertices, and~$b_1,b_2$ are the two tips.
\end{example}

\begin{example}\label{ex:upsilon droms}
 The mixed graph
 \begin{figure}[H]
\centering
\begin{tikzpicture}[>={Straight Barb[length=7pt,width=6pt]},thick, scale =0.85]

\draw[] (-0.8, 0) node[left] {$\Upsilon =~$};
\draw[fill=black] (0,1) circle (1pt) node[above] {$a_1$};
\draw[fill=black] (4.5,1) circle (1pt) node[above] {$a_2$};
\draw[fill=black] (1.5,0.2) circle (1pt) node[above] {$b_1$};
\draw[fill=black] (3,0.2) circle (1pt) node[above] {$b_2$};
\draw[fill=black] (1.5,-1.5) circle (1pt) node[below] {$c_1$};
\draw[fill=black] (3,-2) circle (1pt) node[below] {$c_2$};
\draw[] (4.7, 0) node[right] {$,$};

\draw[thick, ->] (1.4, 0.3) -- (0.1, 0.9);
\draw[thick] (3.1,0.3) -- (4.4,0.9);
\draw[thick, ->] (1.4, -1.4) -- (0.1, 0.8);
\draw[thick] (1.6,-1.4) -- (4.4,0.9);
\draw[thick] (1.5,-1.4) -- (1.5,0.1);
\draw[thick] (1.6, -1.4) -- (2.9, 0.1);

\draw[thick] (3.1,-1.9) -- (4.4,0.9);
\draw[thick] (3,-1.9) -- (3,0.1);
\draw[thick] (2.9,-1.9) -- (1.6,0.1);
\draw[thick] (2.9,-1.9) -- (1.6,-1.6);
\draw[thick, ->] (2.9,-1.9) -- (0.4,0.6);

\end{tikzpicture}
\end{figure}
is a Droms mixed graph, and it may be obtained starting from the two mixed graphs consisting of the single negative vertex~$a_1$ and the single positive vertex~$a_2$: namely,
$$  \Upsilon=\nabla_{\tilde\theta}\left(\nabla_\theta
\left(\nabla_{\theta_1}(\{a_1\})\sqcup\nabla_{\theta_2}(\{a_2\})\right)\right),$$
with~$\theta_1(a_1)=-1$ and~$\theta_2(a_2)=1$.
\end{example}


\section{Twisted right-angled Artin groups}\label{sec:TRAAGs}


All groups considered will be given by finite presentations of the form~$G = \gp{A}{R}$, where an element~$g \in G$ may be expressed as a word in~$(A \cup A^{-1})^{\ast}$. If~$X$ is a subset of~$G$, then~$\sgp{X}$ denotes the subgroup of~$G$ generated by~$X$.

For a finite simplicial graph~$\overline{\Gamma} = (V, E)$, we denote the associated right-angled Artin group by~$A(\Gamma)$, defined by the presentation:
\[
A(\overline{\Gamma}) \coloneqq \langle\, V \mid uv = vu \text{ whenever } \{u,v\}\in E\,\rangle.
\]

\subsection{Definition and properties of T-RAAGs}
When interpreting group presentations from mixed graphs, we associate~$[a,b]$ with the commutation relation~$ab = ba$, and~$[a,b\rangle$ with the Klein relation~$aba = b$. By slight abuse of notation, we write~$[a,b] = aba^{-1}b^{-1}$ and~$[a,b\rangle = abab^{-1}$.

\begin{definition}\label{def_of_traags}
Let~$\Gamma = (V,E,D,o,t)$ be a mixed graph.
The \emph{twisted right-angled Artin group} (T-RAAG) based on~$\Gamma$ is the group
\[
T(\Gamma) = \langle\: V \:\mid\: ab = ba \text{ if } [a,b] \in E\smallsetminus D, \;\; aba = b \text{ if } [a,b\rangle \in D \:\rangle.
\]
We and refer to~$\Gamma$ as the \emph{defining graph} of~$T(\Gamma)$.
\end{definition}

If~$D = \emptyset$, then~$T(\Gamma)$ reduces to the usual RAAG~$A(\Gamma)$, where we identify the mixed graph~$\Gamma$ (without directed edges) with its underlying simplicial graph~$\overline{\Gamma}$.

\begin{remark}\label{rem: underlying RAAG of a T-RAAG}
Given a mixed graph~$\Gamma = (V, E, D, o, t)$, let~$\overline{\Gamma}$ denote its underlying simplicial graph. The group~$A(\overline{\Gamma})$ is then referred to as the \emph{underlying RAAG} of~$T(\Gamma)$. A key theme is to explore connections between~$T(\Gamma)$ and~$A(\overline{\Gamma})$.
\end{remark}

The following properties of T-RAAGs follow from normal forms in T-RAAGs \citep{foniqi2024twisted,antolin2025traag}.

\begin{proposition}\label{proposition: subgraph injectivity}

Let~$\Gamma=(V,E,D,o,t)$ be a  mixed graph, and consider the associated T-RAAG~$T(\Gamma)$.
\begin{itemize}
 \item[(i)] If~$\Delta=(V_\Delta,E_\Delta,D_\Delta, o\vert_{D_{\Delta}},t\vert_{D_{\Delta}})$ is an induced subgraph of~$\Gamma$, then the morphism
\[
i: T(\Delta) \longrightarrow T(\Gamma) \quad \text{induced by} \quad V_\Delta\hookrightarrow V,
\]
is injective. In particular,~$T(\Delta)$ is a subgroup of~$T(\Gamma)$.
 \item[(ii)]
The map~$q: A(\overline{\Gamma}) \longrightarrow T(\Gamma)$ defined by~$v \mapsto v^2$ is injective.
 \item[(iii)]
Let~$u \in V$, and set~$\Gamma_u$ to be the graph  obtained by~$\Gamma$, by changing the edges~$[v, u\rangle$ to~$[v, u]$. The map~$\varphi_u: T(\Gamma_u) \longrightarrow T(\Gamma)$ defined by~$u \mapsto u^2$, and~$v \mapsto v$ for~$v \neq u$, is injective.
\end{itemize}
\end{proposition}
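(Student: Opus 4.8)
\emph{Proof strategy.} The plan is to deduce all three items from the normal-form / confluent rewriting description of T-RAAGs established in \citep{foniqi2024twisted,antolin2025traag}: each item asserts that a certain substitution on the generators does not enable any new reduction, and that is precisely what this machinery controls. I would prove (iii) first, then obtain (i) from it by an elementary retraction argument, and finally treat (ii) separately. The well-definedness of all the maps involved is a routine check against the defining relations; the one genuinely substantive point — the same in all three parts — is that the substitution creates no new cancellations, and this is the step I expect to be the crux, to be either cited or reproved from the rewriting system.

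For (iii): first I would check that~$\varphi_u$ respects the relations of~$T(\Gamma_u)$. Relations of~$\Gamma_u$ not involving~$u$ are literally relations of~$\Gamma$; an unchanged relation~$[u,w\rangle$ or~$[u,x]$ of~$\Gamma$ gives~$u^2 w u^2 = w$, resp.~$u^2 x = x u^2$, in~$T(\Gamma)$ at once (from~$uwu=w$, resp.~$ux=xu$); and the only converted relations, coming from edges~$[v,u\rangle\in D$, become commutations~$vu=uv$ in~$\Gamma_u$, for which~$vuv=u$ in~$T(\Gamma)$ yields~$v u^2 v^{-1}=u^2$. Hence~$\varphi_u$ is a homomorphism. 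For injectivity I would take a word~$W$ over the generators of~$T(\Gamma_u)$ that is reduced and nontrivial, and show that the word obtained from~$W$ by replacing each syllable~$u^{\pm1}$ by~$u^{\pm2}$ is still reduced in~$T(\Gamma)$ after normalising. The key observation is that turning an edge~$[v,u\rangle$ into~$[v,u]$ introduces no new coincidences between syllables in~$T(\Gamma)$, since~$[v,u\rangle$ already forces~$v$ and~$u^2$ to commute there; so every reduction available to the image word in~$T(\Gamma)$ is the image of one available to~$W$ in~$T(\Gamma_u)$, and the image is therefore nontrivial.

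For (i): it is enough to handle the deletion of a single vertex~$v$ and then iterate over~$V\smallsetminus V_\Delta$. If~$v$ is not the terminus of any directed edge, then~$v\mapsto1$, $w\mapsto w$ for~$w\neq v$, respects every defining relation of~$T(\Gamma)$ — relations involving~$v$ become trivial, and the rest are defining relations of~$T(\Delta)$ precisely because~$\Delta$ is an \emph{induced} subgraph — so it defines a retraction~$T(\Gamma)\to T(\Delta)$ splitting~$i$, and~$i$ is injective. If~$v$ \emph{is} the terminus of a directed edge, I would first apply (iii) with~$u=v$: then~$\varphi_v\colon T(\Gamma_v)\hookrightarrow T(\Gamma)$ is injective, in~$\Gamma_v$ the vertex~$v$ is no longer a terminus, and~$(\Gamma_v)\smallsetminus\{v\}=\Gamma\smallsetminus\{v\}=\Delta$; by the previous case the natural map~$T(\Delta)\hookrightarrow T(\Gamma_v)$ is injective, and composing with~$\varphi_v$ recovers~$i$, which is thus injective as well.

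For (ii): well-definedness means that~$u^2$ and~$v^2$ commute in~$T(\Gamma)$ whenever~$\{u,v\}\in E$; this is clear for an undirected edge, and for~$[u,v\rangle$ it follows from~$uvu=v$, which gives~$u^{-k}v=vu^{k}$ for all~$k$ and hence~$u^2 v^2 u^{-2}=v^2$. For injectivity I would argue directly, via the T-RAAG normal form, that~$q$ sends a reduced word of~$A(\overline{\Gamma})$ to a reduced word of~$T(\Gamma)$. Alternatively, iterating (iii) over all negative vertices of~$\Gamma$ (their number strictly drops at each step, since~$\Gamma\mapsto\Gamma_v$ only deletes directed edges) yields an injection~$A(\overline{\Gamma})\hookrightarrow T(\Gamma)$ sending each negative vertex to its square and each positive vertex to itself; precomposing this with the injective endomorphism of~$A(\overline{\Gamma})$ that squares the positive vertices and fixes the negative ones produces exactly~$q$. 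Either way, the hard part is once more the verification that no new reduction appears, which is where I would lean on \citep{foniqi2024twisted,antolin2025traag}.
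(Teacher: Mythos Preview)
Your proposal is correct and, at its core, matches the paper exactly: the paper gives no argument for this proposition beyond the one-line pointer to normal forms in T-RAAGs (the same two references you invoke), and your crux for~(iii) is precisely to appeal to those normal forms. Where you go further is in the reductions: deriving~(i) from~(iii) via the retraction $v\mapsto 1$ (well-defined exactly when $v$ is not the terminus of a directed edge, as you note), and deriving~(ii) from iterated applications of~(iii) together with the squaring endomorphism of the underlying RAAG. These reductions are sound --- in particular, your observation that $(\Gamma_v)\smallsetminus\{v\}=\Gamma\smallsetminus\{v\}$ and that $\varphi_v$ fixes the remaining generators ensures the composite really is the natural inclusion~$i$ --- and they have the pleasant effect of isolating~(iii) as the one place where the rewriting machinery is genuinely needed. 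The paper does not take this route; it simply defers all three parts wholesale to the cited references.
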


\begin{cor}\label{cor: A(P_4) and A(C_4) injectivity}
If~$\overline{\Gamma}$ is not a Droms simplicial graph -- i.e., it contains~$\mathrm{P}_4$ or~$\mathrm{C}_4$ as an induced subgraph --, then the T-RAAG~$T(\Gamma)$ contains the RAAG~$A(\mathrm{P}_4)$ or~$A(\mathrm{C}_4)$, respectively.
\end{cor}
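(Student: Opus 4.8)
The plan is to deduce this immediately from the two embedding results packaged in Proposition~\ref{proposition: subgraph injectivity}. Suppose first that $\overline{\Gamma}$ contains $\mathrm{P}_4$ as an induced subgraph; the $\mathrm{C}_4$ case is verbatim. Fix a vertex set $S = \{a,b,c,d\} \subseteq V$ such that the subgraph of $\overline{\Gamma}$ induced on $S$ is exactly $\mathrm{P}_4$, and let $\Delta$ be the mixed graph induced by $\Gamma$ on $S$. Note that $\Delta$ may carry some directed edges, but by construction its underlying simplicial graph is $\overline{\Delta} = \mathrm{P}_4$. By part~(i) of Proposition~\ref{proposition: subgraph injectivity}, the inclusion $S \hookrightarrow V$ induces an embedding $T(\Delta) \hookrightarrow T(\Gamma)$, so it suffices to exhibit a copy of $A(\mathrm{P}_4)$ inside $T(\Delta)$.

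For this I would apply part~(ii) of Proposition~\ref{proposition: subgraph injectivity}, not to $\Gamma$ itself but to the induced subgraph $\Delta$: the assignment $v \mapsto v^2$ extends to an injective homomorphism $q \colon A(\overline{\Delta}) \to T(\Delta)$. Since $\overline{\Delta} = \mathrm{P}_4$, this reads $A(\mathrm{P}_4) \hookrightarrow T(\Delta)$, and composing with the embedding $T(\Delta) \hookrightarrow T(\Gamma)$ from the previous step yields $A(\mathrm{P}_4) \hookrightarrow T(\Gamma)$. Replacing $\mathrm{P}_4$ by $\mathrm{C}_4$ throughout gives the corresponding statement for $A(\mathrm{C}_4)$.

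There is essentially no obstacle: all of the substance is already absorbed into Proposition~\ref{proposition: subgraph injectivity} (which rests on normal forms for T-RAAGs), and the corollary is only the remark that parts~(i) and~(ii) compose. The one point worth making explicit is that part~(ii) holds for an \emph{arbitrary} mixed graph, hence in particular for the induced subgraph $\Delta$ regardless of which of its edges are directed; thus no case analysis on the orientations appearing in $\Delta$ is needed, and part~(iii) of the proposition is not required here.
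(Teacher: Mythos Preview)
Your argument is correct and is exactly the intended one: the paper states this as an immediate corollary of Proposition~\ref{proposition: subgraph injectivity} with no separate proof, and your composition of parts~(i) and~(ii) is precisely how it is meant to be read. One could equally well apply~(ii) directly to $\Gamma$ to get $A(\overline{\Gamma})\hookrightarrow T(\Gamma)$ and then use the standard RAAG fact that $A(\mathrm{P}_4)\hookrightarrow A(\overline{\Gamma})$, but your route via the induced subgraph $\Delta$ is just as clean.
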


\begin{remark}\label{rem:no of tails}
Let~$\Gamma=(V,E,D,o,t)$ be a mixed graph; consider the abelianisation~$T(\Gamma)^{ab}$ of the associated T-RAAG~$T(\Gamma)$.
Then~$T(\Gamma)^{ab}$ is the direct product of a finitely generated free abelian group with a 2-elementary abelian group.
In particular,
\[
 T(\Gamma)^{ab}\simeq \Z^r\times(\Z/2\Z)^s,
\]
where~$s=|o(D)|$ -- namely,~$s$ is the number of vertices which are origin of directed edges.
\end{remark}

Let~$\Gamma=(V,E,D,o,t)$ be a mixed graph, and let~$\theta\colon V\to\Z^\times$ be a signature.
Then~$\theta$ extends to a homomorphism of groups~$T(\Gamma)\to\Z^\times$, as
\[
 [\theta(x),\theta(y)]=\theta(z)^2=1\qquad\text{for every }x,y,z\in T(\Gamma).
\]
We call the induced homomorphism~$T(\Gamma)\to\Z^\times$ a signature of~$T(\Gamma)$, and with an abuse of notation we denote it by~$\theta$ too.


\subsection{Operations}\label{ssec:operations}

Let~$\Gamma_1$ and~$\Gamma_2$ be two mixed graphs, and let~$\Gamma=\Gamma_1\sqcup\Gamma_2$ denote their disjoint union.
As it happens for RAAGs, the T-RAAG~$T(\Gamma)$ associated to~$\Gamma$ is the free product of the two T-RAAGs~$T(\Gamma_1)$ and~$T(\Gamma_2)$.
If
\[
 \theta_1\colon T(\Gamma_1)\longrightarrow\Z^\times\qquad\text{and}\qquad
 \theta_2\colon T(\Gamma_2)\longrightarrow\Z^\times
\]
are two signatures, then the universal property of free products yields a signature~$\theta\colon T(\Gamma)\to\Z^\times$, which is the one extending~$\theta_1$ and~$\theta_2$.

Now let~$\Gamma$ be a mixed graph, and let~$\theta\colon T(\Gamma)\to\Z^\times$ be a signature.
The T-RAAG~$T(\nabla_\theta(\Gamma))$ based on the cone of~$\Gamma$ with signature~$\theta$ decomposes as semi-direct product
\[
 T(\nabla_\theta(\Gamma))=\langle\:w\:\rangle\rtimes_{\theta}T(\Gamma),
\]
where~$w$ is the tip of the cone,~$\langle\:w\:\rangle\simeq\Z$, and one has
\[
 x^{-1}wx=w^{\theta(x)}\qquad\text{for every }x\in T(\Gamma).
\]

T-RAAGs which may be constructed recursively performing free products and semi-direct products as above starting from T-RAAGs based on isolated vertices, endowed with some signatures, are called \emph{elementary T-RAAGs}.

As a consequence of~\autoref{prop: droms elementary type} one has the following.

\begin{cor}\label{cor:elementary}
A mixed graph~$\Gamma$ is a Droms mixed graph if, and only if, the associated T-RAAG~$T(\Gamma)$ is an elementary T-RAAG.
\end{cor}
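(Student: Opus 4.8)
The plan is to read \autoref{cor:elementary} as a dictionary statement: the recursive description of Droms mixed graphs in \autoref{prop: droms elementary type} matches, under the correspondence of \Cref{ssec:operations}, the recursive definition of elementary T-RAAGs. Concretely, disjoint union of mixed graphs corresponds to free product of T-RAAGs, and the cone $\nabla_\theta(\cdot)$ corresponds to the semi-direct product $\langle w\rangle\rtimes_\theta(\cdot)$; and by \autoref{T-stable subgraphs} passing to induced subgraphs never leaves the class of Droms mixed graphs. With this dictionary, both implications become short inductions on the number of vertices.

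For ``$\Gamma$ Droms $\Rightarrow T(\Gamma)$ elementary'' I would induct on $|V|$. The base case $|V|\le 1$ is clear, as then $T(\Gamma)$ is trivial or infinite cyclic. If $|V|\ge 2$, I apply \autoref{prop: droms elementary type} with $\Lambda=\Gamma$: either $\Gamma=\Gamma_1\sqcup\Gamma_2$ with $\Gamma_1,\Gamma_2$ non-empty proper induced subgraphs, or $\Gamma=\nabla_{\theta_0}(\Gamma_0)$ for an induced subgraph $\Gamma_0$ and a signature $\theta_0$ of $\Gamma_0$. The smaller graphs here are induced subgraphs of $\Gamma$, hence Droms mixed graphs by \autoref{T-stable subgraphs}, so by the inductive hypothesis $T(\Gamma_1),T(\Gamma_2)$ (resp.\ $T(\Gamma_0)$) are elementary. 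Invoking \Cref{ssec:operations}, $T(\Gamma)=T(\Gamma_1)\ast T(\Gamma_2)$ in the first case and $T(\Gamma)=\langle w\rangle\rtimes_{\theta_0}T(\Gamma_0)$ in the second (reading $\theta_0$ as the induced homomorphism $T(\Gamma_0)\to\Z^\times$); either way $T(\Gamma)$ is obtained from elementary T-RAAGs by one further admissible operation, hence is elementary.

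For the converse I would fix a recipe exhibiting $T(\Gamma)$ as an elementary T-RAAG: it starts from copies of $\Z$, regarded as T-RAAGs on single signed vertices, and repeatedly applies free products and semi-direct products $\langle w\rangle\rtimes_\theta(\cdot)$. The point is that this recipe is, simultaneously, a construction of the defining graph: by \Cref{ssec:operations} a free product step reads $T(\Delta_1)\ast T(\Delta_2)=T(\Delta_1\sqcup\Delta_2)$ and a semi-direct product step reads $\langle w\rangle\rtimes_\theta T(\Delta)=T(\nabla_\theta(\Delta))$, with signatures combining and extending accordingly. Hence $\Gamma$ itself is built from isolated vertices by iterated disjoint unions and cones, and it remains to check that every such mixed graph is a Droms mixed graph. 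This is immediate from \autoref{def:droms}: in a disjoint union of Droms mixed graphs no edges run between the two parts, so special-ness is inherited and an induced copy of $\mathrm{P}_4$, $\mathrm{C}_4$ or $\Lambda_{\mathrm s}$ — being connected — would lie in one part, a contradiction; and in a cone $\nabla_\theta(\Delta)$ of a Droms mixed graph the tip $w$ is positive (so not negative), the only vertices that newly become negative are sinkholes, and $\mathrm{P}_4$, $\mathrm{C}_4$, $\Lambda_{\mathrm s}$ have no vertex adjacent to all others, so an induced copy would avoid $w$ and lie in $\Delta$. An easy induction now gives that $\Gamma$ is a Droms mixed graph; alternatively this last step is exactly the ``if'' direction of \autoref{prop: droms elementary type}.

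I expect the only delicate point to be the bookkeeping in the converse: one must be certain that when an elementary T-RAAG is presented as a free product or as $\langle w\rangle\rtimes_\theta(\cdot)$, this decomposition is faithfully mirrored on the level of defining graphs, so that the recursively built group really is $T(\Gamma)$ for the recursively built graph. This is precisely the content of the dictionary in \Cref{ssec:operations} — the identifications $T(\Gamma_1\sqcup\Gamma_2)=T(\Gamma_1)\ast T(\Gamma_2)$ and $T(\nabla_\theta\Gamma)=\langle w\rangle\rtimes_\theta T(\Gamma)$ together with the compatibility of signatures — so once that is invoked there is essentially nothing further to prove; the remainder is the routine induction above.
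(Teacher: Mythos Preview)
Your proposal is correct and matches the paper's approach exactly: the paper states \autoref{cor:elementary} as an immediate consequence of \autoref{prop: droms elementary type} with no further proof, and your argument is precisely the natural unpacking of that claim via the dictionary in \Cref{ssec:operations}. Your closing observation that the converse verification ``is exactly the `if' direction of \autoref{prop: droms elementary type}'' is spot on --- once you invoke that, the separate check that disjoint unions and cones preserve the Droms property becomes redundant, and the whole corollary collapses to a restatement of \autoref{prop: droms elementary type} through the graph--group dictionary.
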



\subsection{T-RAAGs and special mixed Graphs}\label{sec:special TRAAGs}

T-RAAGs associated to special mixed graphs are characterised as follows.

\begin{proposition}\label{proposition:special}
 Let~$T(\Gamma)$ be a T-RAAG.
 The defining mixed graph~$\Gamma=(V,E,D,o,t)$ is special if, and only if, there exists a complete special mixed graph~$\Delta=(V_\Delta,E_\Delta,D_\Delta,o_\Delta,t_\Delta)$ with the same number of vertices, endowed with an epimorphism~$T(\Gamma)\to T(\Delta)$.
\end{proposition}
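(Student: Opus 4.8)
The plan is to prove both implications directly, constructing the relevant objects by hand. For the forward direction, suppose $\Gamma=(V,E,D,o,t)$ is special. We want a complete special mixed graph $\Delta$ on the same vertex set together with an epimorphism $T(\Gamma)\to T(\Delta)$. The natural candidate for $\Delta$ is: take the complete graph on $V$, and declare an edge $\{x,y\}$ to be the directed edge $[x,y\rangle$ precisely when $y$ is a negative vertex of $\Gamma$ (equivalently a sinkhole, since $\Gamma$ is special); all other edges of $\Delta$ are undirected. By \autoref{ex:complete special}, to see that $\Delta$ is special we must check it has at most one negative vertex. This is where specialness of $\Gamma$ is used crucially — but in fact $\Gamma$ could have several sinkholes (e.g. $\Gamma_3$ has two), so this naive $\Delta$ need not be special. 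The fix is that for the epimorphism to exist we do not need $\Delta$ to record \emph{all} sinkholes of $\Gamma$: in $T(\Delta)$ a directed edge $[x,y\rangle$ forces $x$ to invert $y$, and since $\Delta$ is complete this must be globally consistent. So instead I would pick a signature $\theta\colon V\to\Z^\times$ of $\Gamma$ (which exists after choosing values on isolated vertices, by \autoref{rem:signature graph}), and define $\Delta$ to have exactly one sinkhole, namely any single vertex $w$ with $\theta(w)=-1$ if one exists — more precisely, build $\Delta$ as the complete special mixed graph whose unique negative vertex $w$ (if present) is chosen so that the quotient map is compatible.

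Concretely, the cleanest route is: let $\theta\colon T(\Gamma)\to\Z^\times$ be the signature homomorphism (from the discussion after \autoref{rem:no of tails}). I would define $\Delta$ to be the complete special mixed graph on $V$ whose set of negative vertices is a single chosen vertex $w_0$ with $\theta(w_0)=-1$ (and $\Delta$ has no directed edges at all if $\theta\equiv 1$), and then define $\pi\colon T(\Gamma)\to T(\Delta)$ on generators by $v\mapsto v$. The work is to check $\pi$ respects the relations of $T(\Gamma)$: a commutation $xy=yx$ in $T(\Gamma)$ must map to a valid relation in $T(\Delta)$, and a Klein relation $xyx=y$ in $T(\Gamma)$ (so $y$ is a sinkhole of $\Gamma$, $\theta(y)=-1$) must hold in $T(\Delta)$. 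In $T(\Delta)$ one has, for any $x$, $x^{-1}yx = y^{\theta_\Delta(x)}$ where $\theta_\Delta$ is the signature of $\Delta$; since $T(\Delta)$ is a complete special graph group it is (by the semidirect-product description after \autoref{cor:elementary}, applied iteratively via \autoref{ex:cone complete}) of the form $\Z^d\rtimes\langle w_0\rangle$ or $\Z^d$, and every relation $xyx=y$ with $\theta(y)=-1$ indeed holds there. Surjectivity of $\pi$ is immediate as it hits every generator.

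For the converse, suppose such a complete special $\Delta$ and epimorphism $\pi\colon T(\Gamma)\to T(\Delta)$ exist; I must show $\Gamma$ is special. I would argue contrapositively: if $\Gamma$ is not special it contains one of the seven three-vertex induced subgraphs listed in \eqref{eq:triangle torsion proof}–\eqref{eq:triangle open proof}. By \autoref{proposition: subgraph injectivity}(i) the corresponding $T(\Delta')$ embeds in $T(\Gamma)$, and I would derive a contradiction with the existence of $\pi$ by examining what $\pi$ does to these three generators — the key point being that in $T(\Delta)$, a complete special graph group, the image of any three generators generates a quotient of a three-generated complete special group, whereas the seven forbidden configurations generate groups (detectable via torsion in the abelianization, cf. \autoref{rem:no of tails}, or via the signature obstruction $[\theta(x),\theta(y)]=1$ failing appropriately) that cannot surject onto anything consistent. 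Here the cardinality hypothesis "$\Delta$ has the same number of vertices as $\Gamma$" is what pins down the abelianization rank and forces the contradiction.

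The main obstacle will be the forward direction's bookkeeping of signatures when $\Gamma$ is disconnected or has isolated vertices, and ensuring the single chosen sinkhole $w_0$ of $\Delta$ is compatible with \emph{every} Klein relation of $\Gamma$ simultaneously — this is exactly where specialness of $\Gamma$ (every negative vertex is a sinkhole, so all the ``$-1$'' information is consistent and captured by a single homomorphism $\theta$) does the real work. The converse is comparatively routine once one has the subgraph-injectivity and abelianization tools already recorded above.
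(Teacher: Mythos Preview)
Your forward direction contains a real gap. You correctly observe that a complete special mixed graph has at most one sinkhole while $\Gamma$ may have several, but your proposed fix---choosing a single $w_0$ with $\theta(w_0)=-1$ and sending $v\mapsto v$---does not work. Suppose $\Gamma$ has two distinct sinkholes $y$ and $w_0$, and $[x,y\rangle\in D$. In your $\Delta$ the only sinkhole is $w_0$; since $x$ is an origin in $\Gamma$ it is positive, so $x\neq w_0$, and $y\neq w_0$ by hypothesis. Hence the edge $\{x,y\}$ of $\Delta$ is undirected, $x$ and $y$ commute in $T(\Delta)$, and the image of the relation $xyx=y$ would force $x^2=1$ in the torsion-free group $T(\Delta)$. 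Concretely, for $\Gamma_3$ from the Introduction (sinkholes $a_1,a_2$) the relation $b_1a_2b_1=a_2$ cannot survive the identity map into any one-sinkhole complete $\Delta$. Your sentence ``every relation $xyx=y$ with $\theta(y)=-1$ indeed holds there'' silently replaces the signature of $\Gamma$ by that of $\Delta$: in $\Delta$ the vertex $y$ is positive, not negative.

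The paper does not use the identity on generators. With sinkholes $x_1,\dots,x_r$ and positive vertices $y_1,\dots,y_s$, it passes to the quotient of $T(\Gamma)$ by the normal closure of the elements $[y_i,y_{i'}]$, $y_i^{x_1}y_i$, and $x_jx_1^2x_j$ for $j>1$, and identifies the result with the T-RAAG on the complete special graph whose unique sinkhole is $x_1$ and whose remaining vertices are $x_1x_2,\dots,x_1x_r,y_1,\dots,y_s$. The point is the identity $(x_1x_j)\,x_1\,(x_1x_j)=x_1\cdot(x_jx_1^2x_j)$: once $x_jx_1^2x_j$ is killed, each product $x_1x_j$ behaves as a positive vertex with a directed edge toward $x_1$, so the several sinkholes of $\Gamma$ are merged into one. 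This merging device is what your proposal is missing.

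Your converse sketch---restrict to a forbidden three-vertex induced subgraph and rule out an epimorphism onto a three-generator complete special T-RAAG (i.e.\ onto $\Z^3$ or $\Z^2\rtimes\Z$)---is essentially the paper's argument.
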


\begin{proof}
Suppose first that there is an epimorphism~$\phi\colon T(\Gamma)\to T(\Delta)$ with~$\Delta$ a complete special mixed graph with the same vertices as~$\Gamma$.
By~\autoref{proposition: subgraph injectivity}, for every induced subgraph~$\Gamma'$ of~$\Gamma$,~$\phi$ restricts to an epimorphism~$\phi\vert_{T(\Gamma')}\colon T(\Gamma')\to T(\Delta')$, where~$\Delta'$ is an induced subgraph of~$\Delta$ -- hence,~$\Delta'$ is again a complete special mixed graph, cf.~\autoref{ex:complete Droms} -- with the same vertices as~$\Gamma'$.
In particular, if~$\Gamma'$ has three vertices, then~$T(\Gamma')$ has epimorphic image~$\Z^3$ (when~$\Delta'$ is a triangle without sinkholes), or~$\Z^2\rtimes \Z$ (when~$\Delta'$ is a triangle with a sinkhole).
If~$\Gamma'$ is like one of the mixed graphs with three vertices displayed in \eqref{eq:triangle torsion proof}--\eqref{eq:triangle open proof}, then~$T(\Gamma')$ cannot have such an epimorphic image.
Then~$\Gamma$ has no induced subgraphs like those displayed in \eqref{eq:triangle torsion proof}--\eqref{eq:triangle open proof}, and thus~$\Gamma$ is special.

Conversely, assume that~$\Gamma$ is special.
Without loss of generality, we may assume that~$T(\Gamma)$ does not decompose as a non-trivial free product -- namely,~$\Gamma$ is connected.
In particular,~$\Gamma$ admits a unique signature~$\theta\colon T(\Gamma)\to\Z^\times$ (cf.~\autoref{rem:signature graph}).
Now let~$x_1,\ldots,x_r\in V$ be the negative vertices of~$\Gamma$, and let~$y_1,\ldots,y_s\in V$ be the positive vertices. Since~$\Gamma$ is special, the negative vertices are sinkholes, and thus they are pairwise disjoint.

If~$\Gamma$ has no sinkholes, then~$\Gamma$ has no directed edges and thus it is a simplicial graph,~$T(\Gamma)\simeq A(\Gamma)$, and hence~$T(\Gamma)^{ab}\simeq\Z^s\simeq T(\Delta)$, where~$\Delta$ is the complete simplicial graph with~$s$ vertices.

If~$\Gamma$ has at least a sinkhole, let~$N$ be the normal subgroup of~$T(\Gamma)$ generated as a normal subgroup by the following elements:
\[
 [y_i,y_{i'}],\qquad y_i^{x_1}y_i,\qquad x_jx_1^2x_j,\qquad\text{with }1\leq i,i'\leq s,\:1<j\leq r.
\]
Then the quotient~$T(\Gamma)/N$ is isomorphic to the T-RAAG~$T(\Delta)$ based on the complete mixed graph~$\Delta=(V_\Delta,E_\Delta,D_\Delta,o_\Delta,t_\Delta)$, with
\[
 \begin{split}
  V_{\Delta}&=\left\{\:x_1,\:x_1x_2,\ldots,x_1x_r,\:y_1,\:\ldots,\:y_s\:\right\},\\
  E_\Delta&=\left\{\:[y_i,y_i'],\:[y_i,x_1x_j],\:[y_i,x_1\rangle,\:[x_1x_j,x_y\rangle\:\mid\:1\leq i,i'\leq s,\:1<j\leq r\:\right\}.
 \end{split}
\]
The complete mixed graph~$\Delta$ is special as it has a unique negative vertex,~$x_1$, which is a sinkhole.
\end{proof}


\section{Coherence in T-RAAGs}\label{sec: Coherence in T-RAAGs}

A group is \emph{coherent} if all of its finitely generated subgroups are finitely presented. This work provides an explicit characterization of the coherence of T-RAAGs, emphasizing their similarity to RAAGs~\citep{droms1987graph}.

A (simplicial) graph~$\Gamma$ is called \emph{chordal} if all cycles of four or more vertices have a \emph{chord}, which is an edge that is not part of the cycle but connects two vertices of the cycle. Droms provided the following simple characterisation of coherent RAAGs. 

\begin{theorem}[{\citealp[Theorem 1]{droms1987graph}}]
Let~$\Gamma$ be a finite simplicial graph. Then~$A(\Gamma)$ is coherent if and only if~$\Gamma$ is chordal.
\end{theorem}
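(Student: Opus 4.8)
The plan is to prove the two implications separately, using on the graph side the recursive structure of chordal graphs and on the group side the behaviour of coherence under graphs of groups.

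\textbf{Chordal $\Rightarrow$ coherent.} I would induct on the number of vertices, the base case $A(\Gamma)\cong\Z$ being clear. If $\Gamma$ is chordal with at least one vertex, Dirac's theorem provides a \emph{simplicial} vertex $v$, i.e.\ one whose link $\mathrm{Lk}(v)$ is a clique, and $\Gamma\smallsetminus v$ is again chordal since chordality is inherited by induced subgraphs. Since $\mathrm{St}(v)=\{v\}\cup\mathrm{Lk}(v)$ is a clique and $\mathrm{Lk}(v)=(\Gamma\smallsetminus v)\cap\mathrm{St}(v)$ is a full subgraph of both, the usual decomposition of RAAGs along full subgraphs gives $A(\Gamma)\cong A(\Gamma\smallsetminus v)*_{A(\mathrm{Lk}(v))}A(\mathrm{St}(v))$ with $A(\mathrm{St}(v))\cong A(\mathrm{Lk}(v))\times\Z$; equivalently, $A(\Gamma)$ is the HNN extension of $A(\Gamma\smallsetminus v)$ with stable letter $v$ identifying the free abelian subgroup $A(\mathrm{Lk}(v))\cong\Z^{k}$ with itself via the identity (and a plain free product when $\mathrm{Lk}(v)=\emptyset$). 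By induction $A(\Gamma\smallsetminus v)$ is coherent, so the inductive step reduces to the following key lemma: \emph{coherence is preserved by free products, by amalgamated products, and by HNN extensions over polycyclic edge groups.}

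\textbf{The key lemma (the hard part).} I would prove it by Bass--Serre theory. Let $G=P*_C Q$ (or an HNN extension over $C$) with $P,Q$ coherent and $C$ polycyclic(-by-finite), and let $H\le G$ be finitely generated, acting on the Bass--Serre tree $T$. If $H$ fixes a vertex it lies in a conjugate of $P$ or $Q$ and is finitely presented by hypothesis. Otherwise $H$ has a minimal invariant subtree with finite quotient — here one uses that a finitely generated group fixing an end of $T$ already fixes a vertex, so no such pathology occurs — and Bass--Serre theory realises $H$ as the fundamental group of a \emph{finite} graph of groups whose edge groups are of the form $H\cap gCg^{-1}$, hence polycyclic, in particular finitely generated and finitely presented; the vertex groups are then finitely generated subgroups of conjugates of $P$ or $Q$, hence finitely presented by coherence. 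Finally, a finite graph of groups with finitely presented vertex and edge groups has finitely presented fundamental group, obtained by collecting the vertex presentations, one stable letter per non-tree edge, and the finitely many edge relations. This lemma is the only point where genuine care is needed: the crux is that polycyclic groups are Noetherian, which is exactly what makes the edge groups $H\cap gCg^{-1}$ finitely generated and lets one conclude that the decomposition of $H$ is finite with finitely generated vertex groups — the step most at risk of failing for larger edge groups (witness $A(\mathrm{C}_4)\cong F_2\times F_2\cong(F_2\times\Z)*_{F_2}(F_2\times\Z)$, where the edge group $F_2$ is not polycyclic and the conclusion is false).

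\textbf{Not chordal $\Rightarrow$ not coherent.} If $\Gamma$ is not chordal it contains an induced cycle $\mathrm{C}_n$ with $n\ge4$, and the induced-subgraph inclusion makes $A(\mathrm{C}_n)$ a retract of $A(\Gamma)$ (retract via the homomorphism killing the remaining generators), so it suffices to show $A(\mathrm{C}_n)$ is incoherent. For $n\ge4$ the flag complex of $\mathrm{C}_n$ is the cycle itself, which is connected but not simply connected; by the Bestvina--Brady theorem the kernel of the homomorphism $A(\mathrm{C}_n)\to\Z$ sending every standard generator to $1$ is then finitely generated but not finitely presented. (For $n=4$ one may instead cite the classical fact that $F_2\times F_2$ contains a finitely generated, non–finitely-presented subgroup.) Since a group containing an incoherent subgroup is itself incoherent, $A(\Gamma)$ is not coherent, which completes the argument.
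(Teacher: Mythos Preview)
The paper does not give its own proof of this statement; it is quoted from \citep{droms1987graph} as background for the T-RAAG analogue. Your argument is correct, modulo one slip in the Bass--Serre step: the assertion that ``a finitely generated group fixing an end of $T$ already fixes a vertex'' is false (take $\Z$ translating a line --- it fixes both ends and no vertex). The conclusion you actually need, that a finitely generated $H$ with no global fixed point acts cocompactly on its minimal subtree, is nonetheless true and standard: for any vertex $v$ and finite generating set $s_1,\ldots,s_n$ of $H$, the $H$-translates of the finite convex hull of $\{v,s_1v,\ldots,s_nv\}$ cover $T_H$, so $T_H/H$ is finite.

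Comparing with the paper's proof of the T-RAAG analogue (\autoref{thm: coherence}): your ``chordal $\Rightarrow$ coherent'' direction is the same strategy --- recursive decomposition of a chordal graph plus a Karrass--Solitar--type coherence lemma. The differences are cosmetic: you peel off a Dirac simplicial vertex and phrase the splitting as an HNN extension over the free-abelian link, while the paper splits along a clique separator as an amalgam and cites \autoref{lemma: facts coherent} directly rather than reproving it via Bass--Serre theory. Your version has the advantage of making explicit why the lemma works (Noetherianity of the edge group) and why it fails for $F_2\times F_2$. For ``not chordal $\Rightarrow$ not coherent'' you go further than the paper, which simply invokes Droms' result as a black box (via the embedding $A(\overline\Gamma)\hookrightarrow T(\Gamma)$): your Bestvina--Brady argument for the incoherence of $A(\mathrm{C}_n)$ is clean and correct, though it postdates Droms' 1987 paper and is not the route he took.
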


\begin{lemma}[{\citealp[Theorem 8]{karrass1970subgroups}}]\label{lemma: facts coherent}
Let~$A, B$ be groups and~$C$ a common subgroup. If~$A$ and~$B$ are coherent and all subgroups of~$C$ are finitely generated, then the group~$G = A *_C B$ is coherent.
\end{lemma}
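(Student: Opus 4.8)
The plan is to prove directly that every finitely generated subgroup $H \le G = A *_C B$ is finitely presented, via Bass--Serre theory. Let $T$ be the Bass--Serre tree of the splitting $G = A *_C B$, on which $G$ acts with vertex stabilisers the conjugates of $A$ and $B$ and edge stabilisers the conjugates of $C$. Restrict the action to $H$: since $H$ is finitely generated, either it fixes a vertex of $T$, or it admits a unique minimal invariant subtree $T_H$ on which it acts cocompactly; in either case the quotient graph $Y := H\backslash T_H$ is finite. By the structure theorem for groups acting on trees, $H \cong \pi_1(\mathcal Y)$ for a finite graph of groups $\mathcal Y$ over $Y$ whose vertex groups are intersections $H \cap gAg^{-1}$ or $H \cap gBg^{-1}$ and whose edge groups are intersections $H \cap gCg^{-1}$, with $g \in G$.

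Next I would upgrade $\mathcal Y$ so that the vertex groups become finitely generated, indeed finitely presented. Each edge group $H \cap gCg^{-1}$ is a subgroup of $gCg^{-1}\cong C$, hence finitely generated by the hypothesis that every subgroup of $C$ is finitely generated. The vertex groups need not be finitely generated a priori, but since $H$ is finitely generated and there are only finitely many edges, I can choose, for each vertex $v$ of $Y$, a finitely generated subgroup $H_v^{0}$ of the vertex group $H_v$ that contains the images of all edge groups incident to $v$ together with the $H_v$-syllables appearing in a fixed finite generating set of $H$. Replacing each $H_v$ by $H_v^{0}$ while keeping the graph $Y$ and the edge groups unchanged produces a \emph{subgraph of groups} $\mathcal Y^{0}$; the natural homomorphism $\pi_1(\mathcal Y^{0}) \to \pi_1(\mathcal Y) = H$ is injective (on Bass--Serre trees it is the inclusion of an $H$-invariant subtree, with no folding since the edge groups are unchanged) and its image contains a generating set of $H$, so it is an isomorphism. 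Thus $H = \pi_1(\mathcal Y^{0})$ with $Y$ finite, each $H_v^{0}$ finitely generated, and all edge groups finitely generated.

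Finally, each $H_v^{0}$ is a finitely generated subgroup of a conjugate of $A$ or of $B$, hence finitely presented because $A$ and $B$ are coherent. It then remains to invoke the elementary fact that the fundamental group of a finite graph of groups with finitely presented vertex groups and finitely generated edge groups is finitely presented: take the union of the finite presentations of the vertex groups, adjoin one stable letter for each edge outside a spanning tree of $Y$, and adjoin the finitely many relations expressing that the two embeddings of a finite generating set of each edge group agree. This yields a finite presentation of $H$, completing the proof.

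The step I expect to be the main obstacle is the Bass--Serre bookkeeping of the first two paragraphs: that a finitely generated group acting on a tree acts cocompactly on its minimal invariant subtree (so the graph of groups over $Y$ is finite), and that passing to finitely generated vertex groups containing the incident edge groups does not change $\pi_1$. These are precisely where the hypotheses "$H$ finitely generated" and "every subgroup of $C$ finitely generated" are used; everything else is routine. An alternative that sidesteps trees is to run the same argument through the Karrass--Solitar subgroup theorem for amalgamated free products, which packages the same structural description of $H$ directly.
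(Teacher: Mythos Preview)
The paper does not prove this lemma: it is simply quoted as \citep[Theorem~8]{karrass1970subgroups} and used as a black box in the proof of \autoref{thm: coherence}. Your proposal thus goes beyond what the paper does, supplying a self-contained argument.

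That argument is correct and is the standard modern proof via Bass--Serre theory, which is a geometric repackaging of the Karrass--Solitar subgroup theorem you allude to at the end. The two steps you flag as delicate are both fine: cocompactness of the minimal subtree of a finitely generated $H$ is classical (the convex hull of the images of a basepoint under a finite generating set has finite quotient and its $H$-translates cover $T_H$), and injectivity of $\pi_1(\mathcal Y^{0})\to\pi_1(\mathcal Y)$ follows from the normal-form theorem, since a reduced word over $\mathcal Y^{0}$ remains reduced over $\mathcal Y$ when the edge groups are unchanged. One quibble: your parenthetical justification (``inclusion of an $H$-invariant subtree'') is not quite the right picture---the Bass--Serre tree of $\mathcal Y^{0}$ is not literally a subtree of that of $\mathcal Y$---but the conclusion is unaffected, and the normal-form argument you implicitly rely on is the honest reason.
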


For a finite mixed graph~$\Gamma$ we will say that~$\Gamma$ is chordal if~$\overline{\Gamma}$ is chordal.

\begin{theorem}\label{thm: coherence}
The T-RAAG~$T(\Gamma)$ is coherent if and only if~$\Gamma$ is chordal.
\end{theorem}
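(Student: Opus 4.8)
\emph{Overview.} I would prove the two implications separately. The necessity of chordality is soft and uses only the embedding of the underlying RAAG; the sufficiency mimics Droms' argument for RAAGs, splitting along a simplicial vertex, and the one genuinely new ingredient is a lemma identifying T-RAAGs on complete mixed graphs.

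\emph{Necessity.} Assume $T(\Gamma)$ is coherent. By \autoref{proposition: subgraph injectivity}(ii) the map $q\colon A(\overline{\Gamma})\to T(\Gamma)$, $v\mapsto v^2$, embeds the underlying RAAG $A(\overline{\Gamma})$ as a subgroup of $T(\Gamma)$. Coherence passes to subgroups — a finitely generated subgroup of a subgroup is a finitely generated subgroup of the whole group, hence finitely presented — so $A(\overline{\Gamma})$ is coherent, and by Droms' theorem (quoted above) $\overline{\Gamma}$ is chordal. Equivalently: if $\overline{\Gamma}$ is not chordal, then $A(\overline{\Gamma})\le T(\Gamma)$ is an incoherent subgroup, so $T(\Gamma)$ is incoherent.

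\emph{Sufficiency.} Suppose $\overline{\Gamma}$ is chordal; I induct on $|V|$. If $\overline{\Gamma}$ is complete, the Key Lemma below gives that $T(\Gamma)$ is virtually $\mathbb{Z}^{|V|}$, hence polycyclic, hence coherent. If $\overline{\Gamma}$ is not complete (and $|V|\ge 2$), use the classical fact (Dirac) that a chordal graph has a simplicial vertex $v$; then the link $L$ and the closed star $S$ of $v$ have complete underlying graphs. With $\Gamma_0=\Gamma-v$ I claim
\[
  T(\Gamma)\;\cong\;T(\Gamma_0)\ast_{T(L)}T(S).
\]
This is combinatorial: the defining presentation of $T(\Gamma)$ has generating set $V(\Gamma_0)\cup\{v\}$ and relators split as those among $V(\Gamma_0)$ (giving $T(\Gamma_0)$) together with those involving $v$ (which are exactly the $v$-relators of $S$), the overlap of the two relator sets being precisely the relators of $L$; since $T(L)$ embeds into both $T(\Gamma_0)$ and $T(S)$ by \autoref{proposition: subgraph injectivity}(i), this presentation is that of the amalgam above. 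Now $\overline{\Gamma_0}$ is an induced subgraph of $\overline{\Gamma}$, hence chordal, so $T(\Gamma_0)$ is coherent by induction; $T(S)$ is coherent since $\overline{S}$ is complete (Key Lemma); and every subgroup of $T(L)$ is finitely generated since $\overline{L}$ is complete, so $T(L)$ is polycyclic, hence Noetherian, by the Key Lemma. \autoref{lemma: facts coherent} then yields that $T(\Gamma)$ is coherent, completing the induction.

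\emph{The Key Lemma and the main difficulty.} The crux is: if $\Delta$ is a complete mixed graph on $n$ vertices, then $T(\Delta)$ sits in a short exact sequence $1\to\mathbb{Z}^n\to T(\Delta)\to Q\to 1$ with $Q$ a finite $2$-group; hence $T(\Delta)$ is virtually $\mathbb{Z}^n$, so polycyclic, Noetherian and coherent. I would prove this by showing $H=\langle\, v^2 : v\in V(\Delta)\,\rangle$ is normal with finite quotient. Normality is checked edge by edge on generators $u,w$: if $\{u,w\}$ is undirected then $u$ centralizes $w^2$; if $u$ is the origin and $w$ the terminus of a directed edge, the Klein relation $uwu=w$ rearranges to $u^{-1}w=wu$, which forces $uw^2u^{-1}=w^2$ (so $u$ centralizes $w^2$ even though it does not centralize $w$); and if instead $w$ is the origin and $u$ the terminus, then $u$ inverts $w$, so $uw^2u^{-1}=w^{-2}$. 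Thus $H\trianglelefteq T(\Delta)$, and $H\cong\mathbb{Z}^n$ by \autoref{proposition: subgraph injectivity}(ii) since $\overline{\Delta}$ is complete. In the quotient $Q=T(\Delta)/H$ every generator is an involution, and each relation $uwu=w$ collapses to $uw=wu$ once $u^2=1$, so $Q$ is abelian, hence a quotient of $(\mathbb{Z}/2\mathbb{Z})^n$ and in particular finite. I expect this Key Lemma — specifically the verification that $H$ is normal and that the twisted relations trivialise modulo squares — to be the main technical point; the rest (the splitting along a simplicial vertex and the appeal to \autoref{lemma: facts coherent}) follows Droms' RAAG argument almost verbatim.
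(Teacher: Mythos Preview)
Your proof is correct and follows essentially the same approach as the paper: both use the embedding $A(\overline{\Gamma})\hookrightarrow T(\Gamma)$ for necessity, and for sufficiency both induct via an amalgamated decomposition over a complete subgraph (you peel off a simplicial vertex, the paper uses the equivalent clique-separator formulation) and invoke \autoref{lemma: facts coherent}, with the key input that T-RAAGs on complete mixed graphs are virtually abelian. Your explicit verification of this Key Lemma is more detailed than the paper, which simply asserts it.
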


\begin{proof}
If~$\Gamma$ is not chordal, then~$A(\overline{\Gamma})$ is not coherent. From \autoref{proposition: subgraph injectivity} we know that~$A(\overline{\Gamma})$ injects into~$T(\Gamma)$, hence~$T(\Gamma)$ is not coherent.

On the other hand, assume~$\Gamma$ is chordal. In the case when~$\Gamma$ is complete, the group~$T(\Gamma)$ is virtually free abelian, and hence coherent as well. If~$\Gamma$ is not complete, then there are two proper subgraphs~$\Gamma_1$ and~$\Gamma_2$ with~$\Gamma = \Gamma_1 \cup \Gamma_2$ and~$\Gamma_1 \cap \Gamma_2 = C$, with~$C$ complete. Here one has a splitting:
\[
T(\Gamma) = T(\Gamma_1) *_ {T(C)} T(\Gamma_2).
\]
As~$C$ is complete,~$T(C)$ is virtually free abelian. Now the result follows from \autoref{lemma: facts coherent}.
\end{proof}

\section{When Subgroups Fail to Be T-RAAGs}\label{sec: When Subgroups Fail to Be T-RAAGs}

In this section, we show that the excluded graphs correspond to T-RAAGs that contain finitely generated subgroups which are not themselves T-RAAGs.

\subsection{Poisonous subgraphs}

We begin with the cases of~$P_4$ and~$C_4$. These cases follow the argument in \citep{droms1987subgroups}. Start with a mixed graph~$\Gamma$, and assume that~$\overline{\Gamma}$ contains~$P_4$, or~$C_4$. Then, by \autoref{proposition: subgraph injectivity}, our group~$T(\Gamma)$ contains~$A(P_4)$ or~$A(C_4)$. So, it suffices to show that~$A(P_4)$ and~$A(C_4)$ contain finitely generated subgroups that are not T-RAAGs.

The group~$A(C_4)$ is isomorphic to~$F_2 \times F_2$. This group is not coherent; that is, it contains a finitely generated subgroup~$H$ that is not finitely presented. Clearly, such a subgroup~$H$ cannot be a twisted graph group.

For the case of 
\[
A(P_4) = \langle x, y, z, w \mid xy = yx,\ yz = zy,\ zw = wz \rangle,
\]
consider the homomorphism~$\varphi \colon A(P_4) \to \mathbb{Z}_2 = \langle t \rangle$ that maps each generator of~$A(P_4)$ to~$t$. As shown in \citep{droms1987subgroups}, using the Reidemeister-Schreier method, the kernel of~$\varphi$ admits the presentation:
\[
H = \langle a, b, c, d \mid ab = ba,\ bc = cb,\ bc^2d = dbc^2 \rangle,
\]
where~$a, b, c, d$ correspond to the elements~$xy^{-1}, y^2, y^{-1}z, z^{-1}w$, respectively. \cite{droms1987subgroups} showed that~$H$ is not a graph group. Here, we show that~$H$ is not a twisted graph group either.

Suppose for contradiction that~$H$ is a twisted graph group, say~$H = T(\Gamma)$ for some mixed graph~$\Gamma$. Since~$K^{ab} \cong \mathbb{Z}^4$, we know that~$\Gamma$ must have exactly~$4$ vertices, and no directed edges; this follows from \cite[Subsection 5.1]{foniqi2024twisted}, which states:
\[
T(\Gamma)^{ab} \cong \mathbb{Z}^{|V\Gamma - V_o|} \times \mathbb{Z}_2^{|V_o|},
\]
where~$V_o$ denotes the set of vertices of~$\Gamma$ that are the origin of a directed edge. 

Consequently, we have shown that if~$H$ is a T-RAAG, it must actually be a RAAG. However, \cite{droms1987subgroups} demonstrated that~$H$ cannot be a RAAG. Therefore, we have proved that~$A(P_4)$ contains finitely generated subgroups that are not T-RAAGs.

\subsection{Non-special mixed graphs}

Next, we consider the graphs from \Cref{ssec:special graphs} and \autoref{ex:Lambda special}.  
In fact, it suffices to prove the result for the following three graphs, which we denote by~$\Delta$,~$L$, and~$\Lambda_{\mathrm{s}}$, respectively:
\begin{equation*}
  \xymatrix@R=1.5pt{ 
    x && y \\ 
    \bullet && \circ\ar@{-}@/_1pc/[ll] \\ \\
    &\circ\ar[luu]\ar@{-}[ruu]& \\ 
    &z&
  }
  \qquad\qquad\qquad
  \xymatrix@R=1.5pt{ 
    y & x & z \\  
    \circ\ar@{-}[r] & \bullet & \circ\ar[l] 
  }
  \qquad\qquad\qquad
  \xymatrix@R=1.5pt{ 
    \circ\ar[ddr] && \circ\ar[ddl] \\ 
    y && z\\ 
    &\bullet& \\ 
    &x&
  }
\end{equation*}

Indeed, all T-RAAGs based on the triangle graphs in \eqref{eq:triangle torsion proof}--\eqref{eq:triangle notorsion proof} contain the T-RAAG~$T(\Delta)$, by appropriately squaring some of the vertices in those graphs, via \autoref{proposition: subgraph injectivity}.  
Likewise, an analogous argument applies to the second line graph in \eqref{eq:triangle open proof}, whose corresponding T-RAAG contains~$T(L)$.

Now, in the T-RAAG
\[
T(\Delta) = \langle x, y, z \mid xy = yx,\ yz = zy,\ zxz = x \rangle,
\]
consider the subgroup~$H = \langle x, t \rangle$, where~$t = yz$. If~$H$ is a T-RAAG~$T(\Gamma)$ for some mixed graph~$\Gamma$, then~$|V\Gamma| = 2$. 

Since~$T(\Delta)$ is commensurable with~$\Z^3$ (the subgroup~$N = \langle x^2, y, z \rangle$ is normal of index~$2$ and is isomorphic to~$\Z^3$), it follows that~$H$ must be commensurable with~$\Z^2$. Thus,~$\Gamma$ must be either an undirected edge or a directed one; that is,~$T(\Gamma) \cong \Z^2$ or~$T(\Gamma) \cong K$, where~$K$ denotes the fundamental group of the Klein bottle.

Consider~$t^x = x t x^{-1}$. Substituting~$t = yz$, we compute:
\[
t^x = x y z x^{-1} = y x z x^{-1} = y z^{-1}.
\]
Hence,~$x,\ yz,\ yz^{-1} \in H$, which implies that~$\langle x^2, y^2, z^2 \rangle \cong \Z^3$ is a subgroup of~$H = T(\Gamma)$. This contradicts the assumption that~$T(\Gamma) \cong \Z^2$ or~$T(\Gamma) \cong K$.

Next, in the T-RAAG
\[
T(L) = \langle x, y, z \mid xy = yx,\ zxz = x \rangle,
\]
consider (as above) the subgroup~$H = \langle x, t \rangle$, where~$t = yz$. If~$H$ is a T-RAAG~$T(\Gamma)$ for some mixed graph~$\Gamma$, then~$|V\Gamma| = 2$.

As above, we compute
\[
t^x = x t x^{-1} = x y z x^{-1} = y x z x^{-1} = y z^{-1}.
\]
Hence,~$x,\ yz,\ yz^{-1} \in H$, which implies that~$\langle x^2, y^2, z^2 \rangle \cong \Z \times F_2$ is a subgroup of~$H = T(\Gamma)$. 

In particular,~$\Gamma$ cannot consist of two disjoint vertices, as this would make~$H$ a free group, and hence could not contain a subgroup isomorphic to~$\Z \times F_2$. Thus, either~$T(\Gamma) \cong \Z^2$ or~$T(\Gamma) \cong K$; but neither of these groups contains free subgroups, and therefore cannot contain a subgroup isomorphic to~$\Z \times F_2$.

\subsection{The mixed graph~$\Lambda_{\mathrm s}$}

Finally, in the T-RAAG
\[
T(\Lambda_{\mathrm{s}}) = \langle x, y, z \mid yxy = x,\ zxz = x \rangle,
\]
consider the subgroup~$H = \langle x, t \rangle$, where~$t = yz$. If~$H$ is a T-RAAG~$T(\Gamma)$ for some mixed graph~$\Gamma$, then~$|V\Gamma| = 2$. Note that the subgroup~$\langle y, z \rangle$ is a free group of rank~$2$. 

As above, we compute:
\[
t^x = x t x^{-1} = x y z x^{-1} = y^{-1} z^{-1}.
\]
Hence,~$yz,\ y^{-1} z^{-1} \in H$. Since~$\langle y, z \rangle$ is free, the subgroup~$\langle yz, y^{-1} z^{-1} \rangle$ is also free, which implies that~$\Gamma$ must consist of two disjoint vertices.

However, this leads to a contradiction, since~$x^2, t \in H$, and~$\langle x^2, t \rangle \cong \Z^2$, which cannot occur in a free group. 

Consequently,~$T(\Lambda_{\mathrm{s}})$ contains a finitely generated subgroup that is not a T-RAAG.

This completes the argument that each of the graphs identified above gives rise to T-RAAGs containing finitely generated subgroups which are not themselves T-RAAGs.

\section{Elementary T-RAAGs}\label{sec: Elementary T-RAAGs}

In this section we prove the implication (ii)$\Rightarrow$(i) of~\autoref{thm:subgroups intro}, using the recursive description of T-RAAGs based on Droms mixed graphs given by~\autoref{cor:elementary}.

In fact, we prove the following slightly stronger result.

\begin{proposition}\label{prop:droms subgroups}
 Let~$\Gamma$ be a Droms mixed graph, and let~$\theta\colon T(\Gamma)\to\Z^\times$ be a signature.
 Then every finitely generated subgroup~$H$ of~$T(\Gamma)$ is again a T-RAAG, and moreover the restriction~$\theta\vert_H\colon H\to\Z^\times$ is a signature of the T-RAAG~$H$.
\end{proposition}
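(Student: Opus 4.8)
The plan is to induct on the number of vertices of $\Gamma$, using the recursive structure of Droms mixed graphs provided by \autoref{prop: droms elementary type}: either $\Gamma$ decomposes as a disjoint union $\Gamma = \Gamma_1 \sqcup \Gamma_2$, in which case $T(\Gamma) = T(\Gamma_1) * T(\Gamma_2)$ is a free product, or $\Gamma = \nabla_{\theta_0}(\Gamma_0)$ is a cone, in which case $T(\Gamma) = \langle w \rangle \rtimes_\theta T(\Gamma_0) \cong \Z \rtimes T(\Gamma_0)$. The base case of an isolated vertex (where $T(\Gamma) \cong \Z$, and every subgroup is $\Z$ or trivial, a T-RAAG on $\leq 1$ vertex) is immediate, and one checks that the restricted signature is still a signature there. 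For the inductive step I must produce, from a finitely generated $H \leq T(\Gamma)$, a Droms mixed graph $\Gamma_H$ with $H \cong T(\Gamma_H)$, and verify $\theta\vert_H$ is a signature of it.

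The free-product case should be handled by the Kurosh subgroup theorem: a finitely generated subgroup $H$ of $T(\Gamma_1) * T(\Gamma_2)$ is a free product $H \cong F * (*_i\, H_{1,i}) * (*_j\, H_{2,j})$ where $F$ is free of finite rank and each $H_{1,i}$ (resp.\ $H_{2,j}$) is a finitely generated subgroup of a conjugate of $T(\Gamma_1)$ (resp.\ $T(\Gamma_2)$). By induction each $H_{1,i}, H_{2,j}$ is a T-RAAG over a Droms mixed graph, and a free factor $\Z$ for each generator of $F$ is a T-RAAG over an isolated vertex; the disjoint union of all these defining graphs is a Droms mixed graph (a disjoint union of Droms mixed graphs is Droms, since $\mathrm{P}_4, \mathrm{C}_4, \Lambda_s$ are all connected), and its T-RAAG is the free product, i.e.\ $H$. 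For the signature: conjugation does not change whether $\theta$ restricts to a signature, each restricted piece $\theta\vert_{H_{k,i}}$ is a signature by induction, and on the free factor $\Z$ any map to $\Z^\times$ is a signature of an isolated vertex; these assemble by the universal property of free products to the required signature of $T(\Gamma_H) = H$.

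The cone case is where I expect the real difficulty. Here $T(\Gamma) = \Z \rtimes_\theta T(\Gamma_0)$ with $w$ generating the normal $\Z$, and $x^{-1} w x = w^{\theta(x)}$. Given $H$ finitely generated, consider the intersection $H \cap \langle w \rangle$ and the image $\bar H$ of $H$ in $T(\Gamma_0)$ under the retraction $T(\Gamma) \to T(\Gamma_0)$. There is a short exact sequence $1 \to H \cap \langle w \rangle \to H \to \bar H \to 1$, where $\bar H$ is finitely generated, hence a T-RAAG over a Droms mixed graph $\Gamma_0'$ by induction, and $H \cap \langle w\rangle$ is either trivial or infinite cyclic $\langle w^n \rangle$. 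If $H \cap \langle w \rangle$ is trivial then $H \cong \bar H$ is already a T-RAAG (one checks $\theta\vert_H$ corresponds to $(\theta_0)\vert_{\bar H}$, still a signature). If $H \cap \langle w \rangle = \langle w^n \rangle \neq 1$, then $H = \langle w^n \rangle \rtimes \bar H$ with $\bar H$ acting through $\theta$; I want to realize this as a cone $\nabla_{\theta\vert_{\bar H}}(\Gamma_0')$. The subtle point is that this is a \emph{cone}, requiring $w^n$ to be joined to \emph{every} vertex of $\Gamma_0'$ with a directed edge precisely at the negative vertices — i.e.\ I need the generating set of $\bar H = T(\Gamma_0')$ to consist of elements each of which either centralizes $w^n$ or inverts it, matching the signature; and I must confirm that adjoining the cone tip keeps the graph Droms (a cone over a Droms graph is Droms, which follows from \autoref{prop: droms elementary type} applied in reverse, or from the explicit conditions in \autoref{def:droms}, noting that coning cannot create $\mathrm{P}_4$, $\mathrm{C}_4$, or $\Lambda_s$ since the tip is adjacent to everything). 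Checking that the action of the chosen T-RAAG generators of $\bar H$ on $w^n$ is exactly by $\pm 1$ according to $\theta$ — and that $\theta\vert_H$ is then the signature of the cone with $\theta(w^n) = 1$ — is the technical heart, and I would isolate it as a lemma about how finitely generated subgroups of $\Z \rtimes_\theta G$ decompose when the $\Z$-part survives.
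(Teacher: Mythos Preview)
Your outline matches the paper's proof exactly: induction via the recursive description of Droms mixed graphs (\autoref{prop: droms elementary type}), Kurosh for the free-product case, and the short exact sequence $1 \to H\cap\langle w\rangle \to H \to \bar H \to 1$ in the cone case.

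The one point that deserves more care is the assertion ``then $H = \langle w^n\rangle \rtimes \bar H$''. This splitting is not automatic, and it --- rather than the action --- is the technical heart. The paper constructs an explicit section by choosing, for each T-RAAG generator $v_i$ of $\bar H = T(\Lambda)$, a lift $h_i = v_i w^{n_i}\in H$, and then checking by hand that $v_i\mapsto h_i$ respects both the commutation and the Klein relations of $T(\Lambda)$. That verification is precisely where the inductive hypothesis that $\theta\vert_{\bar H}$ is a \emph{signature} enters: for an undirected edge $[v_i,v_j]$ one needs $\theta(v_i)=\theta(v_j)=1$ so that $w$ commutes with both lifts, and for a directed edge $[v_i,v_j\rangle$ one needs $\theta(v_i)=1$, $\theta(v_j)=-1$ for the Klein relation to survive. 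So the signature is carried through the induction not merely to describe the action of $\bar H$ on $\langle w^n\rangle$ (which is automatically via $\theta$, as you note), but to guarantee that the chosen lifts actually define a homomorphism $\bar H\to H$. Once the section exists, the rest of your argument --- identifying $H$ with $T(\nabla_{\theta\vert_{\bar H}}(\Lambda))$ when $n\neq 0$, checking $\tilde\theta\vert_H$ is a signature --- goes through exactly as you describe.
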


First, we observe that for the "building block" of elementary T-RAAGs -- namely,~$\Z$ considered as the T-RAAG based on a single vertex --~\autoref{prop:droms subgroups} holds trivially.

Then, we show that the property prescribed by~\autoref{prop:droms subgroups} is preserved by the two operations involved in the construction of elementary T-RAAGs: free products, and semi-direct products.


\subsection{Free products}

If~$\Gamma = \Gamma_1 \sqcup \Gamma_2$, then~$T(\Gamma) = T(\Gamma_1) \ast T(\Gamma_2)$, and the following result follows from the Kurosh Subgroup Theorem, and from the universal property of free products.

\begin{proposition}\label{prop:free prod}
Let~$\Gamma = \Gamma_1 \sqcup \Gamma_2$ be a disconnected mixed graph, and let~$\theta_i\colon T(\Gamma_i)\to\Z^\times$ be a signature, for~$i=1,2$.
If every finitely generated subgroup~$H_i$ of~$T(\Gamma_i)$ is a T-RAAG, and the restriction~$\theta\vert_{H_i}\to\Z^\times$ is a signature, then every finitely generated subgroup~$H$ of~$T(\Gamma)$ is a T-RAAG, and the signature~$\theta\colon T(\Gamma)\to\Z^\times$ induced by~$\theta_1,\theta_2$ yields a signature~$\theta\vert_H\colon H\to\Z^\times$ of~$H$.
\end{proposition}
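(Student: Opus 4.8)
The plan is to invoke the Kurosh Subgroup Theorem directly. Since $\Gamma = \Gamma_1 \sqcup \Gamma_2$ is disconnected, we have $T(\Gamma) = T(\Gamma_1) \ast T(\Gamma_2)$, and by Kurosh, any finitely generated subgroup $H \leq T(\Gamma_1)\ast T(\Gamma_2)$ decomposes as a free product
\[
 H = F \ast H_1^{(1)} \ast \cdots \ast H_1^{(k)} \ast H_2^{(1)} \ast \cdots \ast H_2^{(\ell)},
\]
where $F$ is a (finitely generated) free group, and each $H_i^{(j)}$ is (conjugate into and hence isomorphic to) a subgroup of $T(\Gamma_i)$. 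Because $H$ is finitely generated, finitely many such factors appear, and moreover each $H_i^{(j)}$ is itself finitely generated: this is the standard consequence of Kurosh together with Grushko's theorem (the free rank plus the number of free factors is bounded), so one can cite it or argue it in a line.

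The next step is to observe that each $H_i^{(j)}$, being a finitely generated subgroup of $T(\Gamma_i)$, is by hypothesis a T-RAAG, say $H_i^{(j)} = T(\Delta_i^{(j)})$ for some mixed graph $\Delta_i^{(j)}$, and the restriction $\theta_i|_{H_i^{(j)}}$ is a signature of it. Likewise $F$ is a free group, hence a T-RAAG based on a mixed graph with no edges (finitely many isolated vertices), and any map to $\Z^\times$ is a signature in that case. Now form the mixed graph $\Delta$ as the disjoint union of all the $\Delta_i^{(j)}$ together with the isolated-vertex graph underlying $F$. By the discussion in Subsection~\ref{ssec:operations}, $T(\Delta)$ is precisely the free product of these T-RAAGs, which is exactly the Kurosh decomposition of $H$ above; hence $H \cong T(\Delta)$ is a T-RAAG.

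It remains to check the signature claim. The signature $\theta\colon T(\Gamma)\to\Z^\times$ is the one induced by $\theta_1,\theta_2$ via the universal property of free products, so its restriction to each free factor of $H$ in the Kurosh decomposition agrees (up to the conjugation identifying $H_i^{(j)}$ with a subgroup of $T(\Gamma_i)$, which does not affect the composite to the abelian group $\Z^\times$) with $\theta_i|_{H_i^{(j)}}$, a signature of $T(\Delta_i^{(j)})$; and on $F$ it is some map to $\Z^\times$, which is automatically a signature for an edgeless graph. Since a signature on a disjoint union of mixed graphs is exactly a tuple of signatures on the pieces (again by the universal property, as recalled in Subsection~\ref{ssec:operations}), $\theta|_H$ is a signature of $T(\Delta)$, as required.

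The only mild subtlety — and the one point I would be careful to state rather than gloss — is the finite generation of the Kurosh factors $H_i^{(j)}$, since a priori Kurosh only guarantees they are subgroups of the $T(\Gamma_i)$; this is where one uses that $H$ is finitely generated (invoking Grushko, or the fact that a finitely generated group has only finitely many free factors in any free decomposition, each necessarily finitely generated). Everything else is a bookkeeping matching of the Kurosh decomposition of $H$ with the disjoint-union/free-product correspondence for T-RAAGs, together with the trivial observation that a free group is a T-RAAG on isolated vertices for which every map to $\Z^\times$ is a signature.
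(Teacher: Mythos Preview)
Your proposal is correct and is precisely the approach the paper indicates: the paper does not give a written-out proof at all, but simply says the result ``follows from the Kurosh Subgroup Theorem, and from the universal property of free products.'' You have faithfully expanded exactly that sketch, including the one genuine subtlety (finite generation of the Kurosh factors via Grushko) that the paper leaves implicit.
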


\subsection{Semi-direct products}

\begin{proposition}
Let~$\Gamma = (V, E, D,o,t)$ be a mixed graph, and let~$\theta \colon T(\Gamma) \to \mathbb{Z}^\times$ be a signature. Suppose that for every finitely generated subgroup~$H_0$ of~$T(\Gamma)$, the group~$H_0$ is a T-RAAG associated to some graph~$\Lambda$, and that the restriction
\[
\theta\vert_{H_0} \colon H_0 = T(\Lambda) \longrightarrow \mathbb{Z}^\times
\]
is a signature. Then every finitely generated subgroup~$H$ of the T-RAAG associated to the cone~$\nabla_\theta(\Gamma)$ is a T-RAAG associated to some graph~$\tilde\Lambda$.
Moreover, if~$\tilde\theta \colon T(\nabla_\theta(\Gamma)) \to \mathbb{Z}^\times$ is the signature induced by~$\theta$, then the restriction
\[
\tilde\theta\vert_H \colon H = T(\tilde\Lambda) \longrightarrow \mathbb{Z}^\times
\]
is a signature of~$H$.
\end{proposition}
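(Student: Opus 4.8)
The plan is to analyse the semidirect product structure $T(\nabla_\theta(\Gamma)) = \langle w \rangle \rtimes_\theta T(\Gamma) = \Z \rtimes_\theta T(\Gamma)$, where $w$ is the tip and $x^{-1}wx = w^{\theta(x)}$, and to understand an arbitrary finitely generated subgroup $H$ via its image under the retraction $\pi\colon T(\nabla_\theta(\Gamma)) \to T(\Gamma)$ with kernel $\langle w \rangle$. Write $H_0 = \pi(H) \le T(\Gamma)$; since $H$ is finitely generated, so is $H_0$, and by hypothesis $H_0 = T(\Lambda)$ for some mixed graph $\Lambda$, with $\theta\vert_{H_0}$ a signature. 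Set $N = H \cap \langle w \rangle$, a subgroup of $\Z$, so either $N = 1$ or $N = \langle w^k \rangle \cong \Z$ for some $k \ge 1$. We have a short exact sequence $1 \to N \to H \to H_0 \to 1$. I would split into these two cases.

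If $N = 1$ the projection $\pi$ restricts to an isomorphism $H \xrightarrow{\sim} H_0 = T(\Lambda)$, and the restriction of $\tilde\theta$ to $H$ is the pullback of $\theta\vert_{H_0}$, hence a signature of $T(\Lambda)$; so we take $\tilde\Lambda = \Lambda$ and are done. If $N = \langle w^k \rangle \cong \Z$, then $N$ is normal in $H$ (being the intersection of $H$ with a normal subgroup), $H/N \cong H_0 = T(\Lambda)$, and the conjugation action of $H$ on $N \cong \Z$ factors through $H_0 \xrightarrow{\theta\vert_{H_0}} \Z^\times$ (because $x^{-1}w^k x = w^{k\theta(x)}$, the exponent $k$ being preserved since $\theta(x) = \pm 1$). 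The extension $1 \to \Z \to H \to T(\Lambda) \to 1$ need not split a priori, but it does: choose for each generator $v$ of $\Lambda$ a preimage $h_v \in H$; since $H \le \Z \rtimes T(\Gamma)$, each $h_v = w^{m_v} \cdot \hat v$ for the unique lift $\hat v \in T(\Gamma)$ of $v$, and one can replace $h_v$ by $w^{-m_v} h_v \cdots$ — more carefully, because $H$ is generated by $N$ together with lifts of the generators of $H_0 = T(\Lambda)$, and $\Z$ is central-modulo-the-$\pm1$-action, a Reidemeister–Schreier / direct computation shows $H \cong \langle w^k \rangle \rtimes_{\theta\vert_{H_0}} T(\Lambda) = T(\nabla_{\theta\vert_{H_0}}(\Lambda))$. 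Thus $\tilde\Lambda = \nabla_{\theta\vert_{H_0}}(\Lambda)$, which is a genuine mixed graph since $\theta\vert_{H_0}$ is a signature of $T(\Lambda)$ by hypothesis; and $\tilde\theta\vert_H$ restricts to the signature of this cone extending $\theta\vert_{H_0}$ and sending $w^k \mapsto 1$.

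The main obstacle is the splitting of the extension $1 \to \langle w^k \rangle \to H \to T(\Lambda) \to 1$ and, relatedly, verifying that $H$ is generated by $w^k$ together with honest coset representatives that themselves lie in a complement isomorphic to $T(\Lambda)$: one must rule out the possibility that the generators of $H$, written in the form $w^{m}\hat v$, have "fractional" $w$-parts that obstruct a section. I would handle this by exploiting that the action of $H_0$ on $\langle w \rangle$ has image in $\{\pm 1\}$, so $\langle w \rangle$ is a virtually central $\Z$ in $H$; combined with the fact (from \autoref{proposition: subgraph injectivity} and normal forms in T-RAAGs) that elements of $T(\nabla_\theta(\Gamma))$ have a unique normal form $w^m g$ with $g \in T(\Gamma)$, the subgroup $H$ is determined by the pair $(N, H_0)$ together with a cocycle $H_0 \to \Z/N$; and one checks this cocycle can be trivialised after adjusting the lifts by elements of $N$, because $\Z$ (and hence $\Z/N$) has no nontrivial finite-index-problem here — the $\theta$-twisted cohomology $H^2(T(\Lambda); \Z_\theta)$ vanishes for the relevant reason, or more elementarily because $T(\Lambda)$ has a presentation all of whose relators, when lifted, already close up in $H$. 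Once the splitting is in hand, identifying $H$ with $T(\nabla_{\theta\vert_{H_0}}(\Lambda))$ and checking the signature claim is routine.
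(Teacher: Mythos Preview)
Your overall strategy coincides with the paper's: use the semidirect product
$T(\nabla_\theta(\Gamma)) = \langle w\rangle \rtimes_\theta T(\Gamma)$, project $H$ onto
$H_0=\pi(H)=T(\Lambda)$, and show that the resulting extension
$1\to H\cap\langle w\rangle\to H\to T(\Lambda)\to 1$ splits, giving
$H\cong (H\cap\langle w\rangle)\rtimes_{\theta|_{H_0}} T(\Lambda)$, hence a cone (or $T(\Lambda)$ itself
when the kernel is trivial).

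The gap is precisely where you flag it: the splitting. Your first move --- replace $h_v=w^{m_v}\hat v$
by $w^{-m_v}h_v$ --- fails because $w^{-m_v}$ need not lie in $H$; your fallback to
$H^2(T(\Lambda);\Z_\theta)=0$ is asserted without proof; and the phrase ``relators, when lifted,
already close up'' names the right mechanism but never verifies it. The paper's point is that
\emph{no adjustment is needed}: for an \emph{arbitrary} choice of lifts
$h_i=v_iw^{n_i}\in H$ of the generators $v_i$ of $\Lambda$, the assignment
$\sigma(v_i)=h_i$ already extends to a homomorphism $T(\Lambda)\to H$. This is a two-line
check of each relation type, and it is exactly here that the signature hypothesis on
$\theta|_{H_0}$ does its work. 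If $[v_i,v_j]\in E_\Lambda\smallsetminus D_\Lambda$ then, because
$\theta|_{H_0}$ is a signature of $\Lambda$, one has $\theta(v_i)=\theta(v_j)=1$, so $w$ commutes with
both $v_i$ and $v_j$ in $G$, and $v_iw^{n_i}$ and $v_jw^{n_j}$ commute. If
$[v_i,v_j\rangle\in D_\Lambda$ then $\theta(v_i)=1$, $\theta(v_j)=-1$, and a direct computation gives
$(v_iw^{n_i})^{(v_jw^{n_j})}=(v_iw^{n_i})^{-1}$. Since $\pi\circ\sigma=\mathrm{id}$, $\sigma$ is a
section and the extension splits. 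Your worry about ``fractional $w$-parts'' is therefore misplaced:
the exponents $n_i$ are irrelevant, and the splitting holds uniformly, with no need to separate
the cases $N=1$ and $N\neq 1$.
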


\begin{proof}
Let~$G_0 = T(\Gamma)$ and~$G = T(\nabla_\theta(\Gamma))$. Also, let~$w$ be the tip of the cone~$\nabla(\Gamma)$, and define~$Z = \langle w \rangle$. Let~$\pi \colon G \to G_0$ denote the canonical projection. This induces the following short exact sequence:
\begin{equation}
\begin{tikzcd}
1 \arrow{r} & Z \arrow{r} & G \arrow{r}{\pi} & G_0 \arrow{r} & 1
\end{tikzcd}
\end{equation}

Thus, we have~$G = Z \rtimes_\theta G_0$, and~$\pi \vert_{G_0} = \text{id}_{G_0}$. It follows that for every~$g \in G$, 
\begin{equation}\label{eq:semidirectprop pi g}
g^{-1} \cdot \pi(g) \in \Ker(\pi) = Z.
\end{equation}

Now, let~$H \leq G$ be a finitely generated subgroup, and consider the restriction~$\pi\vert_H \colon H \to G_0$. Clearly,~$\pi(H)$ is a finitely generated subgroup of~$G_0$, and by hypothesis, it is a T-RAAG~$T(\Lambda)$, associated with a mixed graph~$\Lambda = (V_{\Lambda}, E_\Lambda, D_\Lambda, o_\Lambda, t_\Lambda)$.

Let~$V_\Lambda = \{v_1, \dots, v_r\}$. Then for each~$i = 1, \dots, r$, there exists~$h_i \in H$ such that~$v_i = \pi(h_i)$, and by \Cref{eq:semidirectprop pi g}, we have 
\[
h_i^{-1}v_i = w^{-n_i} \quad \Longrightarrow \quad h_i = v_i w^{n_i}, \qquad \text{for some } n_i \in \mathbb{Z}.
\]

\textbf{Claim.} The assignment~$\sigma(v_i) = v_i w^{n_i}$ for each~$i = 1, \dots, r$ extends to a group homomorphism~$\sigma \colon \pi(H) = T(\Lambda) \to H$. This would provide a split exact sequence:
\begin{equation}
\begin{tikzcd}
1 \arrow{r} & H \cap Z \arrow{r} & H \arrow{r}{\pi} & \pi(H) \arrow{r} \arrow[bend left=33, dashed]{l}{\sigma} & 1
\end{tikzcd}
\end{equation}

\textit{Proof of Claim.} Let~$v_i, v_j$ be a pair of joined vertices of~$\Lambda$. There are two cases to consider:
\begin{itemize}
 \item[(a)] If~$[v_i, v_j] \in E_\Lambda$, then~$v_i v_j = v_j v_i$, and moreover,~$\theta(v_i) = \theta(v_j) = 1$. Since~$\theta\vert_{\pi(H)} \colon \pi(H) = T(\Lambda) \to \mathbb{Z}^\times$ is a signature by hypothesis, we have~$w^{v_i} = w^{v_j} = w$. Thus,
 \[
  \sigma(v_i) \sigma(v_j) = (v_i w^{n_i})(v_j w^{n_j}) = v_j w^{n_j} v_i w^{n_i} = \sigma(v_j) \sigma(v_i),
 \]
 showing that~$\sigma$ respects the commutation relations.
 \item[(b)] If~$[v_i, v_j\rangle \in D_\Lambda$, then~$v_i^{v_j} = v_i^{-1}$. Moreover,~$\theta(v_i) = 1$ while~$\theta(v_j) = -1$. Hence,~$w^{v_i} = w$ and~$w^{v_j} = w^{-1}$. We have
 \[
 \begin{split}
  \sigma(v_i)^{\sigma(v_j)} & = (v_j w^{n_j}) v_i w^{n_i} (v_j w^{n_j})^{-1} \\
  & = w^{-n_j} v_j v_i w^{n_i} w^{-n_j} v_j^{-1} \\
  & = w^{-n_j} v_i^{-1} v_j w^{n_i - n_j} v_j^{-1} \\
  & = w^{-n_j} v_i^{-1} w^{n_j - n_i} \\
  & = w^{-n_i} v_i^{-1} = (v_i w^{n_i})^{-1}  \\
  & = \sigma(v_i)^{-1}.
 \end{split}
\]
Thus,~$\sigma$ respects the inversion relations.
\end{itemize}
This completes the proof of the claim.

Next, we show that~$\sigma$ is a monomorphism. Indeed, the composition of homomorphisms
\[
 \xymatrix@R=1.5pt{&H\ar[rd]^-{\pi\vert_H}& \\ \pi(H)=T(\Lambda)\ar@/_/[rr]\ar[ru]^-{\sigma} & & \pi(H)=T(\Lambda)}
\]
is the identity on~$\pi(H)$, since for every~$i = 1, \dots, r$ we have
$$\pi \circ \sigma(v_i) = \pi(v_i w^{n_i}) = \pi(v_i) = v_i,$$
as~$\pi(H) \leq G_0$ and~$\pi\vert_{G_0}$ is the identity of~$G_0$.

Therefore,~$\sigma(T(\Lambda))$ is a subgroup of~$H$ isomorphic to the T-RAAG~$T(\Lambda)$. Moreover, this proves that
$\sigma(T(\Lambda)) \cap \Ker(\pi \vert_H)$ is trivial. Hence, we can write
$$H = (H \cap Z) \rtimes \sigma(T(\Lambda)),$$
since~$H \cap Z = \Ker(\pi \vert_H)$ and~$\sigma(T(\Lambda)) \simeq \pi(H)$.

Finally, pick~$n \in \mathbb{Z}$ such that~$w^n$ generates~$H \cap Z$. Observe that~$\tilde{\theta} \vert_{\sigma(T(\Lambda))} \colon \sigma(T(\Lambda)) \to \mathbb{Z}^\times$ is a signature, as
\[
 \tilde{\theta}(\sigma(v_i)) = \tilde{\theta}(v_i w^{-n_i}) = \theta(v_i) \cdot 1 \quad \text{for every } i = 1, \dots, r,
\]
since~$\tilde{\theta}(w) = 1$ by definition, and~$\theta \vert_{\pi(H)}$ is a signature. Moreover,
\[
 \left(w^n\right)^{\sigma(v_i)} = \left(w^{v_i w^{-n_i}}\right)^n = \left(w^{\theta(v_i)}\right)^n = \left(w^n\right)^{\tilde{\theta}(\sigma(v_i))}.
\]
This shows that~$H$ is isomorphic to the T-RAAG associated to the cone~$\nabla_{\theta \vert_{\pi(H)}}(\Lambda)$ if~$n \neq 0$, and is isomorphic to~$T(\Lambda)$ otherwise. In both cases,
$$\tilde{\theta} \vert_H = (\mathbf{1}, \tilde{\theta} \vert_{\sigma(T(\Lambda))}) \colon (H \cap Z) \rtimes \sigma(T(\Lambda)) \longrightarrow \mathbb{Z}^\times$$
is a signature.
\end{proof}


\section{Rigidity of T-RAAGs}\label{sec: Rigidity of T-RAAGs}

Recall from the Introduction that a mixed graph~$\Gamma=(V,E,D,o,t)$ is said to be rigid if the following occurs: if the associated T-RAAG~$T(\Gamma)$ is isomorphic to a T-RAAG~$T(\Gamma')$, associated to some mixed graph~$\Gamma'=(V',E',D',o',t')$, then~$\Gamma$ and~$\Gamma'$ are isomorphic as mixed graphs.

\begin{remark}\label{rem:simplicial rigidity}
By \citep{droms1987isomorphisms}, simplicial graphs are rigid, namely if~$\Gamma,\Gamma'$ are two simplicial graphs such that~$T(\Gamma)\simeq T(\Gamma')$, then~$\Gamma$ and~$\Gamma'$ are isomorphic as simplicial graphs.
\end{remark}

The goal of this section is to prove~\autoref{thm:isomorphism}.



\subsection{Satellites}
We give the formal definition of satellite of a special mixed graph.

\begin{definition}\label{defin:satellite}
 Let~$\Gamma=(V,E,D,o,t)$ be a special mixed graph, an let~$w\in V$ be a sinkhole.
 A satellite of~$w$ is a vertex~$v\in V$,~$v\neq w$, satisfying te following:
 \begin{itemize}
  \item[(i)]~$\{v,w\}\notin E$;
  \item[(ii)] there exists at least a vertex~$u\in V$ such that~$\{v,u\},\{w,u\}\in E$;
  \item[(iii)] for every~$v'\in V$ such that~$\{v,u\}\in E$, then also~$\{w,u\}\in E$. \end{itemize}
\end{definition}
Observe that we do not require a satellite to be a positive vertex, nor a sinkhole.

First, we prove statement~(i) of~\autoref{thm:isomorphism}.

\begin{proposition}\label{prop:iso 1}
 Let~$\Gamma=(V,E,D,o,t)$ be a special mixed graph.
 If~$\Gamma$ has a sinkhole with a satellite, then it is not rigid.
\end{proposition}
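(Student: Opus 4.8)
The plan is to exploit the construction used in the Introduction for the mixed graphs $\Lambda_1$ and $\Lambda_2$: a satellite produces two distinct mixed graphs (one where the satellite is ``pulled off'' the common neighbours and made to point into a sinkhole, and the original graph) that determine the same semidirect-product structure, hence isomorphic T-RAAGs. Concretely, let $w$ be a sinkhole of $\Gamma$ with satellite $v$. Write $U$ for the (nonempty, by (ii)) set of common neighbours of $v$ and $w$; by the satellite condition (iii), every neighbour of $v$ lies in $U$, so $U$ is exactly the neighbourhood of $v$, and since $\Gamma$ is special and $w$ is a sinkhole, every edge from $w$ is a directed edge pointing at $w$. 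The first step is to describe $T(\Gamma)$ as an HNN- or semidirect-type decomposition along $v$: since $v$ is joined only to vertices in $U$, and each such edge is either $[v,u]$ or $[v,u\rangle$, the subgroup generated by $V\smallsetminus\{v\}$ is normal (or at least $v$ acts on it), and $T(\Gamma)\cong \langle v\rangle \ltimes_{\psi} T(\Gamma\smallsetminus v)$ for the appropriate action $\psi$ determined by the signature restricted to the edges at $v$.

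The second step is to build the competing graph $\Gamma'$. Define $\Gamma'$ on the same vertex set by \emph{deleting} every edge incident to $v$ and instead adding a single directed edge $[v,w\rangle$ (so $v$ now points at the sinkhole $w$), keeping all other edges and directions of $\Gamma$ unchanged. One checks that $\Gamma'$ is again a special mixed graph: the only new negative vertex-incidence is at $w$, which was already a sinkhole, and $v$ has become an origin of a single directed edge, so no forbidden $3$-vertex configuration from \eqref{eq:triangle torsion proof}--\eqref{eq:triangle open proof} is created. The third step is the isomorphism $T(\Gamma)\cong T(\Gamma')$. Here the key point is that the Klein relation $vwv=w$ only records that $v$ acts on $w$ by inversion and commutes with nothing else it wasn't already commuting with; meanwhile, in $T(\Gamma)$, the element $v$ acts on $T(\Gamma\smallsetminus v)$ by the signature automorphism $\theta(v)$ — and because $v$ is a satellite of $w$, this automorphism agrees, on the relevant generators, with conjugation by $w^{\theta(v)=-1}$ up to a correction we can absorb. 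Precisely, one produces an explicit isomorphism $\varphi\colon T(\Gamma)\to T(\Gamma')$ fixing $V\smallsetminus\{v\}$ and sending $v\mapsto v\cdot c$ for a suitable central-ish correction word $c$ in $\langle w\rangle$ (exactly as $\Lambda_1\to\Lambda_2$ sends a generator to itself times a power of $b$), and checks it respects all defining relations; the inverse is constructed the same way. Finally, one must verify $\Gamma\not\cong\Gamma'$ as mixed graphs: $\Gamma$ has an edge between $v$ and some $u\in U$ while in $\Gamma'$ the vertex $v$ has valence $1$ with a directed edge to $w$, and — provided the satellite is genuinely ``off'' $w$ (condition (i)) and $U$ is chosen so no automorphism of the underlying simplicial graph can match these — the two mixed graphs are non-isomorphic; this non-isomorphism is what fails in degenerate small cases and should be handled by a short combinatorial argument on degrees and on the sinkhole/satellite incidence pattern.

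The main obstacle I expect is the \emph{verification that $\varphi$ is a well-defined isomorphism}, i.e. pinning down the correction word $c\in\langle w\rangle$ so that all relations $vuv^{\pm1}=\cdots$ (for $u\in U$, whether commuting or Klein) transport correctly to $\Gamma'$ simultaneously with the new relation $vwv=w$; this is the twisted-analogue of the $\Lambda_1\cong\Lambda_2$ computation but must be done with the signature bookkeeping of \Cref{def:signature} in force, and one has to be careful that the satellite hypothesis (iii) is exactly what guarantees $w$ sees every vertex that $v$ sees, so that conjugation by $w$ can simulate the action of $v$ on those vertices. A secondary, more bureaucratic obstacle is the non-isomorphism of $\Gamma$ and $\Gamma'$: one should argue it via an invariant preserved by mixed-graph isomorphism — e.g. the number of undirected edges, or the degree sequence of the underlying simplicial graph together with the positions of sinkholes — rather than by brute force, since an ad hoc check could miss a symmetric configuration.
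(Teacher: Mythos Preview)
The competing graph $\Gamma'$ you build is wrong, and this is not a matter of bookkeeping: no isomorphism can exist. Take $\Gamma=\Lambda_2$ from the Introduction, with sinkhole $w=a_1'$ and satellite $v=a_2'$. Your recipe deletes the edge $\{b',a_2'\}$ and adds $[a_2',a_1'\rangle$, producing exactly~$\Lambda_{\mathrm s}$; but $T(\Lambda_2)^{ab}\cong\Z^2\times\Z/2$ while $T(\Lambda_{\mathrm s})^{ab}\cong\Z\times(\Z/2)^2$. The obstruction is general: by \autoref{rem:no of tails} the $2$-torsion rank of the abelianisation counts origins of directed edges, and your construction always adds $v$ to that set (in $\Gamma$ the satellite $v$ is never an origin, since every neighbour of $v$ lies in $U$ and is therefore positive) while removing none. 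A related error: the decomposition $T(\Gamma)\cong\langle v\rangle\ltimes T(\Gamma\smallsetminus v)$ fails as soon as $v$ has a non-neighbour, and the satellite condition forces $w$ itself to be one. You have also misread the $\Lambda_1\leftrightarrow\Lambda_2$ example: the correction there is $a_2\mapsto a_2a_1$, multiplication by the \emph{sinkhole} $a_1$, not by the common neighbour $b$.

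The paper's construction keeps the underlying simplicial graph unchanged and merely flips the character of the edges at $v$. When $v$ is positive, every edge $\{u,v\}$ with $u\in U$ is undirected (each such $u$ is positive, being adjacent to the sinkhole $w$), and one replaces each $[u,v]$ by a directed $[u,v'\rangle$; the case where $v$ is itself a sinkhole reverses this. The map fixing $V\smallsetminus\{v\}$ and sending $v\mapsto vw$ is then an isomorphism $T(\Gamma)\to T(\Gamma')$ --- the verification uses only $uv=vu$ and $uwu=w$ for $u\in U$ --- and your ``secondary obstacle'' evaporates: $\Gamma\not\cong\Gamma'$ simply because $|D'|\neq|D|$. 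Your instinct that the isomorphism should be $v\mapsto v\cdot c$ with $c\in\langle w\rangle$ was exactly right; only the target graph was wrong.
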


\begin{proof}
Let~$w\in V$ be a sinkhole, and let~$v$ be a satellite of~$w$. One has two cases: either~$v$ is a positive vertex, or it is a sinkhole.
We construct another mixed graph~$\Gamma'=(V',E',D',o',t')$, not isomorphic to~$\Gamma$ as a mixed graph, in the former case; the construction for the latter case is analogue.

First, we observe that if~$u\in V$ is joined to~$v$, then~$u$ must be a positive vertex, as~$u$ is joined also to~$w$ and~$\Gamma$ is special.
Hence, the edge~$\{u,v\}$ is not directed.
Then we set~$v'=vw$, and moreover
\[
 \begin{split}
 V'&=(V\smallsetminus\{v\})\cup\{v'\}, \\
 E'&= (E\smallsetminus\{\:\{v,u\}\:\mid\:\{v,u\}\in E\:\})\cup\{\:\{v',u\}\:\mid\:\{v,u\}\in E\:\},\\
 D'&= D\cup \{\:[u,v'\rangle\:\mid\:\{v,u\}\in E\:\},
 \end{split}
\]
with~$o',t'\colon D'\to V'$ defined accordingly.
By the above observation, the mixed graph~$\Gamma'$ is special, as~$v'$ is a sinkhole, and every vertex joined to~$v'$ is positive.
Roughly speaking, we have "transformed" the positive vertex~$v$ into a sinkhole~$v'$; moreover, we underline that~$v'$ is a satellite of~$w$ in~$\Gamma'$.
Finally, the two mixed graphs~$\Gamma,\Gamma'$ are not isomorphic, as~$D'$ is strictly bigger than~$D$.

Now consider the map~$\psi\colon V\to V'$ given by~$\psi(v)= v'$,~$\psi(w)= w$, and~$\psi(u)=u$ for every~$u\in V$,~$u\neq v,w$.
We claim that it yields a homomorphism of groups~$\psi_T\colon T(\Gamma)\to T(\Gamma')$.
Indeed, for every edge~$\{u,u'\}\in E$ with~$u,u'\neq v$, clearly one has
\[
 \psi_T([u,u'])=[\psi(u),\psi(u')]\qquad\text{and}\qquad
 \psi_T([u,u'\rangle)=[\psi(u),\psi(u')\rangle,
\]
depending on whether~$\{u,u'\}$ is directed.
On the other hand, if~$\{v',u\}\in E$ for some~$u\in V'$, then~$[u,v]\in E$,~$[u,v'\rangle\in D'$, while~$[u,w\rangle$ lies in both~$D$ and~$D'$.
Then
\[
 [u,v']=[u,vw]=[u,w]\cdot[u,v]^w=[u,w]=u^{-2},
\]
and hence
\[
 \psi_T([u,v'\rangle)=\psi_T([u,w\rangle)=[\psi(u),\psi(w)\rangle=[\psi(u),\psi(v')\rangle.\qedhere
\]
\end{proof}



\subsection{Sinkholes joined to everything}

Next, we prove statement~(ii) of~\autoref{thm:isomorphism}.
First, we need the following.

\begin{lemma}\label{lemma:special TRAAG}
  Let~$\Gamma=(V,E,D,o,t)$ be a special mixed graph.
  If~$\Gamma'=(V',E',D',o',t')$ is a mixed graph and~$\phi\colon T(\Gamma')\to T(\Gamma)$ an isomorphism, then also~$\Gamma'$ is a special mixed graph.
\end{lemma}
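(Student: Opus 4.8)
The plan is to characterise "being special" for a T-RAAG in a purely algebraic way, i.e.\ in terms that are preserved by group isomorphisms, and then transport that characterisation along $\phi$. By \autoref{proposition:special}, $\Gamma$ is special if and only if $T(\Gamma)$ admits an epimorphism onto the T-RAAG $T(\Delta)$ of a complete special mixed graph $\Delta$ with the same number $n$ of vertices. The number $n$ is itself an isomorphism invariant of $T(\Gamma)$: indeed $n$ is the minimal size of a generating set, which can be read off the abelianisation via \autoref{rem:no of tails}, since $T(\Gamma)^{ab}\cong\Z^r\times(\Z/2\Z)^s$ with $r+s=n$ (the minimal number of generators of this abelian group is $n$, and no group can be generated by fewer than $n$ elements while $T(\Gamma)$ visibly is). So the plan is: first, record that $\phi\colon T(\Gamma')\to T(\Gamma)$ forces $\Gamma'$ and $\Gamma$ to have the same number of vertices $n$; second, use \autoref{proposition:special} to get an epimorphism $T(\Gamma)\twoheadrightarrow T(\Delta)$ with $\Delta$ complete special on $n$ vertices; third, precompose with $\phi$ to obtain an epimorphism $T(\Gamma')\twoheadrightarrow T(\Delta)$; fourth, invoke the converse direction of \autoref{proposition:special} to conclude $\Gamma'$ is special.

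In more detail, for the second and third steps: $\phi$ being an isomorphism, the composite $T(\Gamma')\xrightarrow{\phi}T(\Gamma)\twoheadrightarrow T(\Delta)$ is an epimorphism, and $\Delta$ is a complete special mixed graph with $n=|V'|$ vertices; this is exactly the hypothesis of the "if" direction of \autoref{proposition:special} applied to $\Gamma'$, whence $\Gamma'$ is special. The only thing genuinely needing care is the first step, the invariance of the vertex count. One clean way: the minimal number of generators of a finitely generated group is an isomorphism invariant, it is bounded below by the minimal number of generators of any quotient, in particular of the abelianisation, and $T(\Gamma)^{ab}\cong\Z^r\times(\Z/2\Z)^s$ needs exactly $r+s$ generators (rank of the free part plus $2$-rank); since $r+s=|V|$ by \autoref{rem:no of tails} and $T(\Gamma)$ is generated by its $|V|$ standard generators, the minimal number of generators of $T(\Gamma)$ is exactly $|V|$. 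Applying this to both $T(\Gamma)$ and $T(\Gamma')$ and using $T(\Gamma)\cong T(\Gamma')$ gives $|V|=|V'|=n$.

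The main obstacle is really just this bookkeeping about the number of vertices being an isomorphism invariant — everything else is a two-line formal consequence of \autoref{proposition:special} and \autoref{rem:no of tails}. A subtlety to watch: \autoref{proposition:special} is stated with the phrase "with the same number of vertices", so I must make sure the $\Delta$ produced for $T(\Gamma)$ has exactly $n$ vertices (it does, by construction in that proof) and that this is the $n$ I feed back in when applying the proposition to $\Gamma'$. No decomposition of $\Gamma'$ into free factors is needed a priori, since the "if" direction of \autoref{proposition:special} does not assume connectedness; I only need the bare existence of the epimorphism onto $T(\Delta)$. I would therefore keep the write-up to essentially one short paragraph: invariance of $n$, then the epimorphism chase.
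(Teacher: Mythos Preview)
Your proposal is correct and follows exactly the paper's approach: obtain from \autoref{proposition:special} an epimorphism $\psi_\Delta\colon T(\Gamma)\to T(\Delta)$ onto a complete special $T(\Delta)$ with $|V_\Delta|=|V|$, compose with $\phi$ to get an epimorphism $T(\Gamma')\to T(\Delta)$, and invoke the converse of \autoref{proposition:special}. The only difference is that the paper dispatches the equality $|V'|=|V_\Delta|$ with the word ``clearly'', whereas you spell out the abelianisation argument via \autoref{rem:no of tails}; your added detail is correct and exactly what underlies that ``clearly''.
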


\begin{proof}
Let~$\Delta=(V_\Delta,E_\Delta,D_\Delta,o_\Delta,t_\Delta)$ be a complete special mixed graph with~$|V|=|V_\Delta|$, and~$\psi_\Delta\colon T(\Gamma)\to T(\Delta)$ an epimorphism (cf.~\autoref{proposition:special}).
Then clearly~$|V'|=|V_\Delta|$, and also~$$\phi^{-1}\circ\psi_\Delta\colon T(\Gamma')\to T(\Delta)$$ is an epimorphism, so~$\Gamma'$ is a special graph by~\autoref{proposition:special}.
\end{proof}

\begin{proposition}\label{prop:iso 2}
 Let~$\Gamma=(V,E,D,o,t)$ be a special mixed graph.
If~$w\in V$ is a sinkhole such that~$[v,w\rangle\in D$ for every~$v\in V$,~$v\not= w$, then~$\Gamma$ is rigid.
\end{proposition}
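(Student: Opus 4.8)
The plan is to exploit the fact that $w$ being a sinkhole joined to everything gives $T(\Gamma)$ a canonical semidirect decomposition $T(\Gamma)=\langle w\rangle\rtimes_\theta T(\Gamma_0)$, where $\Gamma_0$ is the induced subgraph on $V\smallsetminus\{w\}$ and $\theta$ is the unique signature (cf.~the discussion of cones in \Cref{ssec:operations}, since $\Gamma=\nabla_\theta(\Gamma_0)$). Suppose $\phi\colon T(\Gamma)\to T(\Gamma')$ is an isomorphism. By \autoref{lemma:special TRAAG}, $\Gamma'$ is special, and $|V'|=|V|$. First I would identify $\langle w\rangle$ intrinsically inside $T(\Gamma)$: because $w$ is joined to every other vertex and every such edge is directed into $w$, the subgroup $\langle w\rangle\simeq\Z$ is normal, and $x^{-1}wx=w^{\pm1}$ for every generator $x$; so $\langle w^2\rangle$ (or $\langle w\rangle$ itself) lies in the centre-like part of $T(\Gamma)$. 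I would argue that $\langle w\rangle$ is the unique maximal normal cyclic subgroup with the property that the quotient by it is torsion-free-by-(a group with no larger such subgroup), or more concretely that $w$ generates (up to finite index / up to sign) the kernel of the abelianised signature combined with a centraliser computation; the upshot should be that $\phi$ carries $\langle w\rangle$ to the analogous canonical cyclic subgroup $\langle w'\rangle$ of $T(\Gamma')$, where $w'$ is forced to be a sinkhole of $\Gamma'$ joined to everything.

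Granting that, $\phi$ induces an isomorphism $\bar\phi\colon T(\Gamma_0)\to T(\Gamma_0')$ between the two complementary subgraphs, compatible with signatures. The second step is to run an induction on $|V|$: the base case is immediate, and in the inductive step I would want to conclude from $T(\Gamma_0)\simeq T(\Gamma_0')$ (with matching signatures) that $\Gamma_0\simeq\Gamma_0'$, hence that $\Gamma=\nabla_\theta(\Gamma_0)\simeq\nabla_{\theta'}(\Gamma_0')=\Gamma'$. The subtlety is that $\Gamma_0$ need not itself have a sinkhole joined to everything, so one cannot apply the proposition recursively as stated; instead I would either (a) prove a strengthened statement by induction that also records the signature data, reducing at each stage either to a sinkhole-joined-to-everything situation, to a free-product decomposition (handled by Kurosh / the rigidity of the free factors), or ultimately to the underlying-simplicial-graph case where \autoref{rem:simplicial rigidity} applies; or (b) observe that once $w$ is peeled off, $\overline{\Gamma_0}=\overline{\Gamma}\smallsetminus\{w\}$ and $\Gamma_0$ has no negative vertices at all (every negative vertex of $\Gamma_0$ would have to be a sinkhole of $\Gamma$ distinct from $w$, but then it is joined to $w$, forcing a directed edge between two sinkholes—still possible, so this needs care). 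The cleanest route is probably (a).

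The main obstacle I anticipate is the intrinsic characterisation of $\langle w\rangle$ inside $T(\Gamma)$—i.e.\ showing that any isomorphism $\phi$ must send $\langle w\rangle$ to the analogous subgroup of $T(\Gamma')$ and that $w'$ is a sinkhole joined to everything in $\Gamma'$. Abstract group-theoretic invariants that pin down a normal $\Z$ are delicate: the natural candidates are (1) $w$ acts on everything by $\pm1$-conjugation so $\langle w^2\rangle$ is central in a finite-index subgroup, and (2) $T(\Gamma)/\langle w\rangle\simeq T(\Gamma_0)$ has abelianisation of a controlled shape; combining the constraint "$\Gamma'$ is special with the same vertex count" from \autoref{lemma:special TRAAG} with a count of which quotients by normal cyclic subgroups give a T-RAAG should force $w'$ to be a sinkhole adjacent to all of $V'$. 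I would handle this by using \autoref{proposition:special}: both $T(\Gamma)$ and $T(\Gamma')$ surject onto the same complete special T-RAAG $T(\Delta)$, which is $\Z^{s}\rtimes\Z$ with the $\Z$-factor acting by $-1$; chasing $w$ through this surjection and comparing with the image of a hypothetical $w'$ should nail down the adjacency. Once the cyclic subgroup is matched, the rest is the bookkeeping of the induction described above, which I expect to be routine modulo setting up the stronger inductive hypothesis carefully.
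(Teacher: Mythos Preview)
Your opening structural claim is wrong, and this derails the whole plan. The graph $\Gamma$ is \emph{not} the cone $\nabla_\theta(\Gamma_0)$: in a cone the tip is a \emph{positive} vertex and is the origin of every directed edge it lies on, whereas here $w$ is a sinkhole and is the \emph{terminus} of every edge. Correspondingly, $\langle w\rangle$ is \emph{not} normal in $T(\Gamma)$: from $vwv=w$ one gets $wvw^{-1}=v^{-1}$, so conjugation by $w$ inverts each $v$ and hence $T(\Gamma_0)$ is the normal factor, with $T(\Gamma)=T(\Gamma_0)\rtimes\langle w\rangle$. Your computation ``$x^{-1}wx=w^{\pm1}$'' is simply false (one finds $v^{-1}wv=v^{-2}w$), so the whole programme of pinning down $\langle w\rangle$ as a canonical normal cyclic subgroup cannot start.

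The observation you need --- and which you nearly reach in your route (b) before wrongly dismissing it --- is that $\Gamma_0$ has \emph{no directed edges at all}. Indeed, if some $v\neq w$ were negative then, $\Gamma$ being special, $v$ would be a sinkhole; but the edge $\{v,w\}$ is $[v,w\rangle\in D$ by hypothesis, so it does not point into $v$, a contradiction. Hence $\Gamma_0$ is a simplicial graph and $T(\Gamma_0)$ is an honest RAAG. No induction is needed: once you know $T(\Gamma_0)\simeq T(\Gamma_0')$ with both simplicial, Droms' classical rigidity (\autoref{rem:simplicial rigidity}) finishes immediately.

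What the paper does to get there is also simpler than your centraliser/\autoref{proposition:special} route. Since every $v\neq w$ is the origin of a directed edge, $T(\Gamma)^{ab}\simeq(\Z/2\Z)^{n-1}\times\Z$. Any $\Gamma'$ with $T(\Gamma')\simeq T(\Gamma)$ is special by \autoref{lemma:special TRAAG}, and the abelianisation forces exactly $n-1$ of its vertices to be origins of directed edges; specialness then forces a unique sinkhole $w'$ joined to everything, so $\Gamma'_0$ is likewise simplicial. Finally, $T(\Gamma_0)$ is intrinsically characterised as the kernel of any surjection $T(\Gamma)\twoheadrightarrow\Z$ (its image in the abelianisation is the torsion part), so any isomorphism $\varphi$ carries $T(\Gamma_0)$ onto $T(\Gamma_0')$. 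This replaces your attempted characterisation of $\langle w\rangle$ --- which is not even normal --- by a one-line characterisation of the complementary normal subgroup.
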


\begin{proof}
 Let~$\Gamma_0$ be the induced subgraph of~$\Gamma$ whose vertices are all the vertices of~$\Gamma$ but~$w$.
Since~$\Gamma$ is special, every vertex of~$\Gamma$ different to~$w$ -- and thus, every vertex of~$\Gamma_0$ -- must be a positive vertex.
Hence,~$\Gamma_0$ has no directed edges, i.e., it is a simplicial graph, and the associated T-RAAG~$T(\Gamma_0)$ is, in fact, a RAAG.
Altogether, if~$\theta\colon T(\Gamma)\to\Z^\times$ is the signature of~$T(\Gamma)$ (observe that there is a unique possible signature as~$\Gamma$ is connected, see~\autoref{rem:signature graph}), then~$\theta(w)=-1$, the restriction~$\theta\vert_{T(\Gamma_0)}$ is constantly equal to 1, and
\begin{equation}\label{eq:T sinkcone}
T(\Gamma)=T(\Gamma_0)\rtimes_\theta \langle\:w\:\rangle.
\end{equation}

 Suppose that~$\Gamma'=(V',E',D',o',t')$ is a mixed graph whose associated T-RAAG~$T(\Gamma')$ is isomorphic to~$T(\Gamma)$.
 By~\autoref{lemma:special TRAAG}, also~$\Gamma'$ is special.
Put~$n=|V|$.
By \eqref{eq:T sinkcone},
 \[
  T(\Gamma')^{ab}\simeq T(\Gamma)^{ab}\simeq (\Z/2\Z)^{n-1}\times \Z.
 \]
Hence,~$\Gamma'$ has at least a sinkhole, and exactly~$n-1$ vertices of~$\Gamma'$ are origin of directed edges.
We claim that~$\Gamma'$ has precisely one sinkhole -- say,~$w'$ --, which is joined to every other vertex of~$\Gamma'$.
Indeed, if~$w_1',w_2'\in V'$ are sinkholes, then at least one of them must be the origin of a directed edge, contradicting the fact that~$\Gamma'$ is special.
Hence, all directed edges of~$\Gamma'$ have~$w'$ as terminal vertex, and since all vertices of~$\Gamma'$ but~$w'$ are origin of a directed edge,~$w'$ is joined to all other vertices.
Therefore, the induced subgraph~$\Gamma'_0$ of~$\Gamma'$ whose vertices are the~$n-1$ vertices origin of directed edges, is a simplicial graph, and
\[
 T(\Gamma')=T(\Gamma'_0)\rtimes_{\theta'}\langle\:w'\:\rangle,
\]
where~$\theta'\colon T(\Gamma')\to\Z^\times$ is the (only) signature of~$T(\Gamma')$.

Let~$\varphi\colon T(\Gamma)\to T(\Gamma')$ be an isomorphism, and~$\pi'\colon T(\Gamma')\to\langle w'\rangle$ the canonical projection.
Also, let
$$\bar\pi\colon T(\Gamma)\longrightarrow(\Z/2\Z)^{n-1}\times \Z\qquad\text{and}\qquad
\psi\colon(\Z/2\Z)^{n-1}\times \Z\longrightarrow \langle\:w'\:\rangle$$
be the epimorphisms induced respectively by~$T(\Gamma)^{ab}\simeq(\Z/2\Z)^{n-1}\times \Z$, and by~$\Z\simeq \langle\:w'\:\rangle$.
Then the diagram
\[
 \xymatrix{ T(\Gamma_0)\rtimes_{\theta'}\langle\:w\:\rangle \ar[rr]^-{\varphi}\ar[d]^-{\bar\pi}
&&  T(\Gamma'_0)\rtimes_{\theta'}\langle\:w'\:\rangle\ar[d]^-{\pi'}\\
 (\Z/2\Z)^{n-1}\times \Z \ar[rr]^-{\psi}&&
\langle\:w'\:\rangle}
\]
commutes.
Therefore,~$T(\Gamma_0)=\ker(\pi'\circ\phi)$, and hence~$\varphi(T(\Gamma_0))\subseteq T(\Gamma_0')$.
Since~$\varphi$ is an isomorphism, and since~$\Gamma_0$ and~$\Gamma_0'$ have the same number of vertices, one has that the restriction~$\varphi\vert_{T(\Gamma_0)}\colon T(\Gamma_0)\to T(\Gamma_0')$ is an isomorphism.
By~\autoref{rem:simplicial rigidity},~$\Gamma_0$ and~$\Gamma_0'$ are isomorphic as simplicial graphs.
Therefore,~$\Gamma$ and~$\Gamma'$ are isomorphic as mixed graphs.
\end{proof}


\subsection{Droms mixed graphs}

\begin{remark}\label{rem:droms iso subgroups}
Let~$\Gamma$ be a Droms mixed graph and let~$T(\Gamma)$ be the associated T-RAAG.
By~\autoref{thm:subgroups intro}, every finitely generated subgroup of~$T(\Gamma)$ is again a T-RAAG.
If~$T(\Gamma)\cong T(\Gamma')$ for some mixed graph~$\Gamma'$, then,
again by~\autoref{thm:subgroups intro},~$\Gamma'$ is also a Droms mixed graph, as every finitely generated subgroup of~$T(\Gamma')$ is a T-RAAG.
\end{remark}

Let~$\Gamma=(V,E,D,o,t)$ be a mixed graph, and let~$\theta\colon V\to\Z^\times$ be a signature.
For~$r\geq1$, we set the mixed graph~$\nabla^r_{\theta}(\Gamma)$ to be the result of~$r$ iterated cone obtained starting from~$\Gamma$.
For example, a special

In order to prove~\autoref{thm:isomorphism}--(iii) we need the following.

\begin{lemma}\label{lemma:normal abelian subgroup}
Let~$\Gamma=(V,E,D,o,t)$ be a Droms mixed graph with at least two vertices, and suppose that~$N \cong \mathbb{Z}^r$,~$r \geq 1$, is a maximal abelian normal subgroup of the associated T-RAAG~$T(\Gamma)$. Then, there exist vertices~$w_1, \ldots, w_r \in V$ such that~$\{w_i, v\} \in E$ for all~$v \in V \smallsetminus \{w_i\}$. Moreover, either:
\begin{enumerate}
    \item[(1)]~$\theta(w_i) = 1$ for all~$1 \leq i \leq r$,~$N = \langle w_1, \ldots, w_r \rangle$, and~$\Gamma=\nabla_{\theta_0}^r(\Gamma_0)$ for some induced subgraph~$\Gamma_0$ with signature~$\theta_0$, with~$w_1,\ldots,w_r$ the positive tips of the~$r$ iterated cones; or
    \item[(2)]~$r \geq 2$,~$\theta(w_i) = -\theta(u) = 1$ for~$1 \leq i \leq r-1$, where~$u = w_r$,~$N = \langle u^2, w_1, \ldots, w_{r-1} \rangle$, and~$\Gamma=\nabla_{\theta_0}^r(\Gamma_0)$ for some induced subgraph~$\Gamma_0$ with signature~$\theta_0$, with~$w_1,\ldots,w_r$ the positive tips of the~$r$ iterated cones
\end{enumerate}
(here~$\theta\colon T(\Gamma)\to\Z^\times$ denotes the unique signature of~$T(\Gamma)$).
In particular,~$N$ is unique.
\end{lemma}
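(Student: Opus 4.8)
The plan is to split the argument along the three mutually exclusive possibilities for $\overline\Gamma$: disconnected, connected and complete, and connected but not complete; the last is the substantial case. Throughout I will use that $T(\Gamma)$ is an elementary T-RAAG (\autoref{cor:elementary}), hence torsion-free, and that induced subgraphs of Droms mixed graphs are again Droms (\autoref{T-stable subgraphs}). First suppose $\overline\Gamma$ is disconnected, $\Gamma=\Gamma_1\sqcup\Gamma_2$ with both parts nonempty, so $T(\Gamma)=T(\Gamma_1)\ast T(\Gamma_2)$ is a nontrivial free product of two infinite torsion-free groups. Such a group has no nontrivial normal abelian subgroup (it acts on its Bass--Serre tree with trivial edge-stabilisers and contains a nonabelian free subgroup; alternatively this follows from the Kurosh subgroup theorem), so no subgroup $N$ as in the hypothesis exists and the statement is vacuously true. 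Hence from now on $\Gamma$ is connected: by \autoref{rem:signature graph} it carries a unique signature $\theta$, and by \autoref{prop: droms elementary type} it is a cone, so the set $W$ of \emph{dominating} vertices (those joined to every other vertex) is nonempty.

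Next suppose $\Gamma$ is complete; by \autoref{ex:complete special} it has at most one sinkhole. If it has none, then $T(\Gamma)\cong\Z^{n}$ with $n=|V|\ge 2$, so $N=T(\Gamma)$, $r=n$, and conclusion (1) holds with $\{w_1,\dots,w_n\}=V$ and $\Gamma=\nabla^{n}(\emptyset)$. If it has a sinkhole $u$ and positive vertices $v_1,\dots,v_{n-1}$, the Klein relations give $uv_iu^{-1}=v_i^{-1}$, so $T(\Gamma)=\langle v_1,\dots,v_{n-1}\rangle\rtimes\langle u\rangle\cong\Z^{n-1}\rtimes\Z$ with the generator acting by inversion. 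Then $M:=\langle v_1,\dots,v_{n-1},u^{2}\rangle\cong\Z^{n}$ is abelian and normal of index $2$, and it is the unique maximal such: projecting onto $T(\Gamma)/\langle v_1,\dots,v_{n-1}\rangle\cong\Z$, any normal abelian $N'$ must have image in $2\Z$, for an element $wu^{k}$ with $w\in\Z^{n-1}$ and $k$ odd would fail to commute with any nonzero element of $N'\cap\Z^{n-1}$, forcing $N'\cap\Z^{n-1}=1$ and hence $N'\cong\Z$, too small to be maximal as $n\ge 2$; so $N'\le M$ and by maximality $N=M$. This gives conclusion (2) with $r=n\ge 2$, $w_i=v_i$ for $i<n$, $w_r=u$, and $\Gamma=\nabla^{n}(\emptyset)$.

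The main case is $\Gamma$ connected and not complete, which I treat in four steps. \textbf{Step 1:} every $w\in W$ is positive; for if $w\in W$ were negative it would be a sinkhole (specialness), and then any two vertices $v_1,v_2\in V\smallsetminus\{w\}$ must be joined — otherwise $\{v_1,w,v_2\}$ induces $\Lambda_{\mathrm s}$, contradicting \autoref{def:droms}(iii) — which together with $w\in W$ would force $\Gamma$ to be complete. \textbf{Step 2:} $N_0:=\langle W\rangle$ is normal and $\cong\Z^{|W|}$: the vertices of $W$ are pairwise joined and positive, so $\Gamma[W]$ is complete with no directed edges and $T(\Gamma[W])\cong\Z^{|W|}$ injects by \autoref{proposition: subgraph injectivity}, while for $v\notin W$ and $w\in W$ the defining relation gives $vwv^{-1}\in\langle w\rangle$, so $N_0\triangleleft T(\Gamma)$. \textbf{Step 3:} quotienting by $N_0$ trivialises every defining relation involving a vertex of $W$ (no such vertex is the terminus of a directed edge), so $T(\Gamma)/N_0\cong T(\Gamma_0)$ with $\Gamma_0:=\Gamma[V\smallsetminus W]$; moreover $\Gamma_0$ is nonempty (else $\Gamma$ is complete) and disconnected, since if it were connected it would be a cone (or a single vertex) by \autoref{prop: droms elementary type} whose tip, being joined to all of $\Gamma_0$ and all of $W$, would be dominating in $\Gamma$ — impossible as it lies outside $W$. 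Thus $T(\Gamma_0)$ is a nontrivial free product of infinite torsion-free groups, so (as in the disconnected case) it has no nontrivial normal abelian subgroup. \textbf{Step 4:} if $N$ is any maximal abelian normal subgroup of $T(\Gamma)$, its image in $T(\Gamma)/N_0\cong T(\Gamma_0)$ is abelian and normal, hence trivial, so $N\le N_0$; since $N_0$ is itself abelian and normal, maximality gives $N=N_0$. Hence $N=\langle W\rangle\cong\Z^{|W|}$ is unique, $r=|W|$, $\theta(w)=1$ for all $w\in W$, and iterating \autoref{prop: droms elementary type} to peel the vertices of $W$ off one at a time exhibits $\Gamma$ as $\nabla^{r}_{\theta_0}(\Gamma_0)$ with the members of $W$ as successive positive tips, giving conclusion (1).

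The step I expect to be the crux is Step 3 of the last case: establishing that the quotient $T(\Gamma)/N_0$ is a genuine nontrivial free product — in particular that $\Gamma_0$ is disconnected, which hinges on the cone characterisation of Droms mixed graphs and the exclusion of $\Lambda_{\mathrm s}$ — together with the (standard but not entirely trivial) fact that a free product of two infinite torsion-free groups has no nontrivial normal abelian subgroup, which is what lets Step 4 pin $N$ down exactly and deduce uniqueness.
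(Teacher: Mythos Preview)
Your argument is correct and takes a genuinely different route from the paper's. The paper proceeds by induction on $|V|$: it peels off a single cone tip $w_1$, sets $N_1=N\cap T(\Gamma_1)$, asserts (rather tersely) that $N_1$ is again a maximal abelian normal subgroup of $T(\Gamma_1)$, and invokes the inductive hypothesis to locate $w_2,\dots,w_r$. You instead identify the whole set $W$ of dominating vertices at once, show directly that $N_0=\langle W\rangle$ is abelian normal with quotient $T(\Gamma)/N_0\cong T(\Gamma_0)$ a nontrivial free product of infinite torsion-free groups, and conclude $N=N_0$ in one stroke via Step~4. Your approach is cleaner and sidesteps the slightly delicate claim that $N\cap T(\Gamma_1)$ is \emph{maximal} in $T(\Gamma_1)$; the paper's induction, on the other hand, makes the iterated-cone description $\Gamma=\nabla^r(\Gamma_0)$ fall out automatically rather than requiring your final ``peel off $W$ one vertex at a time'' remark.

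One small gap to patch in the complete-with-sinkhole case: from ``$N'\cong\Z$'' you jump to ``too small to be maximal'', but you have not exhibited a strictly larger abelian normal subgroup \emph{containing this particular} $N'$ --- you cannot simply point at $M$, since $N'\le M$ is exactly what you are trying to prove. The fix is to use normality more directly: if $g=wu^{k}\in N'$ with $k$ odd, conjugating by each $v_i$ shows $(2e_i)\in N'\cap\Z^{n-1}$, which for $n\ge 2$ contradicts $N'\cap\Z^{n-1}=1$; hence no such $g$ exists and $N'\le M$. Also, writing $\Gamma=\nabla^{n}(\emptyset)$ in that case is off, since cone tips are always positive; the correct description is $\Gamma=\nabla^{\,n-1}(\{u\})$ with $\theta_0(u)=-1$.
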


\begin{proof}
If~$\Gamma$ is disconnected, then~$T(\Gamma)$ is a non-trivial free product, which has no non-zero abelian normal subgroup by the Kurosh theorem. Hence,~$\Gamma$ is connected. In particular,~$T(\Gamma)$ has a unique signature (cf.~\autoref{rem:signature graph}).

If~$\Gamma$ has only two vertices, the result clearly holds. Suppose now that~$\Gamma$ has at least three vertices. Since~$\Gamma$ is connected, it must be a cone, say~$\Gamma = \nabla_{\bar\theta}(\Gamma_1)$ with positive tip~$w_1$, for some signature~$\bar\theta\colon T(\Gamma_1)\to\Z^\times$ (cf.~\autoref{prop: droms elementary type}).

The intersection~$N_1 = N \cap T(\Gamma_1)$ is a maximal abelian normal subgroup of~$T(\Gamma_1)$, and hence, by induction, there are pairwise distinct central vertices~$w_2, \ldots, w_{r_1}$ in~$\Gamma_1$. Again, two cases may occur:
\begin{enumerate}
    \item[(1)] The vertices~$w_2, \ldots, w_{r_1}$ are positive and~$N_1 = \langle w_2, \ldots, w_{r_1} \rangle$. Since
 ~$$[w_1, w_i] = 1,\qquad\text{for } 2 \leq i \leq r_1,$$
    and~$\langle w_1 \rangle$ is a normal subgroup with quotient~$T(\Gamma)/\langle w_1 \rangle = T(\Gamma_1)$, we deduce~$N = \langle w_1, \ldots, w_{r_1} \rangle$ and hence~$r_1 = r$.
    \item[(2)] One of the vertices, say~$u = w_{r_1}$, is negative, and~$N = \langle u^2, w_1, \ldots, w_{r_1 - 1} \rangle$, implying~$r_1 = r - 1$.
\end{enumerate}

Case (2) only occurs when~$\Gamma$ is complete and has a single negative vertex.
\end{proof}

\begin{proposition}
Let~$\Gamma = (V, E, D, o, t)$ be a Droms mixed graph. Then,~$\Gamma$ is not rigid if, and only if,~$\Gamma$ has a satellite.
\end{proposition}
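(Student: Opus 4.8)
The statement is an equivalence; one implication, namely that a Droms mixed graph with a satellite is not rigid, is already \autoref{prop:iso 1} (a Droms mixed graph is special). So the work is the converse: \emph{a Droms mixed graph with no satellites is rigid}, which I would prove by induction on $n=|V|$, using the recursive description of Droms mixed graphs in \autoref{prop: droms elementary type}. The cases $n\le 2$ are handled directly: $T(\Gamma)$ is then one of $1,\ \Z,\ F_2,\ \Z^2,\ K$, and each of these determines $\Gamma$ up to mixed-graph isomorphism (by \autoref{rem:no of tails}, an abelian or free $T(\Gamma')$ forces $\Gamma'$ to have no directed edges and then \autoref{rem:simplicial rigidity} applies, while $K^{ab}\cong\Z\times\Z/2\Z$ forces a single directed edge on two vertices); and there are no satellites on $\le 2$ vertices, so these cases are vacuously compatible with the claim.

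For the inductive step, fix a Droms mixed graph $\Gamma$ with no satellites and an isomorphism $\phi\colon T(\Gamma)\xrightarrow{\ \sim\ }T(\Gamma')$; by \autoref{rem:droms iso subgroups}, $\Gamma'$ is again a Droms mixed graph. Suppose first $\Gamma$ is disconnected, so $T(\Gamma)$ is a nontrivial free product. I would first record that, for a nonempty Droms mixed graph $\Lambda$, the group $T(\Lambda)$ is infinite, and is freely indecomposable iff $\Lambda$ is connected: if $\Lambda$ is connected it is a cone, so $T(\Lambda)=\langle w\rangle\rtimes T(\Lambda_0)$ has a nontrivial infinite cyclic normal subgroup $\langle w\rangle$, and (by \autoref{proposition: subgraph injectivity}, looking at the induced subgraph on $w$ and one further vertex when $|V(\Lambda)|\ge 2$) $T(\Lambda)$ is not infinite cyclic since it contains a copy of $\Z^2$ or of $K$; whereas a nontrivial free product of the infinite groups arising here has no infinite cyclic normal subgroup. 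Writing $\Gamma$ and $\Gamma'$ as disjoint unions of their connected components and invoking the uniqueness of free-product decompositions into freely indecomposable factors, the numbers of single-vertex components of $\Gamma$ and $\Gamma'$ agree and the multisets $\{T(\Gamma_i)\}$ over the components $\Gamma_i$ with $\ge 2$ vertices agree up to isomorphism. Each such $\Gamma_i$ is Droms (\autoref{T-stable subgraphs}), has fewer than $n$ vertices, and has no satellites — a satellite of a component is a satellite of $\Gamma$, since no edge joins distinct components, which is a direct check of the three clauses of \autoref{defin:satellite} — hence is rigid by induction, hence mixed-graph isomorphic to its partner in $\Gamma'$. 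Therefore $\Gamma\cong\Gamma'$.

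Now suppose $\Gamma$ is connected, so $n\ge 3$ and $\Gamma$ is a cone; then $\langle w\rangle$ (for the tip $w$) is a nontrivial abelian normal subgroup and, by \autoref{lemma:normal abelian subgroup}, $T(\Gamma)$ has a unique maximal abelian normal subgroup $N\cong\Z^r$ with $r\ge 1$. If $\Gamma$ has a sinkhole joined to every other vertex, it is rigid by \autoref{prop:iso 2}; so assume not, which places $\Gamma$ in case (1) of \autoref{lemma:normal abelian subgroup}: $\Gamma=\nabla^r_{\theta_0}(\Gamma_0)$ with $N=\langle w_1,\dots,w_r\rangle$ generated by positive dominating vertices, and $T(\Gamma)/N\cong T(\Gamma_0)$ (kill the tips in the presentation). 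Here $\Gamma_0$ is Droms, has $n-r<n$ vertices, and has no satellites (again by the clauses of \autoref{defin:satellite}: the edges among vertices of $\Gamma_0$ are unchanged in $\Gamma$, and each $w_i$ is a common neighbour of every pair of vertices of $\Gamma_0$). Since $T(\Gamma')\cong T(\Gamma)$ and $\Gamma'$ is Droms, $\Gamma'$ is likewise connected with unique maximal abelian normal subgroup $N'\cong\Z^{r'}$; $\phi(N)=N'$ forces $r'=r$, and $\Gamma'$ cannot lie in case (2) of the lemma, since that would give $T(\Gamma')/N'\cong\Z/2\Z$ while $T(\Gamma)/N\cong T(\Gamma_0)$ is infinite (if $\Gamma_0\ne\emptyset$) or $T(\Gamma)$ is abelian whereas $T(\Gamma')$ is not (if $\Gamma_0=\emptyset$). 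Thus $\Gamma'=\nabla^r_{\theta'_0}(\Gamma'_0)$ with $T(\Gamma'_0)\cong T(\Gamma')/N'$, and $\phi$ descends to an isomorphism $T(\Gamma_0)\cong T(\Gamma'_0)$. If $\Gamma_0=\emptyset$ then $\Gamma=K_r$ is simplicial, $T(\Gamma')\cong\Z^r$ forces $\Gamma'$ simplicial, and \autoref{rem:simplicial rigidity} gives $\Gamma'\cong\Gamma$; otherwise the inductive hypothesis applies to $\Gamma_0$, yielding $\Gamma_0\cong\Gamma'_0$, and since the coning operation $\nabla^r$ depends only on which vertices of the base are negative — data preserved by any mixed-graph isomorphism, cf.\ \autoref{def:cone} — this isomorphism extends to $\Gamma=\nabla^r(\Gamma_0)\cong\nabla^r(\Gamma'_0)=\Gamma'$. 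This closes the induction.

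The main obstacle is the connected case, specifically making the reduction to $\Gamma_0$ airtight: one must be sure that $\phi$ is compatible with the cone structure — that it carries $N$ onto $N'$ (which is precisely where the uniqueness clause of \autoref{lemma:normal abelian subgroup} is essential), that the quotient by $N$ genuinely recovers $T(\Gamma_0)$, and that $\Gamma'$ sits in the same case of that lemma as $\Gamma$ — together with the routine-but-necessary combinatorial verification that $\Gamma_0$ (and, in the disconnected case, the components) inherit the ``no satellites'' property, so that the inductive hypothesis is legitimately available on a strictly smaller Droms graph. Once the free-indecomposability dichotomy for $T(\Lambda)$ is pinned down, the disconnected case is comparatively routine.
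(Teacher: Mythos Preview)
Your overall strategy matches the paper's: reduce to the connected case, invoke \autoref{lemma:normal abelian subgroup} to write $\Gamma=\nabla^r_{\theta_0}(\Gamma_0)$ and $\Gamma'=\nabla^r_{\theta'_0}(\Gamma'_0)$, pass to the quotients $T(\Gamma)/N\cong T(\Gamma_0)$ and $T(\Gamma')/N'\cong T(\Gamma'_0)$, and apply induction to get a mixed-graph isomorphism $\Gamma_0\cong\Gamma'_0$. Your treatment of the disconnected case is in fact more explicit than the paper's.

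The genuine gap is your final sentence in the connected case. You write that ``the coning operation $\nabla^r$ depends only on which vertices of the base are negative --- data preserved by any mixed-graph isomorphism'', and conclude $\Gamma\cong\Gamma'$ from $\Gamma_0\cong\Gamma'_0$. This is false as stated: the cone $\nabla^r_{\theta_0}(\Gamma_0)$ depends on the \emph{signature} $\theta_0$, and by \autoref{rem:signature graph} a signature is \emph{not} determined by the mixed-graph structure of $\Gamma_0$ at isolated vertices of $\Gamma_0$. (The literal wording of \autoref{def:cone} is misleading here; the examples and \autoref{prop: droms elementary type} make clear that the new directed edges go to vertices with $\theta_0=-1$, which for an isolated vertex is a free choice.) So knowing $\Gamma_0\cong\Gamma'_0$ does not by itself force $\theta_0$ and $\theta'_0$ to correspond, and hence does not force $\Gamma\cong\Gamma'$.

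This is exactly the point where the ``no satellites'' hypothesis does real work, and it is exactly what the paper's proof supplies with its closing case analysis. In your contrapositive language, one needs to argue: if $\Gamma$ has no satellites and has a sinkhole, then any isolated vertex $u$ of $\Gamma_0$ would become a satellite in $\Gamma$ of some sinkhole of $\Gamma$ (since the only neighbours of $u$ in $\Gamma$ are $w_1,\dots,w_r$, which are neighbours of every vertex of $\Gamma_0$), so in fact $\Gamma_0$ has at most one isolated vertex and its signature value is then pinned down by comparing $T(\Gamma)^{ab}\cong T(\Gamma')^{ab}$ via \autoref{rem:no of tails}; and if $\Gamma$ has no sinkhole at all, then $\Gamma$ is simplicial and \autoref{rem:simplicial rigidity} finishes directly. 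Without this step, the induction does not close.
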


\begin{proof}
If~$\Gamma$ has a satellite, then it is not rigid by ~\autoref{thm:isomorphism}--(i) (cf.~\autoref{prop:iso 1}).

For the converse implication we argue by induction on the number of vertices~$|V|$ of~$\Gamma$.
Also, by Droms' result we may assume that~$\Gamma$ has at least a sinkhole (cf.~\autoref{rem:droms iso subgroups}), and without loss of generality we may assume that~$T(\Gamma)$ does not decompose as non-trivial free product -- i.e.,~$\Gamma$ is connected.
So, suppose~$\Gamma$ is not rigid, and~$|V|\geq3$, as for~$|V|<3$ there are no satellites, and the implication is trivial. Then~$\Gamma$ is not complete  by~\autoref{thm:isomorphism}--(ii) (cf.~\autoref{prop:iso 2}).

If~$|V| = 3$ and~$\Gamma$ is not complete, then~$\Gamma$ can only be either
\[
\bullet \leftarrow \bullet \rightarrow \bullet, \quad \text{or} \quad \bullet - \bullet \rightarrow \bullet
\]
and both mixed graphs have satellites.

Now assume~$|V| \geq 4$. Since~$\Gamma$ is connected,~$T(\Gamma)$ admits a unique signature~$\theta : T(\Gamma) \to \mathbb{Z}^\times$ (cf.~\autoref{rem:signature graph}).
By~\autoref{lemma:normal abelian subgroup}, the maximal abelian normal subgroup~$N$ is generated by a set~$\{w_1,\ldots,w_r\}$ of central vertices, all positive. Moreover,~$\Gamma = \nabla^r_{\theta_0}(\Gamma_0)$ for some induced subgraph~$\Gamma_0$ -- which is again a Droms mixed graph --, and~$T(\Gamma)/N\simeq T(\Gamma_0)$.

If~$\Gamma_0$ is not rigid, then by induction~$\Gamma_0$ has a satellite, and we are done.

If~$\Gamma_0$ is rigid, then let~$\Gamma'=(V',E',D',o',t')$ be a mixed graph, not isomorphic to~$\Gamma$, such that~$T(\Gamma)\simeq T(\Gamma')$.
By~\autoref{rem:droms iso subgroups}, also~$\Gamma'$ is a Droms mixed graph.
Thus, we may apply~\autoref{lemma:normal abelian subgroup} also to~$T(\Gamma')$: namely,~$\Gamma'=\nabla_{\theta'_0}^r(\Gamma_0')$ for some induced subgraph~$\Gamma_0'$ of~$\Gamma'$, and the positive tips~$w_1',\ldots,w_r'\in V'$ of the~$r$ recursive cones generate the abelian normal subgroup~$N$.
In particular,
$$T(\Gamma_0')\simeq T(\Gamma')/N\simeq T(\Gamma)/N\simeq T(\Gamma_0).$$
Since~$\Gamma_0$ is rigid, the two mixed graphs~$\Gamma_0$ and~$\Gamma_0'$ are isomorphic -- so, we may identify them, yet the two signatures~$\theta_0,\theta_0'\colon V(\Gamma_0)\to\Z^\times$ may not coincide.
Moreover, every vertex of~$\Gamma_0$ is joined to every positive vertex~$w_1,\ldots,w_r$, in~$\Gamma$, and analogously to every positive vertex~$w'_1,\ldots,w'_r$, in~$\Gamma'$.
Altogether, one has
\[
 V=V(\Gamma_0)\cup\{w_1,\ldots,w_r\}\simeq V(\Gamma_0)\cup\{w'_1,\ldots,w'_r\}=V'
\]
and
\[
 \begin{split}
  E&=E(\Gamma_0)\cup\{\:\{w_1,u\},\ldots,\{w_r,u\}\:\mid\:u\in V(\Gamma_0)\:\} \\ &\simeq E(\Gamma_0)\cup\{\:\{w'_1,u\},\ldots,\{w'_r,u\}\:\mid\:u\in V(\Gamma_0)\:\}=E',
 \end{split}
\] hence one must have~$D\not\simeq D'$.
Since
\[
\begin{split}
D &=D(\Gamma_0)\cup\{\:[w_1,u\rangle,\ldots,[w_r,u\rangle\:\mid\:u\in V(\Gamma_0)\text{ and }\theta_0(u)=-1\:\},\\
D' &=D(\Gamma_0)\cup\{\:[w'_1,u\rangle,\ldots,[w'_r,u\rangle\:\mid\:u\in V(\Gamma_0)\text{ and }\theta'_0(u)=-1\:\},
\end{split}\]
one has~$D\not\simeq D'$ if, and only if,~$\theta_0(u)\neq\theta_0'(u)$ for some~$u\in V(\Gamma_0)$.
This is possible only if~$u$ is an isolated vertex of~$\Gamma_0$ (cf.~\autoref{rem:signature graph}).
So, pick such a vertex~$u$. One has three cases.
\begin{itemize}
 \item[(1)] If~$\theta_0(u)=1$, then~$u$ is a positive vertex of~$\Gamma$.
Since~$w_1,\ldots,w_r$ are positive vertices as well, and since we are assuming that~$\Gamma$ has at least a sinkhole, such a sinkhole -- say,~$v$ -- must be a vertex of~$\Gamma_0$. Since the only vertices of~$\Gamma$ which are joined to~$u$ are~$w_1,\ldots,w_r$, which are joined to the sinkhole~$v$ too,~$u$ is a satellite of~$v$ in~$\Gamma$.
\item[(2)] If~$\theta_0(u)=-1$ and~$\theta_0(v)=-1$ for some other vertex~$v$ of~$\Gamma_0$, then~$u$ is a satellite of~$v$ in~$\Gamma$, as all vertices that are joined to~$u$ -- i.e.,~$w_1,\ldots,w_r$ -- are joined to~$v$ too.
\item[(3)] If~$\theta_0(u)=-1$ and~$\theta_0(v)=1$ for all~$v\in V(\Gamma_0)$,~$v\neq u$, then we claim that at least one of such positive vertices -- say,~$v_0$ -- is isolated in~$\Gamma_0$, so that~$v_0$ is a satellite of~$u$ in~$\Gamma$.
Indeed, if none of such vertices is isolated, then necessarily one has~$\theta_0'(v)=1=\theta_0'(u)$ for all~$v\in V(\Gamma_0)\smallsetminus\{u\}$ (cf.~\autoref{rem:signature graph}). Therefore,~$\Gamma'$ has no directed edges -- namely, it is a simplicial graph --, but then~$\Gamma$ is rigid by Droms (cf.~\autoref{rem:simplicial rigidity}), a contradiction.
\end{itemize}
In every case, the Droms mixed graph~$\Gamma$ has a satellite, and this completes the proof.
\end{proof}

\begin{example}
 The Droms mixed graph~$\Gamma_3$ displayed in \autoref{ex:gamma3 droms} is not rigid, as the vertex~$a_2$ is a satellite of the vertex~$a_1$, and conversely; the T-RAAG~$T(\Gamma_3)$ is isomorphic to the T-RAAG~$T(\Gamma_3')$ based on the Droms mixed graph
 \begin{figure}[H]
\centering
\begin{tikzpicture}[>={Straight Barb[length=7pt,width=6pt]},thick, scale =0.75]

\draw[] (-5.8, 0) node[left] {$\Gamma'_3 =~$};
\draw[fill=black] (-5,1) circle (1pt) node[left] {$a_1$};
\draw[fill=black] (-3,1) circle (1pt) node[right] {$b_1$};
\draw[fill=black] (-3,-1) circle (1pt) node[right] {$a'_2$};
\draw[fill=black] (-5,-1) circle (1pt) node[left] {$b_2$};
\draw[] (-2.5, 0) node[right] {$,$};

\draw[thick, ->] (-3.1, 1) -- (-4.9, 1);
\draw[thick] (-3,0.9) -- (-3,-0.9);
\draw[thick] (-4.9,-0.9) -- (-3.1,0.9);
\draw[thick, ->] (-5,-0.9) -- (-5,0.9);
\draw[thick] (-4.9,-1) -- (-3.1,-1);
\end{tikzpicture}
\end{figure}
which is not isomorphic to~$\Gamma_3$.

 The Droms graph~$\Upsilon$ displayed in \autoref{ex:upsilon droms} is rigid, as it has no satellites: indeed, the only sinkhole is~$a_1$, and the vertices with distance 2 to~$a_1$ are the positive vertices~$a_2,b_2$, which are joined together, so that none of them is a satellite of~$a_1$.
\end{example}

\noindent\textit{\\ Islam Foniqi,\\
The University of East Anglia\\ 
Norwich (United Kingdom)\\}
{email: i.foniqi@uea.ac.uk}

\noindent\textit{\\ Claudio Quadrelli,\\
University of Insubria\\
Como (Italy EU)\\}
{email: claudio.quadrelli@uninsubria.it}

\setlength{\bibsep}{7pt}
\bibliography{main}


\end{document}